\newtheorem{theorem}{Theorem}[section]
\newtheorem{lemma}{Lemma}[section]
\newtheorem{proposition}{Proposition}[section]
\newtheorem{remark}{Remark}[section]
\newtheorem{corollary}{Corollary}[section]
\def \C {\mathbb{C}}
\def \R {\mathbb{R}}
\def \Z {\mathbb{Z}}
\def \N {\mathbb{N}}
\begin{document}

\title{Conformally Invariant Dirac Equation with Non-Local Nonlinearity}

\author{ Ali Maalaoui$^{(1)(2)}$ \& Vittorio Martino$^{(3)}$  \& Lamine Mbarki$^{(4)}$}
\addtocounter{footnote}{1}
\footnotetext{Department of Mathematics, Clark University, 950 Main Street, Worcester, MA 01610, USA. E-mail address: 
{\tt{amaalaoui@clarku.edu}}}
\addtocounter{footnote}{1}
\footnotetext{Department of Mathematics, MIT, 77 Massachusetts Avenue
Cambridge, MA 02139-4307. E-mail address: 
{\tt{maala650@mit.edu}}}

\addtocounter{footnote}{1}
\footnotetext{Dipartimento di Matematica, Alma Mater Studiorum - Universit\`a di Bologna. E-mail address:
{\tt{vittorio.martino3@unibo.it}}}
\addtocounter{footnote}{1}
\footnotetext{Department of Mathematics, Faculty of Sciences Tunis, University of Tunis el Manar, Tunis,
 Tunisia. E-mail address:
{\tt{mbarki.lamine2016@gmail.com}; \tt{lamine.mbarki@fst.utm.tn}}}

\date{}
\maketitle

\vspace{5mm}

{\noindent\bf Abstract}  {\small We study a conformally invariant equation involving the Dirac operator and a non-linearity of convolution type. This non-linearity is inspired from the conformal Einstein-Dirac problem in dimension 4. We first investigate the compactness, bubbling and energy quantization of the associated energy functional then we characterize the ground state solutions of the problem on the standard sphere. As a consequence, we prove an Aubin-type inequality that assures the existence of solutions to our problem and in particular the conformal Einstein-Dirac problem in dimension 4. Moreover, we investigate the effect of a linear perturbation to our problem, leading us to a Brezis-Nirenberg type result.}

\vspace{5mm}

\noindent
{\small Keywords: Dirac operator, Convolution non-linearity, Conformal invariance, Brezis-Nirenberg }

\vspace{4mm}

\noindent
{\small 2010 MSC. Primary: 53C18; 53C27.  Secondary: 58J55; 58J60.}

\vspace{4mm}

%%%%%%%%%%%%%%%%%%%%%%%%%%%%%%%%%%%%%%%%%%%%%%%%%%%%%%%%%%%%%%%%%%%%%%%%%%%%%%%%%%%%%%%%%%%%%%%%%%%%%%%%%%%%%%%%%%%%%%%%%%%%%%%%%%%%
%%%%%%%%%%%%%%%%%%%%%%%%%%%%%%%%%%%%%%%%%%%%%%%%%%%%%%%%%%%%%%%%%%%%%%%%%%%%%%%%%%%%%%%%%%%%%%%%%%%%%%%%%%%%%%%%%%%%%%%%%%%%%%%%%%%%
%%%%%%%%%%%%%%%%%%%%%%%%%%%%%%%%%%%%%%%%%%%%%%%%%%%%%%%%%%%%%%%%%%%%%%%%%%%%%%%%%%%%%%%%%%%%%%%%%%%%%%%%%%%%%%%%%%%%%%%%%%%%%%%%%%%%
%%%%%%%%%%%%%%%%%%%%%%%%%%%%%%%%%%%%%%%%%%%%%%%%%%%%%%%%%%%%%%%%%%%%%%%%%%%%%%%%%%%%%%%%%%%%%%%%%%%%%%%%%%%%%%%%%%%%%%%%%%%%%%%%%%%%

\section{Introduction and motivation}
Let $M$ be a closed (compact, without boundary) manifold of dimension $n\geq 3$, endowed with a fixed Riemannian metric $g$ and a spin structure $\Sigma_g M$. Let $D_g$ be the Dirac operator acting on spinors $\psi\in \Sigma_g M$. Let us introduce the Einstein-Dirac functional 
\begin{equation}\label{Dirac-Einstein functional} 
\mathcal{E}(g,\psi)=\int_{M}R_{g} + \langle D_{g}\psi,\psi\rangle - \lambda|\psi|^{2} \;dv_{g} 
\end{equation}
where $R_g$ is the Scalar curvature of the metric $g$ and $\lambda$ is a real parameter. Critical points of $\mathcal{E}$ are solutions of the Einstein-Dirac equations (see for instance \cite{MaalaouiMartino2022})
\begin{equation}\label{Dirac-Einstein equations}
\left\{\begin{array}{ll}
Ric_{g}-\displaystyle\frac{R_{g}}{2}g=T_{g,\psi}\\
\\
D_{g}\psi=\lambda \psi
\end{array}
\right.
\end{equation}
where $Ric_{g}$ is the Ricci tensor and $T_g$ is the stress–energy tensor given by
$$T_{g,\psi}(X,Y)=\displaystyle -\frac{1}{4}\langle X\cdot\nabla_Y \psi + Y\cdot\nabla_X \psi, \psi \rangle, \quad X,Y \in TM \; ,$$
here $\cdot$ and $\nabla$ denote the Clifford multiplication and the connection on  $\Sigma_g M$ (see \cite{KimFriedrich2000}, \cite{Friedrich2000}). This functional was investigated in dimensions 3 and 4 in \cite{MaalaouiMartino2022}, where the authors study the limits of such structures under natural bounds on the diameter and the curvature. We also mention that the first equation in $(\ref{Dirac-Einstein equations})$ is similar in structure to the semi-classical gravity model coupling gravity with matter in a way that only the matter fields are quantified, we refer to \cite{BGDB} for more details about the model. Now, if we restrict the variations of the metric to a given conformal class, that is $\tilde{g}=u^{\frac{4}{n-2}}g$ and $\tilde{\psi}=u^{\frac{1-n}{n-2}}\psi \in \Sigma_{\tilde{g}}M$, we obtain the following functional

\begin{equation}\label{Dirac-Einstein functional conformal}
\mathcal{E}\big( \widetilde{g},\widetilde{\psi} \big)=\int_{M} u L_{g}u + \langle D_{g}\psi, \psi \rangle - \lambda u^{\frac{2}{n-2}} |\psi|^{2} \;dv_{g}=:\mathcal{E}_g(u, \psi) \;,
\end{equation}  
where $L_{g}$, here is the conformal Laplacian. The critical points of this functional solve the conformal Einstein-Dirac equations
\begin{equation}\label{DE4}
\left\{\begin{array}{ll}
L_{g} u=\displaystyle\frac{\lambda}{n-2}|\psi|^{2} u^{\frac{4-n}{n-2}}\\
\\
D_{g}\psi=\lambda u^{\frac{2}{n-2}} \psi
\end{array}.
\right.
\end{equation}
The case $n=3$ was investigated in \cite{BorrelliMaalaouiMartino2023}, \cite{GuidiMaalaouiMartino2021}, \cite{MaalaouiMartino2019} and the case $n=2$ corresponds to the super-Liouville problem investigated in \cite{JMW1,JMW2, JWZZ}. We also mention the recent work of Sire and Xu \cite{YX} where the authors adopt a flow approach to investigate the problem. We notice that in dimension $n=4$, the system $(\ref{Dirac-Einstein functional conformal})$ takes a more approachable structure. That is, one can solve the first equation, finding $u$ in terms of $|\psi|^{2}$ by using the Green's function of the conformal Laplacian, then inserting it in the second equation one has a single equation that can be written as
$$D_{g}\psi=\Big(\int_{M}G(x,y)|\psi|^{2}(y)\ dv_{g}(y)\Big) \psi,$$
where $G$ is the Green's function of the conformal Laplacian $L_{g}$. Due to the singularity of the Green's function ($G(x,y)\sim \frac{1}{|x-y|^{2}}$, when $x$ is close to $y$), one can see the similarities with other classical equations in the literature. It is in fact surprising how this type of equations appears naturally in different models in physics. For instance, based on the work in \cite{GG}, the Schrodinger-Newton model can be derived from the Einstein-Dirac model through a non-relativistic limit and we recall here that the Schrodinger-Newton equation in $\R^3$ takes the form
$$i\frac{\partial \psi }{\partial t}=-\Delta \psi -c\Big(\int \frac{|\psi(t,y)|^{2}}{|x-y|}\ dy \Big) \psi,$$
where we see clearly the convolution term that appears in the non-linearity. Notice that the static solutions correspond to a version of the Choquard equations. Hence, our problem can be seen as a spinorial version of the Choquard equation, we refer the reader to the survey \cite{MS} and the references therein. But also, this equation is similar to the semi-classical Hartree's equation and the Lieb-Yau conjecture for the pseudo-relativistic Boson stars model \cite{Lieb1,Lieb2,Enno}. Another important model where such equation appears is the Dirac-Maxwell system studied in \cite{DWX} (see also the references therein).

\noindent
In this work, we propose to study a general problem with the same structure and conformal invariance properties that would capture the solutions to the conformal Einstein-Dirac equation in dimension 4. We then consider the following equation
\begin{equation}\label{eq}
D_g\psi=(G_g^{s}*|\psi|^{2})\psi,
\end{equation}
and its linear perturbation
\begin{equation}\label{eq2}
D_{g}\psi=\lambda \psi +(G_g^{s}*|\psi|^{2})\psi,
\end{equation}
where we denoted by
$$(G_g^{s}*f)(x):=\int_{M}G_g^{s}(x,y)f(y) \; dv_{g}(y) \; $$
the convolution of a given function $f$ with the Green's function $G_g^{s}$ of the conformal fractional Laplacian  $P_{g}^{s}$, of order $2s=n-2$.\\
These equations have a variational structure and the corresponding energy functional for $(\ref{eq2})$ is given by 
\begin{align}\label{energyfunctional}
J_{g,\lambda}(\psi) & = \frac{1}{2}\int_{M}\langle D_g\psi,\psi\rangle -\lambda |\psi|^{2}\ dv_{g}-\frac{1}{4}\int_{M} (G_g^{s}*|\psi|^{2})|\psi|^{2}\; dv_{g} \notag \\
& = \frac{1}{2}\int_{M}\langle D_g\psi,\psi\rangle -\lambda |\psi|^{2}\ dv_{g}-\frac{1}{4}\int_{M\times M} G_g^{s}(x,y)|\psi(y)|^{2} \; |\psi(x)|^{2}\; dv_{g}(y)\; dv_{g}(x) \;,
\end{align}
where $\langle \cdot ,\cdot\rangle$ is the canonical Hermitian metric defined on  $\Sigma_g M$. Notice that $J_{g,0}=:J_{g}$ is the energy functional corresponding to $(\ref{eq})$.\\
We see that, the particular choice of the parameter $s$ makes the functional $J_{g}$ invariant under a conformal change of the metric; in order to see this, for any $s$ for which the conformal fractional Laplacian is defined (see \cite{Mar, Chang-Gonzalez}), let us consider a conformal change of the metric
\begin{equation}\label{changemetric}
  \widetilde{g} = u^{\frac{4}{n-2s}}g,  \; 0<u\in C^{\infty}(M) \; .
\end{equation}
Given a spinor $\psi \in \Sigma_{g} M$, we set 
$$\widetilde{\psi} = u^{\frac{1-n}{n-2s}} \psi \in \Sigma_{\widetilde{g}} M \; ,$$
where we implicitly understand the action of a canonical isometric isomorphism between the spinor bundles $\Sigma_{\widetilde{g}} M$ and $\Sigma_{g} M$ (see \cite{KimFriedrich2000}, Section 2). In this way, we have the conformal change of the Dirac operator
$$
D_{ \widetilde{g}}\widetilde{\psi}  = u^{-\frac{n+1}{n-2s}} D_{g} \psi \; .
$$
Also, by using the conformal covariance property of the fractional Laplacian
$$P_{\widetilde{g}}^{s}(f)= u^{-\frac{n+2s}{n-2s}} P_{g}^{s}(uf) \; ,$$
we obtain the  conformal change of its Green's function
$$
G_{\widetilde{g}}^s(x,y)=u(x)^{-1} u(y)^{-1}G_{g}^s(x,y) \; .
$$
Now, taking into account the change of the volume
$$dv_{\widetilde{g}}  = u^{\frac{2n}{n-2s}}  dv_{g} \; ,$$
we substitute in (\ref{eq}) and find
\begin{align*}
J_{\widetilde{g}}(\widetilde{\psi}) & = \frac{1}{2}\int_{M}\langle D_{\widetilde{g}}\widetilde{\psi},\widetilde{\psi}\rangle \ dv_{\widetilde{g}}-\frac{1}{4}\int_{M\times M} G_{\widetilde{g}}^{s}(x,y)|\widetilde{\psi}(y)|^{2} \; |\widetilde{\psi}(x)|^{2}\; dv_{\widetilde{g}}(y)\; dv_{\widetilde{g}}(x) \;,\\
& = \frac{1}{2}\int_{M}\langle D_{g}\psi,\psi\rangle \ dv_{g}-\frac{1}{4}\int_{M\times M} G_g^{s}(x,y)|\psi(y)|^{2} \; |\psi(x)|^{2} \;  u^{\frac{4+4s-2n}{n-2s}} \; dv_{g}(y)\; dv_{g}(x) \; .
\end{align*}
Therefore, if $2s=n-2$ we obtain $J_{\widetilde{g}}(\widetilde{\psi})=J_{g}(\psi)$. In particular, this says that equation (\ref{eq}) is critical, in the sense of the conformal analysis.

\noindent
This manuscript is mainly split in two parts. In the first part, we investigate the lack of compactness of the problem, due to the conformal invariance and we prove the following bubbling and energy quantization result for the functional $J_{g}$.
\begin{theorem}\label{thmbubble} 
Let us assume that $(M,[g])$ has a positive Yamabe constant $Y_s(M,[g])$ and let $(\psi_{k})_{k\in \N}$ be a Palais-Smale sequence for $J_{g}$ at level $c\geq 0$, where $Y_s(M,[g])$ is the s-Yamabe constant, which we define in the next section.. Then there exist $\psi_{\infty}\in C^{\infty}(M,\Sigma_{g} M)$, a solution of $(\ref{eq})$, $m$ sequences of points $x_{k}^{1},\cdots, x_{k}^{m} \in M$ such that $\lim_{k\to \infty}x_{k}^{j}= x^{j}\in M$, for $j=1,\dots,m$ and $m$ sequences of real numbers $R_{k}^{1},\cdots, R_{k}^{m}$ converging to zero, such that:
\begin{itemize}
\item[ii)]  $\displaystyle \psi_{k}=\psi_{\infty}+\sum_{j=1}^{m}\phi_{k}^{j}+o(1)$ in $H^{\frac{1}{2}}(\Sigma M)$,
\item[iii)] $\displaystyle J_{g}(\psi_{k})=J_{g}(\psi_{\infty})+ 
    \sum_{j=1}^{m}J_{g_{\mathbb{R}^{n}}}(\Psi_\infty^{j})+o(1)$,
\end{itemize}
where
$$\phi_{k}^{j}=(R_{k}^{j})^{-1}\beta_{j}\sigma_{k,j}^{*}(\Psi_\infty^{j}) ,$$
with $\sigma_{k,j}=(\rho_{k,j})^{-1}$ and $\rho_{k,j}(\cdot)=exp_{x_{k}^j}(R_{k}^j \cdot)$ is the exponential map defined in a suitable neighborhood of $\R^{n}$.
Also, here $\beta_{j}$ is a smooth compactly supported function, such that $\beta_{j}=1$ on $B_{1}(x^{j})$ and $supp(\beta_{j})\subset B_{2}(x^{j})$ and $\Psi_\infty^{j}$ is the solution to our equations (\ref{eq}) on $\mathbb{R}^n$ with its Euclidian metric $g_{\R^n}$.
\end{theorem}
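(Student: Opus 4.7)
The plan is to follow the classical Struwe-type bubble decomposition, adapted to the Dirac operator with a critical nonlocal nonlinearity.

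First I would establish boundedness of $(\psi_k)$ in $H^{\frac{1}{2}}(\Sigma M)$. Combining $J_g(\psi_k)=c+o(1)$ with $dJ_g(\psi_k)[\psi_k]=o(\|\psi_k\|_{H^{1/2}})$ yields
\[
\tfrac14\int_M (G_g^{s}\ast|\psi_k|^2)|\psi_k|^2\,dv_g = c + o(1) + o(\|\psi_k\|_{H^{1/2}}).
\]
Positivity of the $s$-Yamabe constant implies $G_g^s>0$, so the left-hand side is nonnegative. Writing $\psi_k=\psi_k^+ + \psi_k^-$ with respect to the spectral decomposition of $D_g$ and testing $dJ_g(\psi_k)$ against $\psi_k^+ - \psi_k^-$, the spectral gap of $D_g$ on $(\ker D_g)^\perp$ controls $\||D_g|^{1/2}\psi_k\|_{L^2}^2$ by the nonlocal quartic term, which together with a Hardy--Littlewood--Sobolev estimate yields boundedness. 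Passing to a subsequence, I obtain a weak limit $\psi_\infty$; by compact subcritical embeddings and continuity of $\psi\mapsto (G_g^s\ast|\psi|^2)\psi$ on bounded sets, $\psi_\infty$ solves (\ref{eq}), and elliptic regularity promotes it to $C^\infty$.

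Next, set $\varphi_k:=\psi_k-\psi_\infty$. A key ingredient is a nonlocal Brezis--Lieb type identity
\[
\int_M (G_g^{s}\ast|\psi_k|^2)|\psi_k|^2 = \int_M (G_g^{s}\ast|\psi_\infty|^2)|\psi_\infty|^2 + \int_M (G_g^{s}\ast|\varphi_k|^2)|\varphi_k|^2 + o(1),
\]
obtained by expanding the quartic form and bounding the nine mixed terms via Hardy--Littlewood--Sobolev with Riesz exponent $\lambda=n-2s=2$, matching the critical spinor embedding $H^{\frac{1}{2}}\hookrightarrow L^{2n/(n-1)}$, combined with the a.e.\ convergence $\varphi_k\to 0$ produced by Rellich. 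A parallel identity for the linear part then shows that $(\varphi_k)$ is itself a Palais--Smale sequence for $J_g$ at level $c-J_g(\psi_\infty)$. If $\varphi_k\to 0$ strongly we stop with $m=0$; otherwise, a Lions-type concentration-compactness analysis of the measures $|\varphi_k|^{2n/(n-1)}\,dv_g$ selects a concentration point $x_k^1\to x^1$ and a minimal scale $R_k^1\to 0$ at which a positive amount of mass concentrates.

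At that scale I rescale via $\rho_{k,1}(\xi)=\exp_{x_k^1}(R_k^1\xi)$, conformally transporting $\varphi_k$ to a bounded region of $T_{x^1}M\simeq\R^n$, with the precise weight dictated by the critical conformal scaling $2s=n-2$. The conformal covariance of $D_g$, combined with the transformation law $G_{\widetilde g}^s(x,y)=u(x)^{-1}u(y)^{-1}G_g^s(x,y)$ recorded in the introduction, turns the rescaled Palais--Smale condition (modulo vanishing curvature corrections) into one for the Euclidean functional $J_{g_{\R^n}}$. Standard interior regularity on $\R^n$ then extracts a nontrivial limit $\Psi_\infty^1$ solving (\ref{eq}) on $(\R^n,g_{\R^n})$, and inserting it into the template $(R_k^1)^{-1}\beta_1\sigma_{k,1}^*(\cdot)$ gives the bubble $\phi_k^1$. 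Setting $\varphi_k^{(1)}:=\varphi_k-\phi_k^1$ and iterating, the process terminates in finitely many steps because every bubble carries at least the positive ground state energy of $J_{g_{\R^n}}$, while the total energy is bounded by $c$.

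The main obstacle I anticipate is controlling cross-bubble interactions within the nonlocal quartic form. Unlike a local critical nonlinearity, the convolution couples distant points through the singularity $G_g^s(x,y)\sim|x-y|^{-(n-2)}$, so one must show that for any two bubbles parametrized by inequivalent data $(x_k^i,R_k^i)\neq(x_k^j,R_k^j)$ the mixed integrals
\[
\int_{M\times M}G_g^{s}(x,y)\,\langle\phi_k^i(x),\phi_k^j(x)\rangle\,\langle\phi_k^i(y),\phi_k^j(y)\rangle\,dv_g(x)\,dv_g(y)
\]
and their variants are $o(1)$. This reduces to Riesz-potential interaction estimates closed by the divergence of the standard bubble-tree quantity $R_k^i/R_k^j+R_k^j/R_k^i+|x_k^i-x_k^j|^2/(R_k^iR_k^j)$, which encodes the inequivalence of two bubbles. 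Once these interaction bounds are in place, the Brezis--Lieb identity extends to the full bubble sum and delivers both the $H^{\frac{1}{2}}$ decomposition (ii) and the energy identity (iii).
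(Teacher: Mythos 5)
Your proposal is correct and follows the same Struwe-type scheme as the paper: boundedness of the Palais--Smale sequence, a nonlocal Brezis--Lieb splitting that transfers the Palais--Smale property to $\varphi_k=\psi_k-\psi_\infty$ (this is precisely Lemma~\ref{lem:transpose}), concentration and rescaling at a small scale $R_k$, and iterative extraction of bubbles terminated because each bubble carries a definite positive Euclidean energy (Remark~\ref{rem:Sphere-R^n}). Two organizational differences are worth flagging. First, the paper runs the concentration argument on the potential, $(G_g^{s}*|\psi_k|^2)^n$, rather than on the spinor density $|\varphi_k|^{2n/(n-1)}$; the $\varepsilon$-regularity lemma shows the three candidate concentration sets $\Sigma_1,\Sigma_2,\Sigma_3$ coincide, so your choice is interchangeable, but the paper's choice makes the $\varepsilon$-regularity estimate for the convolution term somewhat more direct. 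Second, and more substantively, the paper peels off exactly one bubble per iteration, so the only Brezis--Lieb-type cancellations that ever appear are between a single residual and a single bubble (Lemma~\ref{lem:estimates}); the explicit pairwise cross-bubble interaction estimates you identify as ``the main obstacle'' are thereby absorbed into the iterated residuals and never have to be produced. Your plan --- decompose into all bubbles at once, then estimate each mixed convolution term via the bubble-tree separation parameter $R_k^i/R_k^j + R_k^j/R_k^i + |x_k^i-x_k^j|^2/(R_k^iR_k^j)\to\infty$ --- is a genuinely valid alternative and is standard in some of the literature, but it is strictly heavier than what the iterative argument requires here. One further small remark: your boundedness step tests against $\psi_k^+-\psi_k^-$ in one shot and invokes a spectral gap; the paper tests against $\psi_k^\pm$ separately and closes via $\|G_g^s*|\psi_k|^2\|_{L^n}^2\lesssim\int(G_g^s*|\psi_k|^2)|\psi_k|^2$; both routes give the same bound.
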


\noindent
As we will see in the proof, the same result holds for the functional $J_{g,\lambda}$, with the same bubbles at infinity. We also characterize the ground state solutions that appear in the bubbling phenomena in the theorem above.
\begin{theorem}\label{leasten}
Let $\psi\in C^{\infty}(\Sigma_{g_{0}}S^{n})$ be a non-trivial solution of 
\begin{equation}\label{sol sphere}
D_{g_{0}}\psi=\Big(G^{s}_{g_{0}}\ast \psi\Big)\psi, \text{ on } S^{n},
\end{equation}
where $g_{0}$ is the round metric on $S^{n}$. Then,
\begin{equation}\label{eq:low bound sol sphere}
J_{g_0}(\psi)\geq \overline{Y}(S^{n},[g_{0}]):=\frac{\lambda^{+}(S^{n}, [g_{0}])^{2} Y_s(S^{n}, [g_{0}])}{4} .
\end{equation}
Moreover, if $J_{g_{0}}(\psi)=\overline{Y}(S^{n},[g_{0}])$  then, up to a conformal change, $\psi$ is a $-\frac{1}{2}$-Killing spinor. That is, there exists a $-\frac{1}{2}$-Killing spinor $\Psi\in \Sigma_{g_{0}}S^{n}$ and a conformal diffeomorphism $f\in Conf(S^{n},g_{0})$ such that 
$$\psi=\Big(det(df)\Big)^{\frac{n-1}{2n}}F_{f^{*}g_{0},g_{0}}\Big(f^{*}\Psi\Big).$$
\end{theorem}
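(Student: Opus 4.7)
My plan is to sandwich the energy of any solution between two conformally invariant Sobolev-type inequalities on the round sphere, and then to read off the Killing spinor structure from the corresponding equality cases. The first step would be to test the equation (\ref{sol sphere}) against $\psi$, which yields the solution identity
\[
\int_{S^n}\langle D_{g_0}\psi,\psi\rangle\, dv_{g_0}=\int_{S^n}(G_{g_0}^s\ast|\psi|^2)|\psi|^2\, dv_{g_0},
\]
reducing the energy of a solution to $J_{g_0}(\psi)=\tfrac{1}{4}\int_{S^n}\langle D_{g_0}\psi,\psi\rangle\, dv_{g_0}=\tfrac{1}{4}\int_{S^n}(G_{g_0}^s\ast|\psi|^2)|\psi|^2\, dv_{g_0}$.

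Next, setting $p=\tfrac{2n}{n-1}$, I would invoke two conformal Sobolev inequalities. By the variational characterization of the spinorial conformal invariant in the spirit of B\"ar-Hijazi-Ammann,
\[
\lambda^+(S^n,[g_0])\,\|\psi\|_{L^p}^{2}\le \int_{S^n}\langle D_{g_0}\psi,\psi\rangle\, dv_{g_0}
\]
holds whenever the right-hand side is positive, while the fractional Yamabe inequality for $P_{g_0}^s$ with $2s=n-2$, written in its dual form via the Green's function, gives
\[
\int_{S^n\times S^n} G_{g_0}^s(x,y)\,f(x)\,f(y)\, dv(x)\,dv(y)\le Y_s(S^n,[g_0])^{-1}\,\|f\|_{L^{n/(n-1)}}^{2}
\]
for every $f\ge 0$. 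Choosing $f=|\psi|^2$ (so $\|f\|_{L^{n/(n-1)}}=\|\psi\|_{L^p}^{2}$) and using the solution identity, I would chain the two estimates into
\[
\lambda^+\,\|\psi\|_{L^p}^{2}\le \int \langle D\psi,\psi\rangle = \int (G^s\ast|\psi|^2)|\psi|^2 \le Y_s^{-1}\,\|\psi\|_{L^p}^{4}.
\]
Since $\psi\not\equiv 0$, this forces $\|\psi\|_{L^p}^{2}\ge \lambda^+\, Y_s$, and substituting back gives the desired lower bound $J_{g_0}(\psi)\ge \tfrac{1}{4}(\lambda^+)^2 Y_s=\overline{Y}(S^n,[g_0])$.

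For the equality case, both Sobolev inequalities above must saturate. Equality in the fractional Yamabe inequality would force $|\psi|^2$ (as a density on $S^n$) to be a Yamabe extremal; by the classification of extremals of the fractional Yamabe problem on $S^n$, this means $|\psi|^2=c\,u^{n-1}$ with $u$ such that $u^{2}g_0=f^*g_0$ for some $f\in Conf(S^n,g_0)$. Equality in the spinorial Sobolev inequality, combined with the Ammann-B\"ar characterization of extremals of the spinorial conformal invariant on the round sphere, would force the conformally transformed spinor $\widetilde\psi=u^{(1-n)/2}\psi$ to have constant pointwise norm and to be an eigenspinor of $D_{u^2 g_0}$, hence a $-\tfrac{1}{2}$-Killing spinor $\Psi$. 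Undoing this conformal identification through the spinor bundle isomorphism $F_{f^{*}g_0,g_0}$ and using $(\det df)^{(n-1)/(2n)}=u^{(n-1)/2}$ reproduces the claimed expression for $\psi$. The hard part will be precisely this equality analysis: it requires both the sharp classification of fractional Yamabe extremals on $S^n$ and the characterization of spinorial conformal invariant extremals as Killing spinors, and verifying that the conformal diffeomorphism selected by one saturation is compatible with the one selected by the other; once these structural ingredients are in place, the rest is the direct chaining argument above.
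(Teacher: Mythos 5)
The chain you propose relies on the inequality
\[
\lambda^{+}(S^{n},[g_{0}])\,\|\psi\|_{L^{p}}^{2}\;\le\;\int_{S^{n}}\langle D_{g_{0}}\psi,\psi\rangle\,dv,\qquad p=\tfrac{2n}{n-1},
\]
``holding whenever the right-hand side is positive,'' but this is not a theorem, and in fact it is false. The functional $\psi\mapsto\int\langle D\psi,\psi\rangle/\|\psi\|_{L^{p}}^{2}$ is strongly indefinite and its restriction to the cone $\{\int\langle D\psi,\psi\rangle>0\}$ has infimum zero rather than $\lambda^{+}$: taking $\psi=\varphi_{1}+(1-\varepsilon)\varphi_{-1}$ with $\varphi_{\pm1}$ normalized eigenspinors of the two smallest-magnitude eigenvalues $\pm\lambda_{1}$ of $D_{g_{0}}$, one has $\int\langle D\psi,\psi\rangle=\lambda_{1}(2\varepsilon-\varepsilon^{2})\to 0$ while $\|\psi\|_{L^{p}}$ stays bounded away from $0$, so the inequality fails as $\varepsilon\to 0$. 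The actual B\"{a}r--Hijazi--Ammann characterization is
\[
\lambda^{+}(M,[g])=\inf_{\langle D\psi,\psi\rangle_{L^{2}}\neq 0}\frac{\bigl(\int_{M}|D\psi|^{\frac{2n}{n+1}}\,dv\bigr)^{\frac{n+1}{n}}}{\bigl|\int_{M}\langle D\psi,\psi\rangle\,dv\bigr|},
\]
with $|D\psi|^{\frac{2n}{n+1}}$ (not $|\psi|^{\frac{2n}{n-1}}$) in the numerator. You cannot pass from this to the inequality you want by H\"{o}lder in the stated direction; the two estimates point the same way, not the opposite way. Your inequality is in fact \emph{equivalent}, for solutions of~(\ref{sol sphere}), to the target inequality $4J_{g_{0}}(\psi)\ge(\lambda^{+})^{2}Y_{s}$ once one also uses the fractional Yamabe estimate, so assuming it is circular.

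The paper's route avoids this: it plugs the equation $D\psi=(G^{s}\ast|\psi|^{2})\psi$ directly into the Ammann quotient (so that $|D\psi|=|\psi|\,(G^{s}\ast|\psi|^{2})$ and $\langle D\psi,\psi\rangle=(G^{s}\ast|\psi|^{2})|\psi|^{2}$), applies H\"{o}lder with exponents $(\tfrac{n+1}{n},\,n+1)$ to the numerator to get $\lambda^{+}\le\bigl(\int(G^{s}\ast|\psi|^{2})^{n}\,dv\bigr)^{1/n}$, and then uses the Sobolev embedding $\|u\|_{L^{n}}\le Y_{s}^{-1/2}\|u\|_{H^{s}}$ for $u=G^{s}\ast|\psi|^{2}$ (with $P^{s}u=|\psi|^{2}$ and $\|u\|_{H^{s}}^{2}=\int u|\psi|^{2}=4J_{g_{0}}(\psi)$) to close the estimate. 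Your fractional Yamabe/Green's-function Sobolev step is correct, and your equality-case sketch invokes the right structural ingredients (classification of extremals for the fractional Yamabe problem \`{a} la Chen--Li--Ou, and the Hijazi/Killing-spinor equality case), but the argument cannot be completed without replacing the false spinorial inequality with the Ammann quotient plus the H\"{o}lder step.
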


\noindent
As a corollary of this Theorem we have an Aubin-type inequality for the problem $(\ref{eq})$:
\begin{corollary}\label{cor1}
Under the assumptions of Theorem \ref{thmbubble}, there exists a conformally invariant constant $\overline{Y}(M,[g])>0$ with the following properties:
\begin{itemize}
\item[i)] $\overline{Y}(M,[g])\leq \overline{Y}(S^{n},[g_{0}])=\frac{\lambda^{+}(S^{n}, [g_{0}])^{2} Y_{s}(S^{n}, [g_{0}])}{4}$.
\item[ii)] If $\overline{Y}(M,[g])< \overline{Y}(S^{n},[g_{0}])$ then the problem $(\ref{eq})$ has a non-trivial solution.

\end{itemize}
\end{corollary}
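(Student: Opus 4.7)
The plan is to introduce $\overline{Y}(M,[g])$ as a minimax level for $J_g$ that is intrinsically conformally invariant, and to derive both items from a test-spinor construction combined with Theorems~\ref{thmbubble} and~\ref{leasten}. A natural candidate is the generalized Nehari level associated with the spectral splitting $H^{1/2}(\Sigma_g M)=H^+\oplus H^-$ induced by $D_g$,
$$\overline{Y}(M,[g]) \;:=\; \inf_{\psi\in \mathcal{N}_g} J_g(\psi),$$
where $\mathcal{N}_g$ is the Szulkin-Weth type set $\{\psi\in H^{1/2}\setminus H^-:\ J_g'(\psi)\psi=0\text{ and }J_g'(\psi)\varphi=0\ \forall\varphi\in H^-\}$. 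Every nontrivial critical point lies in $\mathcal{N}_g$. The conformal invariance of $J_g$ established in the introduction (for $2s=n-2$) gives the conformal invariance of $\overline{Y}(M,[g])$, while the positivity of $Y_s(M,[g])$ makes $G^s_g$ positive and, combined with the spinorial Hardy-Littlewood-Sobolev inequality, forces $\overline{Y}(M,[g])>0$.

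\smallskip
For part (i), I would transplant ground states from the sphere. Given $\varepsilon>0$, pick a solution $\Psi$ of (\ref{sol sphere}) on $(S^n,g_0)$ with $J_{g_0}(\Psi)\leq \overline{Y}(S^n,[g_0])+\varepsilon$; by inverse stereographic projection and the conformal invariance of $J$ it corresponds to a solution $\Psi_\infty$ on $\R^n$ with $J_{g_{\R^n}}(\Psi_\infty)=J_{g_0}(\Psi)$. Fix $x_0\in M$, introduce conformal normal coordinates at $x_0$, and for $R\to 0$ define
$$\psi_R(x) \;:=\; R^{-\frac{n-1}{2}}\,\beta(x)\,\Psi_\infty\bigl(R^{-1}\exp_{x_0}^{-1}(x)\bigr),$$
with $\beta$ a cutoff near $x_0$ (and a local trivialization of $\Sigma_g M$ implicit). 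Using the conformal transformation laws of $D_g$ and $G^s_g$, the on-diagonal expansion $G^s_g(x,y)\sim c_n|x-y|^{-(n-2s)}$, and the fact that conformal normal coordinates make the error in the metric of arbitrarily high order near $x_0$, a direct expansion gives $J_g(\psi_R)\to J_{g_{\R^n}}(\Psi_\infty)$ as $R\to 0$. The projection of $\psi_R$ onto $\mathcal{N}_g$ introduces only $o(1)$ corrections, so $\overline{Y}(M,[g])\leq \overline{Y}(S^n,[g_0])+\varepsilon$, and (i) follows by letting $\varepsilon\to 0$.

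\smallskip
For part (ii), assume the strict inequality. Ekeland's principle on $\mathcal{N}_g$ (or an equivalent linking scheme) provides a Palais-Smale sequence $(\psi_k)$ for $J_g$ at level $c=\overline{Y}(M,[g])$. Applying Theorem~\ref{thmbubble}, one obtains $\psi_\infty$, a smooth solution of (\ref{eq}), together with bubbles $\Psi^j_\infty$ on $\R^n$ such that
$$c \;=\; J_g(\psi_\infty)+\sum_{j=1}^m J_{g_{\R^n}}(\Psi^j_\infty).$$
Each bubble is a nontrivial solution of (\ref{eq}) on $\R^n$; inverse stereographic projection (and conformal invariance of $J$) transports it into a nontrivial solution on $(S^n,g_0)$ of the same energy, whence Theorem~\ref{leasten} yields $J_{g_{\R^n}}(\Psi^j_\infty)\geq \overline{Y}(S^n,[g_0])$. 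If $m\geq 1$ we would get $c\geq \overline{Y}(S^n,[g_0])$, contradicting our assumption; therefore $m=0$, $\psi_k\to\psi_\infty$ strongly in $H^{1/2}$, and $\psi_\infty$ is a critical point at level $c>0$, in particular nontrivial.

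\smallskip
The main obstacle is the energy expansion of the test spinors in Step~2: one must carefully account for the metric-induced errors in conformal normal coordinates and for the off-diagonal corrections to $G^s_g$, and show that the projection of $\psi_R$ onto the indefinite Nehari set is well-defined for $R$ small enough with only $o(1)$ energy cost. A secondary difficulty is ensuring that the minimax scheme on the strongly indefinite functional $J_g$ yields a genuine Palais-Smale sequence exactly at the level $\overline{Y}(M,[g])$.
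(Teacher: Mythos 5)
Your proposal is correct and takes essentially the same route as the paper. The paper realizes $\overline{Y}(M,[g])$ as the min-max level $\delta_0$ of the reduced functional $\tilde{J}$ built from the $\tau$-map of Proposition~\ref{proptau} (equivalent to your generalized Nehari level), proves part (i) by grafting the Euclidean ground state as a concentrating test spinor $\varphi_\varepsilon$ and combining Lemmas~\ref{lemma1 exp}--\ref{lemma 2 exp} with Proposition~\ref{prop exp} to obtain $\delta_0\leq J_g(\varphi_\varepsilon)+O(\|\nabla J_g(\varphi_\varepsilon)\|^2)\to\overline{Y}(S^n,[g_0])$, and deduces part (ii) from Theorem~\ref{thmbubble} together with the bubble-energy lower bound of Theorem~\ref{leasten} — precisely the two difficulties you flag are what Lemmas~\ref{lemma1 exp},~\ref{lemma 2 exp} and Propositions~\ref{prop diff},~\ref{prop exp} resolve.
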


\noindent
Notice that in particular, when $n=4$, we can state $ii)$ in the setting of the conformal Einstein-Dirac equation. That is, if $\overline{Y}(M,[g])< \overline{Y}(S^{n},[g_{0}])$ the conformal Einstein-Dirac problem $(\ref{DE4})$ is solvable.

\noindent
The second part of this paper deals with the existence of solutions for the linearly perturbed problem $(\ref{eq2})$. Namely, we prove a Brezis-Nirenberg type result associated to the original problem $(\ref{eq})$.
\begin{theorem}\label{thmlambda}
Assume that $(M,[g])$ has a positive Yamabe invariant and $Y_{s}(M,[g])>0$. Then for any $\lambda \not \in Spec(D_{g})$ and $\lambda>0$, there exists a non-trivial ground-state solution $\psi_{\lambda}$ for $(\ref{eq2})$. Moreover, if $\lambda \in (\lambda_{k},\lambda_{k+1})$, then $\psi_{\lambda}\to 0$ as $\lambda \to \lambda_{k+1}$.
\end{theorem}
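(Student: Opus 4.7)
\textbf{Proof plan for Theorem \ref{thmlambda}.} The strategy is to obtain $\psi_\lambda$ as a non-trivial critical point of the strongly indefinite functional $J_{g,\lambda}$ through a linking argument of Benci--Rabinowitz type, with compactness controlled by Theorem \ref{thmbubble} (which, by the remark following it, applies verbatim to $J_{g,\lambda}$). Since $\lambda\notin\mathrm{Spec}(D_g)$, the operator $D_g-\lambda$ is invertible and induces an orthogonal splitting $H^{1/2}(\Sigma_g M)=H_\lambda^+\oplus H_\lambda^-$ into its positive and negative spectral subspaces, each infinite-dimensional. On this splitting the quadratic part of $J_{g,\lambda}$ reads $\tfrac12\|\psi^+\|_\lambda^2-\tfrac12\|\psi^-\|_\lambda^2$, where $\|\cdot\|_\lambda$ is the norm induced by $|D_g-\lambda|^{1/2}$, equivalent to the $H^{1/2}$-norm.

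First I would set up the linking geometry. The Hardy--Littlewood--Sobolev inequality together with the Sobolev embedding $H^{1/2}(\Sigma_g M)\hookrightarrow L^{2n/(n-1)}(\Sigma_g M)$ bounds the nonlinearity $N(\psi):=\int(G^s_g*|\psi|^2)|\psi|^2$ by a constant times $\|\psi\|_{H^{1/2}}^4$, which yields $J_{g,\lambda}\geq\alpha>0$ on a small sphere $S_\rho\subset H_\lambda^+$. Fix $e\in H_\lambda^+$ and put $Q_R=\{v+te:v\in H_\lambda^-,\,t\geq 0,\,\|v+te\|_\lambda\leq R\}$; on $H_\lambda^-$ the functional is non-positive, and along the ray $te$ the negative quartic term dominates and forces $J_{g,\lambda}\to-\infty$, so that $\partial Q_R$ and $S_\rho$ link for $\rho<R$. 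The strongly-indefinite linking theorem, implemented via a Galerkin scheme over the eigenspaces of $D_g$ or the Bartsch--Ding pseudo-gradient flow, produces a Palais--Smale sequence for $J_{g,\lambda}$ at the min-max level
\begin{equation*}
c_\lambda=\inf_{h\in\Gamma}\max_{\psi\in Q_R}J_{g,\lambda}(h(\psi))\geq\alpha>0 \; ,
\end{equation*}
where $\Gamma$ is the standard class of admissible deformations.

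The main obstacle is the Brezis--Nirenberg-type energy estimate $c_\lambda<\overline{Y}(S^n,[g_0])$, placing $c_\lambda$ strictly below the bubble threshold of Theorem \ref{thmbubble}. I would pick $e$ to be a cut-off, rescaled sphere ground state $\phi_\varepsilon$ conformally transplanted near a point $x_0\in M$ in normal coordinates, following the Aubin strategy. Using Theorem \ref{leasten} to identify the leading order of $\sup_{Q_R}J_{g,0}$ with $\overline{Y}(S^n,[g_0])$ (up to a geometric error controlled by the local expansion $G^s_g(x,y)=c_n|x-y|^{-(n-2s)}+O(|x-y|^{-(n-2s)+1})$), and exhibiting the strictly negative correction $-\tfrac{\lambda}{2}\int|\phi_\varepsilon|^2$ coming from the linear perturbation, one shows that this correction dominates the geometric error for small $\varepsilon$; the sign is precisely where the assumption $\lambda>0$ enters. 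The strict inequality then rules out bubble formation in the Palais--Smale sequence via Theorem \ref{thmbubble}, yielding strong $H^{1/2}$-convergence to a non-trivial critical point $\psi_\lambda$; by construction it is a ground state in the linking class.

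For the vanishing as $\lambda\to\lambda_{k+1}^-$, the Nehari-type identity $J_{g,\lambda}(\psi_\lambda)-\tfrac12\langle J'_{g,\lambda}(\psi_\lambda),\psi_\lambda\rangle=\tfrac14 N(\psi_\lambda)$ gives $N(\psi_\lambda)=4c_\lambda$. Testing the min-max with a unit eigenspinor $\varphi_{k+1}$ of $\lambda_{k+1}$ (which lies in $H_\lambda^+$ for $\lambda<\lambda_{k+1}$) and optimizing in $v\in H_\lambda^-$ (whose maximizer is of order $O(t^3)$), one obtains
\begin{equation*}
c_\lambda\leq\sup_{t\geq 0,\,v\in H_\lambda^-}J_{g,\lambda}(t\varphi_{k+1}+v)=\frac{(\lambda_{k+1}-\lambda)^2\|\varphi_{k+1}\|_{L^2}^4}{4\,N(\varphi_{k+1})}\bigl(1+O(\lambda_{k+1}-\lambda)\bigr)\longrightarrow 0 \; ,
\end{equation*}
so $N(\psi_\lambda)\to 0$. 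Since any non-trivial critical point of $J_{g,\lambda_{k+1}}$ has strictly positive $N$-energy, applying Theorem \ref{thmbubble} to $\{\psi_\lambda\}$ viewed as a Palais--Smale sequence for $J_{g,\lambda_{k+1}}$ rules out both the appearance of bubbles (whose energies are $\geq\overline{Y}(S^n,[g_0])$) and of a non-trivial weak limit; hence $\psi_\lambda\to 0$ strongly in $H^{1/2}$.
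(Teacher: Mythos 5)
Your proposal is correct in broad outline but takes a genuinely different technical route from the paper's. The paper does not use a Benci--Rabinowitz linking argument on the full strongly indefinite functional. Instead, Proposition~\ref{proptau} constructs a reduction map $\tau:H_\lambda^+\to H_\lambda^-$ that maximizes $h\mapsto J_\lambda(\psi+h)$ for each fixed $\psi\in H_\lambda^+$, yielding a reduced functional $\tilde J(\psi)=J_\lambda(\psi+\tau(\psi))$ on $H_\lambda^+$ with mountain-pass geometry; the critical level is then realized as a Nehari min-max $\delta_\lambda=\inf_{\psi\neq0}\max_{t>0}\tilde J(t\psi)$. The decisive technical device the paper uses, and that your plan lacks, is Proposition~\ref{prop exp}: for any $(PS)_c$-sequence $(\psi_k)$ of $J_\lambda$ one has $\delta_\lambda\leq J_\lambda(\psi_k)+O(\|\nabla J_\lambda(\psi_k)\|^2)$. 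Because the grafted test spinor $\varphi_\varepsilon$ is itself a $(PS)$-sequence with $\|\nabla J_\lambda(\varphi_\varepsilon)\|=O(\varepsilon)$ (Lemma~\ref{lemma1 exp}) and $J_\lambda(\varphi_\varepsilon)\leq\overline Y-\lambda Q(\varepsilon)+O(\varepsilon^2)$ with $Q(\varepsilon)\sim\varepsilon$ (Lemma~\ref{lemma 2 exp}), this estimate immediately gives $\delta_\lambda<\overline Y$ without ever having to understand what $\tau(\varphi_\varepsilon^+)$ or the optimal $H_\lambda^-$-companion of $\varphi_\varepsilon$ looks like. In your direct linking scheme you have to estimate $\sup\{J_{g,\lambda}(t\phi_\varepsilon^++v):t\geq0,\ v\in H_\lambda^-\}$ explicitly, and you should be aware that (i) the bubble $\phi_\varepsilon$ is not itself in $H_\lambda^+$ so the linking vector $e$ must be $\phi_\varepsilon^+$, and (ii) the interaction between $\phi_\varepsilon^+$ and the optimizing $v$ has to be shown to be lower order in $\varepsilon$. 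This is doable, but it is precisely the bookkeeping that Propositions~\ref{prop diff} and~\ref{prop exp} are designed to replace. Each approach has merit: yours is the more classical linking formulation; the paper's reduction removes the indefinite direction at the outset and, crucially, converts any cheap $(PS)$-sequence into an upper bound on the min-max level.

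For the vanishing $\psi_\lambda\to0$ as $\lambda\to\lambda_{k+1}^-$, your argument via the Nehari-type identity $N(\psi_\lambda)=4\delta_\lambda$ and testing with the eigenspinor $\varphi_{k+1}$ is sound and is in fact more explicit than what the paper offers: the paper only records the bifurcation-theoretic fact (via spectral flow, Fitzpatrick--Pejsachowicz--Recht) that a local branch emanates from $(\lambda_{k+1},0)$ and vanishes, and then informally identifies the ground state with that branch. One small caveat in your closing step: you invoke Theorem~\ref{thmbubble} for $J_{g,\lambda_{k+1}}$ with $\lambda_{k+1}\in\mathrm{Spec}(D_g)$, where $D_g-\lambda_{k+1}$ has a kernel; you should check that the bubbling result (or at least the energy lower bound for non-zero weak limits) survives this degeneracy, or alternatively run the argument with $\lambda$ strictly below $\lambda_{k+1}$, using uniform-in-$\lambda$ bounds.
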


\vspace{5mm}

{\noindent\bf Acknowledgment}

\noindent
The first author is supported by the AMS-Simons Research Enhancement Grant for PUI faculty under the project "Conformally Invariant
Non-Local Equations on Spin Manifolds". He also wants to express his gratitude to the department of Mathematics at MIT for the warm hospitality during the finalization of this manuscript.

\section{Preliminaries}
A spin structure on a riemannian manifold $(M,g)$ is a pair $(P_{Spin}(M,g),\sigma)$, where $P_{Spin}(M,g)$ is a $Spin(n)$-principal bundle and $\sigma : P_{Spin}(M,g)\to P_{SO}(M,g)$ is a 2-fold covering map, which restricts to a non-trivial covering $\kappa: Spin(n)\to SO(n)$ on each fiber. That is, the quotient of each fiber by $\Z_{2}$ is isomorphic to the frame bundle of $M$ and hence, the following diagram commutes:
\begin{center}

\tikzset{every picture/.style={line width=0.75pt}} %set default line width to 0.75pt        

\begin{tikzpicture}[x=0.75pt,y=0.75pt,yscale=-1,xscale=1]
%uncomment if require: \path (0,235); %set diagram left start at 0, and has height of 235

%Straight Lines [id:da28314517846643894] 
\draw    (173,59) -- (380,59) ;
\draw [shift={(382,59)}, rotate = 180] [color={rgb, 255:red, 0; green, 0; blue, 0 }  ][line width=0.75]    (10.93,-3.29) .. controls (6.95,-1.4) and (3.31,-0.3) .. (0,0) .. controls (3.31,0.3) and (6.95,1.4) .. (10.93,3.29)   ;
%Straight Lines [id:da5513291090020318] 
\draw    (150,80) -- (247.36,147.86) ;
\draw [shift={(249,149)}, rotate = 214.88] [color={rgb, 255:red, 0; green, 0; blue, 0 }  ][line width=0.75]    (10.93,-3.29) .. controls (6.95,-1.4) and (3.31,-0.3) .. (0,0) .. controls (3.31,0.3) and (6.95,1.4) .. (10.93,3.29)   ;
%Straight Lines [id:da9472161287316985] 
\draw    (415,79) -- (314.63,149.85) ;
\draw [shift={(313,151)}, rotate = 324.78] [color={rgb, 255:red, 0; green, 0; blue, 0 }  ][line width=0.75]    (10.93,-3.29) .. controls (6.95,-1.4) and (3.31,-0.3) .. (0,0) .. controls (3.31,0.3) and (6.95,1.4) .. (10.93,3.29)   ;

% Text Node
\draw (86,43) node [anchor=north west][inner sep=0.75pt]    {$P_{Spin}( M,g)$};
% Text Node
\draw (388,43) node [anchor=north west][inner sep=0.75pt]    {$P_{SO}( M,g)$};
% Text Node
\draw (257,151) node [anchor=north west][inner sep=0.75pt]    {$( M,g)$};
% Text Node
\draw (275,30) node [anchor=north west][inner sep=0.75pt]    {$\sigma $};

\end{tikzpicture}
\end{center}

\noindent
We denote by $\mathbb{S}_{n}$ the unique (up to isomorphism) irreducible complex $Cl_{n}$-module such that $Cl_{n}\otimes \C \equiv End_{\C}(\mathbb{S}_{n})$ as a $\C$-algebra, where $Cl_{n}$ denotes the Clifford algebra of $\R^{n}$. This allows us to define the spinor bundle $\Sigma_{g} M$ as
$$\Sigma_{g} M:=P_{Spin}(M,g)\times_{\sigma} \mathbb{S}_{n}.$$
In fact, $\Sigma_{g} M$ is a Hermitian bundle equipped with a metric connection induced by the Levi-Civita connection on $TM$, that we will denote by $\nabla$. Moreover, there is a natural Clifford multiplication defined by the action of $TM$ on $\Sigma_{g} M$. We can summarize the main properties of the spinor bundle in the following few points:

\begin{itemize}
\item For all $X, Y \in C^{\infty}(M,TM)$ and $\psi \in C^{\infty}(M,\Sigma_{g} M)$ we have $X\cdot Y \cdot \psi +Y\cdot X \cdot \psi=-2g(X,Y)\psi$. Here, "$\cdot$" denotes the Clifford multiplication.
\item If $(\cdot,\cdot)$ denotes the Hermitian metric on $\Sigma_{g} M$, then for all $X \in C^{\infty}(M,TM)$ and $\psi, \phi \in C^{\infty}(M,\Sigma_{g} M)$ we have $(X\cdot \psi , \phi)=-(\psi, X\cdot \phi)$.
\item For all $\psi, \phi \in C^{\infty}(M,\Sigma_{g} M)$ and $X\in C^{\infty}(M,TM)$, then $X(\psi,\phi)=(\nabla_{X}\psi,\phi)+(\psi,\nabla_{X} \phi)$.
\item For all $X, Y \in C^{\infty}(M,TM)$ and $\psi \in C^{\infty}(M,\Sigma_{g} M)$ we have $\nabla_{X} (Y\cdot \psi)=(\nabla_{X}Y)\cdot \psi+Y\cdot \nabla_{X}\psi$.
\end{itemize}

\noindent
For the rest of the paper, we let $\langle \cdot, \cdot \rangle:= Re (\cdot,\cdot)$. Then $\langle \cdot, \cdot \rangle$ defines a metric on $\Sigma_{g} M$.
The Dirac operator $D_{g}$ is then defined on $C^{\infty}(M,\Sigma_{g} M)$ as the composition of the Clifford multiplication and the connection $\nabla$. Indeed, if $(e_{1}, \cdots, e_{n})$ is a local orthonormal frame around a point $p\in M$ and $\psi \in C^{\infty}(M,\Sigma_{g} M)$ then one can locally define $D_{g}$ by  
$$D_{g}\psi :=\sum_{i=1}^{n}e_{i}\cdot \nabla_{e_{i}} \psi.$$
The Dirac operator is a natural first order operator acting on smooth sections of $\Sigma_{g} M$. Moreover, if $M$ is compact, then $D_{g}$ is essentially self-adjoint on $L^{2}(\Sigma_{g} M):=L^{2}(M,\Sigma_{g} M)$, with compact resolvent. In particular, there exists a complete orthonormal basis $(\varphi_{k})_{k\in \Z}$ of $L^{2}(\Sigma_{g} M)$ consisting of eigenspinors of $D_{g}$. That is $D_{g}\varphi_{k}=\lambda_{k}\varphi_{k}$, with $\lambda_{k}\to \pm \infty$ when $k\to \pm \infty$. We will use the convention that $\lambda_{k}>0$ (resp. $\lambda_{k}<0$) when $k>0$ (resp. $k<0)$.

\begin{proposition}[\cite{Bour, Friedrich2000}]
Consider a compact spin manifold $(M,g,\Sigma_{g}M)$, then
\begin{itemize}
\item[i)] The Dirac operator $D_{g}$ is conformally invariant. That is, if $\hat{g}:=e^{2u}g$, then there exists a unitary isomorphism $F_{g,\hat{g}}:\Sigma_{g}M \to \Sigma_{\hat{g}}M$ so that for $\varphi \in C^{\infty}(M,\Sigma_{g}M)$, $$D_{\hat{g}}(e^{-\frac{n-1}{2}u}F_{g,\hat{g}}(\varphi))=e^{-\frac{n+1}{2}u}F_{g,\hat{g}}(D_{g}\varphi).$$
\item[ii)] For $\varphi \in C^{\infty}(M,\Sigma_{g}M)$, $D_{g}^{2}\varphi =-\Delta_{g}\varphi +\frac{R_{g}}{4}\varphi$, where $R_{g}$ is the scalar curvature. 
\end{itemize}
\end{proposition}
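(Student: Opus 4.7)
The plan is to treat the two parts separately; both are classical calculations, so the proof proposal concentrates on the structural steps rather than on sign-level bookkeeping.

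For part (ii) (the Lichnerowicz formula), I would argue pointwise. Fix $p \in M$ and choose a local orthonormal frame $(e_1,\dots,e_n)$ which is synchronous at $p$, i.e.\ $\nabla_{e_i}e_j(p)=0$, so in particular $[e_i,e_j](p)=0$. At $p$,
\begin{equation*}
D_g^2\varphi\big|_p \;=\; \sum_{i,j} e_i\cdot \nabla_{e_i}(e_j\cdot \nabla_{e_j}\varphi) \;=\; \sum_{i,j} e_i\cdot e_j \cdot \nabla_{e_i}\nabla_{e_j}\varphi.
\end{equation*}
I would then split the sum into its diagonal and off-diagonal parts. The Clifford identity $e_i\cdot e_i = -1$ collapses the diagonal to $-\sum_i \nabla_{e_i}\nabla_{e_i}\varphi$, which in the synchronous frame equals $-\Delta_g\varphi|_p$. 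The off-diagonal piece, antisymmetrized via $e_i\cdot e_j = -e_j\cdot e_i$ for $i\neq j$, becomes $\tfrac{1}{2}\sum_{i\neq j} e_i\cdot e_j\cdot R^{\Sigma}(e_i,e_j)\varphi$, where $R^{\Sigma}$ is the curvature of the spinor connection. Substituting the identity $R^{\Sigma}(e_i,e_j)\varphi = \tfrac{1}{4}\sum_{k,l} R_{ijkl}\,e_k\cdot e_l\cdot \varphi$ and exploiting the first Bianchi identity together with the Clifford anticommutation rules collapses the resulting quadruple sum to $\tfrac{R_g}{4}\varphi$. This last algebraic reduction is the core computation.

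For part (i) (conformal covariance), the strategy is first to derive the transformation rule for the spin connection and then to contract against the rescaled frame $\hat e_i = e^{-u}e_i$ to read off that of $D_g$. The Koszul formula gives
\begin{equation*}
\hat{\nabla}_X Y \;=\; \nabla_X Y + X(u)Y + Y(u)X - g(X,Y)\,\operatorname{grad}_g u,
\end{equation*}
and its standard lift through the canonical unitary isomorphism $F_{g,\hat{g}}\colon \Sigma_g M\to \Sigma_{\hat{g}}M$ (which intertwines Clifford multiplication up to the rescaling $e_i\leftrightarrow e^{-u}\hat e_i$) reads
\begin{equation*}
\hat{\nabla}_X\bigl(F_{g,\hat{g}}(\varphi)\bigr) \;=\; F_{g,\hat{g}}\!\left(\nabla_X\varphi - \tfrac{1}{2}X\cdot_g\operatorname{grad}_g u\cdot_g\varphi - \tfrac{1}{2}X(u)\varphi\right).
\end{equation*}
Contracting against $\hat e_i\cdot_{\hat g}$ and using $X\cdot Y + Y\cdot X = -2g(X,Y)$ to reduce the double-Clifford contributions, a direct summation in $i$ yields
\begin{equation*}
D_{\hat g}\bigl(F_{g,\hat g}(\varphi)\bigr) \;=\; e^{-u}F_{g,\hat g}\!\left(D_g\varphi + \tfrac{n-1}{2}\operatorname{grad}_g u\cdot_g\varphi\right).
\end{equation*}
Finally I would apply this identity with $\varphi$ replaced by $e^{-\frac{n-1}{2}u}\varphi$ and invoke the spinorial product rule $D_g(f\varphi)=\operatorname{grad}_g f\cdot_g\varphi + f\,D_g\varphi$: the Clifford term produced by differentiating $e^{-\frac{n-1}{2}u}$ cancels exactly against the $\tfrac{n-1}{2}$ correction term, leaving the stated equality $D_{\hat g}(e^{-\frac{n-1}{2}u}F_{g,\hat g}(\varphi)) = e^{-\frac{n+1}{2}u}F_{g,\hat g}(D_g\varphi)$.

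The main obstacle is not conceptual. In both parts the only nontrivial inputs beyond the definitions are the curvature identity $R^{\Sigma}(X,Y)\varphi = \tfrac{1}{4}\sum_{k,l}R_{XYkl}\,e_k\cdot e_l\cdot\varphi$ and, in part (i), the explicit construction of $F_{g,\hat g}$ as a fiberwise isometric bundle isomorphism descending from the spin double cover, both of which can be invoked from the classical references (Lawson--Michelsohn, Friedrich). The real work is therefore the bookkeeping of signs in the Clifford and spin-connection transformations, rather than any genuinely delicate analytic step.
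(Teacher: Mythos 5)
The paper does not prove this proposition; it is cited directly from the classical references \cite{Bour, Friedrich2000}, so there is no in-paper argument to compare against. Your outline reproduces the standard derivation from those sources — the synchronous-frame Lichnerowicz computation using the spin curvature identity $R^{\Sigma}(e_i,e_j)\varphi=\tfrac14\sum_{k,l}R_{ijkl}e_k\cdot e_l\cdot\varphi$ and the first Bianchi identity for part (ii), and the Koszul-formula transformation of the Levi-Civita connection lifted to the spin connection, followed by contraction against the rescaled frame $\hat e_i=e^{-u}e_i$, for part (i) — and the cancellation you invoke via $D_g(f\varphi)=\operatorname{grad}f\cdot\varphi+fD_g\varphi$ with $f=e^{-\frac{n-1}{2}u}$ does produce the claimed identity. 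The proof is correct and is the expected one.
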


\noindent
In what follows, we will identify spinors $\varphi \in \Sigma_{g}M$ with their isomorphic image $F_{g,\hat{g}}(\varphi)$, unless there is a specific distinction. Notice that as a result of the two points of the previous Proposition we have that $D_{g}$ is invertible if the Yamabe invariant of $(M,g)$ is positive.\\
We can define now the (unbounded) operator $|D_{g}|^{s}:L^{2}(\Sigma_{g}M)\to L^{2}(\Sigma_{g}M)$, for $s>0$ by $$|D_{g}|^{s}\psi =\sum_{k\in \Z} |\lambda_{k}|^{s}a_{k}\varphi_{k},$$
for $\psi=\sum_{k\in \Z}a_{k}\varphi_{k}$. The Sobolev space $H^{\frac{1}{2}}(\Sigma_{g} M)$ is then defined by
$$H^{\frac{1}{2}}(\Sigma_{g} M):=\{\psi \in L^{2}(\Sigma_{g} M); |D_{g}|^{\frac{1}{2}}\psi \in L^{2}(\Sigma_{g} M)\}.$$
This function space is equivalent to the classical $H^{\frac{1}{2}}$-Sobolev space and will be endowed with the inner product $\langle \cdot, \cdot \rangle_{\frac{1}{2}}$ defined by
$$\langle \psi,\phi \rangle_{\frac{1}{2}}:=\int_{M}\langle |D_{g}|^{\frac{1}{2}}\psi,|D_{g}|^{\frac{1}{2}}\phi \rangle \ dv_{g}, \forall \psi, \phi \in H^{\frac{1}{2}}(\Sigma_{g} M).$$
Notice that this inner product defines a natural semi-norm on $H^{\frac{1}{2}}(\Sigma_{g} M)$ by setting
$$\|\psi \|_{\frac{1}{2}}:=\||D_{g}|^{\frac{1}{2}}\psi\|_{L^{2}}.$$
This semi-norm becomes a norm when $D_{g}$ is invertible. Using the spectral resolution of $D_{g}$, we can split the space $H^{\frac{1}{2}}(\Sigma_{g} M)$ in a convenient way that fits our analysis. That is, we can write 
\begin{equation}\label{split}
H^{\frac{1}{2}}(\Sigma_{g} M)=H^{\frac{1}{2},-}\oplus H^{\frac{1}{2},0}\oplus H^{\frac{1}{2},+} \; ,
\end{equation}
with
$$H^{\frac{1}{2},-}:=\overline{\text{span}\{\varphi_i\}_{i<0}},\quad
H^{\frac{1}{2},0}:=\ker D_{g}, \quad
H^{\frac{1}{2},+}:=\overline{\text{span}\{\varphi_i\}_{i>0}}.$$
This leads to the natural projectors $P^{\pm}:H^{\frac{1}{2}}(\Sigma_{g}M) \to H^{\frac{1}{2},\pm}$ and for $\psi \in H^{\frac{1}{2}}(\Sigma_{g}M)$ we will write $\psi^{+}:=P^{+}\psi$ and $\psi^{-}:=P^{-}\psi$. 
Since we will be considering a linear perturbation of the Dirac operator, we introduce the following operator $D_{\lambda}:=D_{g}-\lambda$, for $\lambda \not \in Spec (D_{g})$. Notice that $D_{\lambda}$ has a similar spectral decomposition and hence we can introduce a similar adapted splitting as in $(\ref{split})$, for the space $H^{\frac{1}{2}}$. That is:
$$H^{\frac{1}{2}}(\Sigma_{g} M)=H^{-}_{\lambda}\oplus H^{+}_{\lambda} \; ,$$
The new adapted inner product and norm are then defined by
$$\langle \psi, \phi\rangle _{\lambda}=\int_{M}\langle |D_{\lambda}|^{\frac{1}{2}}\psi, |D_{\lambda}|^{\frac{1}{2}}\phi\rangle\ dv_{g} \quad \text{ and } \quad \|\psi\|_{\lambda}=\||D_{\lambda}|^{\frac{1}{2}}\psi\|_{L^{2}}, \forall \psi, \phi \in H^{\frac{1}{2}}(\Sigma_{g}M).$$

\noindent
We recall now some of the properties of the conformal fractional Laplacian and GJMS operators. A good reference for the material discussed in this paragraph is \cite{Mar}. For this purpose, we consider a Poincar\'{e}-Einstein manifold $(X,g^{+})$ with conformal infinity $(M, [g])$. Therefore, there exists a geodesic defining function $\rho$ such that in a neighborhood of $M$ in $X$ of the form $M\times (0,\varepsilon)$, the metric $g^{+}$ takes the form $$g^{+}=\frac{1}{\rho^{2}}(d\rho^{2}+g_{\rho}),$$
where $g_{\rho}$ is a one parameter family of metrics on $M$ such that $g_{0}=g$. Moreover, we have $Ric_{g^{+}}=-ng_{+}$. In fact, one can weaken this last Einstein equality to be up to a term of the form $O(\rho^{n-2})$ if $n$ is even and up to a term of $O(\rho^{\infty})$, if $n$ is odd. One then can solve (even formally) the following generalized eigenvalue problem: for $s\in (0,\frac{n}{2})$ and $s\not \in \N$, and $u\in C^{\infty}(M)$
$$\left\{\begin{array}{ll}
-\Delta_{g^{+}}U-(\frac{n}{2}+s)(\frac{n}{2}-s)U=0 \text{ in } X\\
\\
U=\rho^{\frac{n}{2}-s}F(\rho)+\rho^{\frac{n}{2}+s}G(\rho);
\end{array}
\right.$$
where $F,G \in C^{\infty}(\overline{X},\overline{g}:=\rho^{2}g^{+})$ and $F_{|\rho=0}=u$. For the details about the construction of such solution, we refer the reader to \cite{GZ}. The operator $S(s): u\mapsto G_{|\rho=0}$ is called the scattering operator. The fractional conformal Laplacian is then defined by
$$P_{g}^{s}u:=d_{s}S(s)u, \quad \text{ where } d_{s}=2^{2s}\frac{\Gamma(s)}{\Gamma(-s)}.$$
The properties of the operator $P_{g}^{s}$ can be summarized as follows:
\begin{proposition}[\cite{Chang-Gonzalez,Mar,GZ}]
Using the definition above, we have:
\begin{itemize}
\item $P_{g}^{s}$ is a self-adjoint pseudo-differential operator on $M$ with principal symbol coinciding with the one of $(-\Delta_{g})^{s}$. 
\item $P_{g}^{s}$ is a conformally covariant operator. That is, if $\hat{g}=u^{\frac{4}{n-2s}}g$ then 
$$P_{\hat{g}}^{s}(\cdot)=u^{-\frac{n+2s}{n-2s}}P_{g}^{s}(u\cdot) .$$
\item $(P_{g}^{s})_{s\in (0,\frac{n}{2})}$  constitutes a meromorphic family of operators that has potential simple poles when $s\in \N$. These poles are compensated by the normalization constant $d_{s}$, making the family then holomorphic.
\item When $M$ is the Euclidean space $\R^{n}$, we have $P_{g_{\R^{n}}}^{s}=(-\Delta_{\R^{n}})^{s}$.
\end{itemize}
\end{proposition}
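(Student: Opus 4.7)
The plan is to derive all four properties from the scattering-theoretic definition $P_g^s u := d_s S(s) u$ of \cite{GZ}, as repackaged in \cite{Chang-Gonzalez} and \cite{Mar}. The whole argument rests on the Mazzeo--Melrose meromorphic extension of the resolvent of $-\Delta_{g^+} - (\frac{n}{2})^2 + s^2$ on the Poincar\'e--Einstein filling $(X, g^+)$, so I would first recall that construction: for $s \in (0,\frac{n}{2})$ outside a discrete exceptional set, the generalized eigenvalue problem in the statement admits a unique solution $U$ with the prescribed polyhomogeneous expansion, and $S(s)\colon u \mapsto G|_{\rho=0}$ is well-defined.

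For the first bullet, I would read $S(s)$ off the Schwartz kernel of the resolvent, which lives in Mazzeo's 0-calculus on $\overline{X}$; restricting that kernel to the diagonal of the boundary produces a classical pseudodifferential operator on $M$ of order $2s$. Computing its principal symbol on the hyperbolic model at a boundary point gives $|\xi|^{2s}$, matching that of $(-\Delta_g)^s$ after multiplication by the real scalar $d_s$. Self-adjointness then follows from Green's identity applied to two scattering solutions, equivalently from the symmetry of the Poisson kernel for real $s \in (0,\frac{n}{2})$.

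For conformal covariance, the essential point is that $g^+$ depends only on the conformal class at infinity, while a change of representative $\hat g = u^{4/(n-2s)} g$ induces a new geodesic defining function $\hat\rho$ with $\hat\rho/\rho \to u^{2/(n-2s)}$ on $M$. Writing a scattering solution for $\hat g$ as $\hat U = (\rho/\hat\rho)^{\frac{n}{2}-s} U$ and carefully tracking both the leading coefficient ($\hat u$) and the subleading coefficient ($S(s)$ applied to the datum) through this rescaling yields the stated covariance law. For the meromorphy bullet, Mazzeo--Melrose gives that the resolvent, and therefore $S(s)$, is meromorphic in $s$; in $(0,\frac{n}{2})$ the only obstruction to holomorphy occurs at integer $s = k$, where the indicial exponents $\frac{n}{2}\pm s$ differ by a nonzero integer and the formal power-series construction of $U$ fails at order $2k$. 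The residue of $S(s)$ at $s=k$ is then a local differential operator on $M$ with a factor proportional to the pole of $\frac{1}{\Gamma(-s)}$; since $d_s = 2^{2s}\Gamma(s)/\Gamma(-s)$ has a simple zero of exactly that order at $s=k$, multiplication cancels the singularity and makes $P_g^s$ holomorphic.

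For the Euclidean case, I would take the upper half-space $\R^n \times (0,\infty)$ with $g^+ = \rho^{-2}(d\rho^2 + g_{\R^n})$ as the canonical Poincar\'e--Einstein filling. Applying the partial Fourier transform in $x \in \R^n$ reduces the scattering equation to a modified Bessel ODE in $\rho$, whose unique decaying solution is proportional to $\rho^{n/2} K_s(\rho|\xi|)$; the standard small-argument expansion of $K_s$ reads off $F(0) = u$ and $G(0) = c_s |\xi|^{2s} \hat u$ for an explicit $c_s$, and the identity $d_s c_s = 1$ is exactly the Gamma-function identity that matches $(-\Delta_{\R^n})^s$. The main obstacle I anticipate is the residue computation in the meromorphy step, where one has to identify the precise Gamma factor produced by the blocked formal series with the zero of $d_s$; the other three bullets reduce to symbol calculus on hyperbolic model spaces, bookkeeping of defining-function changes, and an explicit Fourier--Bessel calculation that can be imported from \cite{Chang-Gonzalez,Mar} without reproof.
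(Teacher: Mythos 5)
The paper offers no proof of this proposition---it is imported verbatim from \cite{GZ}, \cite{Chang-Gonzalez} and \cite{Mar}---and your sketch is a faithful, correct outline of exactly the scattering-theoretic arguments those references use: the Mazzeo--Melrose resolvent and $0$-calculus for the pseudodifferential structure, symbol and self-adjointness; the change of geodesic defining function for conformal covariance; the cancellation of the simple poles of $S(s)$ at integer $s$ (where the indicial roots differ by an even integer and the formal series obstructs, producing the GJMS operators as residues) by the zeros of $d_{s}=2^{2s}\Gamma(s)/\Gamma(-s)$; and the Fourier--Bessel computation with $\rho^{n/2}K_{s}(\rho|\xi|)$ on the half-space model for the Euclidean case. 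The numerology also checks out (with $e^{\omega}=u^{2/(n-2s)}$ one gets $e^{(\frac{n}{2}-s)\omega}=u$ and $e^{-(\frac{n}{2}+s)\omega}=u^{-\frac{n+2s}{n-2s}}$, which is the stated covariance law), the only cosmetic caveat being that under a change of conformal representative the interior solution $U$ is unchanged and it is merely the reading-off of the coefficients $F,G$ relative to $\hat\rho$ rather than $\rho$ that transforms, which is what your rescaling formula encodes.
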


\noindent
Notice that $P_{g}^{s}$ is a non-local operator when $s\not \in \N$. But when $s=k$ is an integer, then $P_{g}^{k}$ is a differential operator and it coincides with the classical GJMS operators \cite{GJMS}. In fact, one can check that
$$P_{g}^{1}=L_{g}:= -\Delta_{g}+\frac{n-2}{4(n-1)}R_{g},$$
and
$$P_{g}^{2}=-\Delta_{g}^{2}+div(a_{n}R_{g}g+b_{n}Ric_{g})d+\frac{n-4}{2}Q_{2},$$
where $a_{n}$ and $b_{n}$ are two constants depending on $n$ and $Q_{2}$ is, up to a multiplicative constant, the classical Q-curvature. In a similar way, one can define the fractional Q-curvature by:
$$Q_{g}^{s}:=\frac{P_{g}^{s}(1)}{(n-2s)}.$$
For example, $Q_{g}^{1}=\frac{R_{g}}{4(n-1)}$. We will restrict ourselves to the case $0<2s<n$. One now can formulate the fractional Yamabe problem, which addresses the question of prescribing constant $Q_{g}^{s}$-curvature. This is equivalent to solving the problem of finding $u>0$ such that
\begin{equation}\label{yams}
P_{g}^{s}u=cu^{\frac{n+2s}{n-2s}}.
\end{equation}
As in the classical Yamabe problem, the sign of the constant $c$ is a conformal invariant and it is determined by the sign of  $\int_{M}Q_{g}^{s}\ dv_{g}$. We will focus on the positive case, that is, when $\int_{M}Q_{g}^{s}\ dv_{g}>0$. We consider then the functional $I_{s}:[g]\to \R$ defined by
$$I_{s}(h):=\frac{\int_{M}Q_{h}^{s} \ dv_{h}}{\Big(\int_{M}dv_{h}\Big)^{\frac{n-2s}{n}}}.$$
Taking $h:=u^{\frac{4}{n-2s}}g$ yields
$$I_{s}(u,g):=I_{s}(h)=\frac{\int_{M}uP_{g}^{s}u\ dv_{g}}{\Big(\int_{M}u^{\frac{2n}{n-2s}}\ dv_{g}\Big)^{\frac{n-2s}{n}}}.$$
Therefore, finding a critical point of $I_{s}$ is equivalent to finding a solution to $(\ref{yams})$. We can then define the $s$-Yamabe constant by
\begin{equation}\label{yamc}
Y_{s}(M,[g]):=\inf\{I_{s}(h); h\in [g]\}=\inf\{ I_{s}(u,g); u>0 \text{ and } u\in H^{s}(M)\}.
\end{equation}
Notice that when $Y_{s}(M,[g])>0$ (as in the case of $(S^{n},[g_{0}])$, one can define an equivalent $H^{s}$-norm on $M$ by setting 
$$\|u\|_{H^{s}}:=\Big(\int_{M}uP_{g}^{s}u\ dv_{g}\Big)^{\frac{1}{2}}.$$
In this case, the best constant in the Sobolev embedding $H^{s}(M)\hookrightarrow L^{\frac{2n}{n-2s}}(M)$ coincides with $Y_{s}(M,[g])^{-\frac{1}{2}}$. We will assume from now on that the Green's function of $P_{g}^{s}$ is positive. This is not a necessary condition but it does make the notations in the proofs easier. In fact, there are several conformally invariant assumptions that we can consider if we truly need the positivity of the $G_{g}^{s}$. We refer the reader to \cite{CC} where the authors address the positivity of the Green's function in certain ranges of the parameter $s$.
We point out that when $Y_{s}(M,[g])>0$ and $G^{s}_{g}$ is the Green's function of $P_{g}^{s}$, then for any $f\in C^{\infty}(M)$, we have
$$\int_{M\times M}G_{g}^{s}(x,y)f(x)f(y)\ dv_{g}(y)dv_{g}(x)\geq 0.$$
We summarize here some of the useful properties of the $G^{s}_{g}$ that we will be using in the next sections.
\begin{proposition}\label{propgreen}
We consider a compact Riemannian manifold $(M,g)$ as above and fix $ 0<s<\frac{n}{2}$. Assume that $Y_{s}(M,[g])>0$, then the Green's function of $P_{g}^{s}$ satisfies:
\begin{itemize}
\item[i)] $G_{g}^{s}$ is continuous and bounded away from the diagonal $\triangle_{M\times M}:=\{(x,x)\in M\times M\}$.
\item[ii)] For $p_{0}\in M$ there exists a small neighborhood $U_{p_{0}}$ around $p_{0}$ in $M$ such that in normal coordinates around $p_{0}$,
$$G_{g}^{s}(x,y)=G_{g_{\R^{n}}}^{s}(x,y)+r(x,y), \forall x,y\in U_{p_{0}},$$
where $G_{g_{\R^{n}}}^{s}(x,y)=\frac{c_{n,s}}{|x-y|^{n-2s}}$ is the Green's function of $(-\Delta_{\R^{n}})^{s}$ and there exists $C>0$ such that
$$|r(x,y)|\leq \frac{C}{|x-y|^{n-2s-1}}, \forall x,y\in U_{p_{0}}.$$
\end{itemize}
\end{proposition}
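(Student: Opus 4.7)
The plan is to exploit the positivity hypothesis to bring in pseudodifferential calculus. Since $Y_{s}(M,[g])>0$, the operator $P_{g}^{s}$ is a positive self-adjoint classical elliptic pseudodifferential operator of order $2s$ on the closed manifold $M$, so it is invertible. Its inverse $(P_{g}^{s})^{-1}$ is then a classical pseudodifferential operator of order $-2s$, and $G_{g}^{s}$ is precisely its Schwartz kernel. Since Schwartz kernels of classical pseudodifferential operators are smooth off the diagonal, $G_{g}^{s}$ is continuous on $(M\times M)\setminus \Delta_{M\times M}$. Boundedness on the complement of any neighborhood of the diagonal then follows by compactness of $M\times M$.

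\textbf{Plan for part (ii).} The idea is a local parametrix construction in normal coordinates. Since $g_{ij}(x)=\delta_{ij}+O(|x|^2)$ in normal coordinates at $p_{0}$, and since $P_{g}^{s}$ has the same principal symbol as $(-\Delta_{g})^{s}$ (by the Chang–Gonz\'alez–Mar description recalled above), one can write, on a suitable neighborhood $U_{p_{0}}$,
\[
P_{g}^{s}=(-\Delta_{\mathbb{R}^{n}})^{s}+Q,
\]
where $Q$ is a pseudodifferential operator of order at most $2s-1$ (modulo smoothing). Choose $\eta\in C_{c}^{\infty}(U_{p_{0}})$ with $\eta\equiv 1$ near $p_{0}$, and set $E_{0}(x,y)=c_{n,s}|x-y|^{-(n-2s)}$, the Euclidean Green function satisfying $(-\Delta_{\mathbb{R}^{n}})^{s}E_{0}(\cdot,y)=\delta_{y}$. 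Applying $P_{g}^{s}$ to $\eta(\cdot)E_{0}(\cdot,y)$ produces
\[
P_{g}^{s}\bigl(\eta(\cdot)E_{0}(\cdot,y)\bigr)=\delta_{y}+R(\cdot,y),
\]
where $R$ collects the contribution from $Q$ applied to the Riesz kernel $E_{0}$ together with commutator terms $[P_{g}^{s},\eta]$. A direct symbol estimate gives $|R(x,y)|\leq C|x-y|^{-(n-1)}$ near the diagonal, while $R(x,y)$ stays bounded away from it.

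Setting $r(x,y):=G_{g}^{s}(x,y)-\eta(x)E_{0}(x,y)$, one has $P_{g}^{s}r(\cdot,y)=-R(\cdot,y)$, and inverting against the kernel $G_{g}^{s}$ itself yields
\[
r(x,y)=-\int_{M}G_{g}^{s}(x,z)\,R(z,y)\,dv_{g}(z).
\]
I would then combine the principal bound $G_{g}^{s}(x,z)\leq C|x-z|^{-(n-2s)}$ (from the order $-2s$ kernel asymptotics sitting behind part (i)) with the Riesz composition identity
\[
\int_{\mathbb{R}^{n}}|x-z|^{-(n-2s)}|z-y|^{-(n-1)}\,dz\leq C|x-y|^{-(n-2s-1)},
\]
valid for $2s+1<n$, to obtain the claimed bound. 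On the smaller neighborhood where $\eta\equiv 1$ this gives exactly the decomposition $G_{g}^{s}=E_{0}+r$ with $|r(x,y)|\leq C|x-y|^{-(n-2s-1)}$.

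\textbf{Main obstacle.} The subtle step is justifying the decomposition $P_{g}^{s}=(-\Delta_{\mathbb{R}^{n}})^{s}+Q$ with $Q$ of order at most $2s-1$. Because $P_{g}^{s}$ is non-local and built from the scattering problem on a Poincar\'e--Einstein filling, this gain of one order over the principal symbol is not visible from the definition; it requires invoking the symbolic analysis in \cite{Chang-Gonzalez,Mar} and carefully tracking the contributions of both the metric expansion and the subprincipal part of the scattering operator. Once this symbolic reduction is granted, the remaining Riesz-potential estimates are standard. The borderline case $2s+1=n$ would need separate (logarithmic) treatment, but in the conformally critical regime $2s=n-2$ we have $2s+1=n-1<n$, so the argument above applies directly.
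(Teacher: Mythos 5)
The paper does not prove Proposition~\ref{propgreen}: it is stated as a recalled fact, with the relevant references being \cite{CC}, \cite{Chang-Gonzalez}, \cite{Mar}, \cite{GZ}. So there is no ``paper's proof'' to match; I can only assess whether your argument would actually establish the statement.

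Your treatment of part (i) is fine: $P_{g}^{s}$ is a classical, elliptic, self-adjoint pseudodifferential operator of order $2s$ on a closed manifold, positivity plus ellipticity gives invertibility, $(P_{g}^{s})^{-1}$ is a classical PDO of order $-2s$, and kernels of classical PDOs are smooth off-diagonal.

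Part (ii) has a genuine gap at the step you yourself flag as the ``main obstacle'', but the problem is sharper than you describe. You assert a decomposition $P_{g}^{s}=(-\Delta_{\mathbb{R}^{n}})^{s}+Q$ with $Q$ of order at most $2s-1$. This is not true, even for $(-\Delta_{g})^{s}$ in place of $P_{g}^{s}$. The principal symbol of the difference is $\bigl(g^{ij}(x)\xi_{i}\xi_{j}\bigr)^{s}-|\xi|^{2s}$. In normal coordinates this equals $a(x)|\xi|^{2s}+\text{lower order}$ with $a(x)=O(|x|^{2})$, which is still an order-$2s$ symbol: the coefficient vanishes quadratically at $p_{0}$, but vanishing of the coefficient at a point does not lower the pseudodifferential order. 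When you push this through the parametrix argument, the error term $R(x,y)$ does not satisfy $|R(x,y)|\lesssim |x-y|^{-(n-1)}$; instead it carries a contribution of the form $a(y)\,\delta_{y}$ plus a kernel of size $O\bigl((|x|^{2}+|y|^{2})\,|x-y|^{-(n-2s)}\bigr)$. Tracing this into $r(x,y)=G_{g}^{s}(x,y)-c_{n,s}|x-y|^{-(n-2s)}$ yields $|r(x,y)|\lesssim (|x|^{2}+|y|^{2})\,|x-y|^{-(n-2s)}+|x-y|^{-(n-2s-1)}$. The first term is \emph{not} bounded by $C|x-y|^{-(n-2s-1)}$ uniformly over $x,y\in U_{p_{0}}$: take $|x|,|y|$ comparable to the radius of $U_{p_{0}}$ and $|x-y|\to 0$. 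This is the same phenomenon one sees for variable-coefficient second-order elliptic operators (Hadamard parametrix): the leading singularity at $(x,y)$ is governed by the metric at $x$ and $y$, not at $p_{0}$, and this produces an $O\bigl(|\mathrm{dist}(y,p_{0})|^{2}\bigr)$ multiplicative correction to the leading Riesz term. So the Riesz-composition estimate at the end, while correct as an inequality, cannot be reached from the ingredients you set up: the intermediate bound $|R(x,y)|\lesssim|x-y|^{-(n-1)}$ does not hold. To repair this one either needs a different, sharper fact about the parametrix of the scattering operator (for instance an expansion around the point $y$, in which case the leading term is $c_{n,s}\bigl(g_{ij}(y)(x-y)^{i}(x-y)^{j}\bigr)^{-(n-2s)/2}$ rather than $c_{n,s}|x-y|^{-(n-2s)}$), or a modified conclusion that records the extra $O\bigl((|x|^{2}+|y|^{2})\,|x-y|^{-(n-2s)}\bigr)$ term.
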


\section{Regularity}

\noindent
In this section, we will focus on the study of regularity of solutions of $(\ref{eq})$, actually the same results hold for $(\ref{eq2})$. In the sequel, for the sake of simplicity, we will omit the dependence on the metric; for instance $\Sigma M = \Sigma_g M$ and so on. For the same reason, we will denote the functional spaces depending only $M$; for instance $L^p(M)= L^p(M,\Sigma M)$. The $H^{\frac{1}{2}}$-norm will also be denoted simply by $\|\cdot\|$.  \\
Our objective here is to show that weak solutions of $(\ref{eq})$ are indeed classical solutions. First of all, we consider the Sobolev space $H^{\frac{1}{2}}(M)$ as defined in the previous section. Here we just recall that there exists a continuous Sobolev embedding
$$H^{\frac{1}{2}}( M) \hookrightarrow L^p( M), \quad 1\leq p \leq  \frac{2n}{n-1},$$
this is also compact if $1\leq p <\frac{2n}{n-1}$.

\noindent
We will say that $\psi \in L^{\frac{2n}{n-1}}(M)$ is a weak solution of $(\ref{eq})$ if
$$ \int_M \langle D\phi,\psi\rangle \ dv =  \int_M (G^{s}*|\psi|^{2}) \langle \phi,\psi\rangle \ dv,$$
for all $\phi \in C^{\infty}(M)$. Notice that for a fixed metric $g$, the critical points of $J_{g}$ are weak solutions of $(\ref{eq})$. We then have the following result: 

\begin{theorem}\label{reg}
Let $\psi \in L^{\frac{2n}{n-1}}(M)$ be a weak solution of $(\ref{eq})$. Then $\psi \in C^{\infty}(M)$.
\end{theorem}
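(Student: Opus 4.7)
The plan is a classical bootstrap, iterating between the Dirac equation for $\psi$ and the nonlocal equation $P_g^s V = |\psi|^2$ defining the potential $V := G^s * |\psi|^2$. Starting from $\psi \in L^{\frac{2n}{n-1}}(M)$, one has $|\psi|^2 \in L^{\frac{n}{n-1}}(M)$. By Proposition~\ref{propgreen}(ii), $G^s(x,y)$ behaves near the diagonal like $c_{n,s}|x-y|^{-(n-2s)} = c_{n,s}|x-y|^{-2}$ (since $2s = n-2$), with a bounded remainder; together with a partition of unity and the Hardy-Littlewood-Sobolev inequality this yields $V \in L^n(M)$. Unfortunately, this is the critical integrability for the equation $D\psi = V\psi$: a direct Hölder-plus-Sobolev iteration (invert $D$ on $V\psi \in L^{\frac{2n}{n+1}}$, then embed $W^{1,\frac{2n}{n+1}} \hookrightarrow L^{\frac{2n}{n-1}}$) brings $\psi$ back to $L^{\frac{2n}{n-1}}$, with no gain possible by purely linear estimates. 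This stagnation reflects the conformal invariance of the equation stressed in the introduction.

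To break through the critical exponent I would apply a Brezis-Kato type argument. The $L^n$-mass of $V$ is absolutely continuous, so for any $\epsilon>0$ one can split $V = V_L + \tilde V_L$ with $V_L$ bounded by a constant depending on $L$ and $\|\tilde V_L\|_{L^n} < \epsilon$. Multiplying the equation by a truncated power test spinor (rigorously, something of the form $|\psi|_T^{q-2}\psi$ with $|\psi|_T = \min(|\psi|,T)$), and invoking a Kato-type inequality for $D$ to reduce to a scalar estimate for $|\psi|$, one arrives at an inequality of the form
\begin{equation*}
\|\,|\psi|^{q/2}\|_{H^{1/2}}^{2} \;\leq\; C_L\,\|\psi\|_{L^q}^{q} \;+\; \epsilon\, C_{\mathrm{Sob}}\,\|\,|\psi|^{q/2}\|_{H^{1/2}}^{2}.
\end{equation*}
Choosing $\epsilon$ small and absorbing the last term, then letting $T\to\infty$, yields $\psi \in L^q(M)$ for every $q<\infty$.

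With $|\psi|^{2}\in L^{p}$ for every $p<\infty$, in particular for some $p>n/2$, a further application of HLS places $V \in L^{\infty}(M)$. The equation $D\psi = V\psi$ with $V\in L^\infty$ then gives, via standard $L^p$-elliptic estimates for the first-order operator $D$, $\psi \in W^{1,p}(M)$ for every $p<\infty$, hence $\psi \in C^{0,\alpha}(M)$ by Morrey's embedding. A Schauder bootstrap closes the proof: $|\psi|^2 \in C^{0,\alpha}$ upgrades $V$ to higher Hölder regularity through the pseudo-differential mapping properties of $(P_g^s)^{-1}$ (equivalently, the local structure of $G^s$ from Proposition~\ref{propgreen}), then Schauder-type estimates for $D$ upgrade $\psi$; alternating the two steps produces $\psi \in C^{\infty}(M)$.

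The main obstacle is the critical Brezis-Kato jump. Naive iteration (Dirac inversion, Sobolev embedding, HLS, back to Dirac) stagnates exactly at $L^{\frac{2n}{n-1}}$ because the equation is conformally critical, so one must exploit the equi-integrability of $V$ in $L^n$ together with the sharp Sobolev constant to gain a first sliver of integrability; everything before (obtaining $V \in L^n$) and after (bootstrap to $C^\infty$) is comparatively routine. The only spin-geometric subtlety is the use of a Kato inequality for $D$ to replace the expression $|\psi|^{q-2}\psi$ — not intrinsically defined for spinor-valued $\psi$ — by a scalar estimate on $|\psi|$.
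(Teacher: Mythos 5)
Your proposal follows a genuinely different route from the paper. The paper uses a localization and operator--invertibility argument in the style of Isobe~\cite{I}: it writes $G^{s}*|\psi|^{2}=u_{1}+u_{2}$ with $u_{1}=G^{s}*(\eta_{1}|\psi|^{2})$ supported near a small ball, observes that $\|u_{1}\|_{L^{n}}$ is small when the ball radius $r$ is small (the same absolute-continuity-of-the-$L^{n}$-mass you invoke), and deduces that the perturbed operator $D-u_{1}\cdot$ remains invertible both as $W^{1,p}\to L^{p}$ ($\tfrac{2n}{n+1}\le p<n$) and as $L^{\frac{2n}{n-1}}\to W^{-1,\frac{2n}{n-1}}$; uniqueness of the solution in the two scales then forces $\eta_{2}\psi\in W^{1,p}$ for all $p<n$, so $\psi\in L^{q}$ for every $q$ \emph{in a single step}, with a routine bootstrap afterwards. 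Your plan instead goes via a Brezis--Kato/Moser iteration, gaining one Sobolev notch per pass. Both methods hinge on the same underlying phenomenon — the critical potential $V\in L^{n}$ can be split into a small critical part and a harmless bounded part — but the paper converts smallness into operator-norm smallness, while you convert it into absorbability in an energy estimate. The paper's route is cleaner for first-order operators; yours is the more classical scalar technique.

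There is, however, a real gap in the execution of the Brezis--Kato step as you have written it. Testing the \emph{first-order} equation $D\psi=V\psi$ against the truncated power $|\psi|_{T}^{q-2}\psi$ does not produce a coercive quantity: one computes (using that Clifford multiplication by a vector is skew-Hermitian, so $\langle X\cdot\psi,\psi\rangle$ is purely imaginary) that
\[
\int_{M}\big\langle D\psi,\,|\psi|_{T}^{q-2}\psi\big\rangle\,dv
=\int_{M}|\psi|_{T}^{q-2}\,\langle D\psi,\psi\rangle\,dv,
\]
and $\langle D\psi,\psi\rangle$ has no sign, so there is no $\|\,|\psi|^{q/2}\|_{H^{1/2}}^{2}$ to be extracted on the left. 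Nor is there a pointwise "Kato-type inequality for $D$'' in the sense you invoke — the Dirac operator maps spinors to spinors and is not nonnegative, so no scalar subequation for $|\psi|$ comes directly from $D$. The correct version of your idea must pass through the Schr\"odinger--Lichnerowicz formula $D^{2}=-\Delta+\tfrac{R}{4}$. Differentiating the equation gives $D^{2}\psi=(\nabla V)\cdot\psi+V^{2}\psi$; taking the real inner product with $\psi$ the Clifford term drops out (again by skew-hermiticity), and the classical scalar Kato inequality yields, distributionally,
\[
-\Delta|\psi|\;\le\;\Big(V^{2}-\tfrac{R}{4}\Big)|\psi|,
\qquad V^{2}\in L^{n/2}(M),
\]
which is exactly the Brezis--Kato setup for a \emph{second}-order operator. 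The iteration then runs in $H^{1}$, not $H^{1/2}$: one bounds $\|\nabla(|\psi|^{q/2})\|_{L^{2}}^{2}$ and uses the embedding $H^{1}\hookrightarrow L^{2n/(n-2)}$, gaining a factor $n/(n-2)$ in the integrability exponent per pass. (Note also that $\nabla V\in L^{n/2}$ by Proposition~\ref{propgreen} and the Hardy--Littlewood--Sobolev inequality, which would be problematic as a coefficient in the $D^{2}$-equation — it is fortunate and essential that this term contributes nothing to the scalar subequation.) With these corrections your argument closes, and the rest of your outline — from $V\in L^{\infty}$ through Schauder — is sound. But as stated, with $H^{1/2}$ norms and a first-order Kato inequality, the pivotal absorption step is not justified.
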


\noindent
The idea of the proof is somehow similar to that in \cite{I} (see also \cite{MaalaouiMartino2019}), but we will provide here the full details since the non-linearity, in this case, is non-local.
\begin{proof}
Given a small $r>0$, we consider two cut-off functions $\eta_{1}$ and $\eta_{2}$ such that $\eta_{1}$ is supported in $B_{3r}$ and equals $1$ on $B_{2r}$. Similarly, $\eta_{2}=1$ on $B_{\frac{r}{2}}$ and supported in $B_{r}$. Now, one has
\begin{equation}\label{eqreg}
D(\eta_{2}\psi)=(G^{s}*|\psi|^{2})\eta_{2}\psi +\nabla\eta_{2}\cdot \psi.
\end{equation}
On the other hand, we will write
\begin{equation}\label{eqreg2}
G^{s}*|\psi|^{2}=G^{s}*(\eta_{1}|\psi|^{2} + (1-\eta_{1})|\psi|^{2})=u_{1}+u_{2},
\end{equation}
so that
$$D(\eta_{2}\psi)=u_{1}\eta_{2}\psi+\eta_{2}u_{2}\psi+\nabla\eta_{2}\cdot \psi.$$
Now, for $1\leq p<n$, let $P: W^{1,p}(M)\to L^{p}(M)$ defined by
$$Pv=u_{1}v.$$
We notice that
\begin{align}
\|u_{1}v\|_{L^{p}}&\leq \|u_{1}\|_{L^{n}}\|v\|_{L^{\frac{np}{n-p}}}\notag\\
&\leq C\|\psi\|_{L^{\frac{2n}{n-1}}(B_{3r})}^{2}\|v\|_{W^{1,p}(M)}.
\end{align}
Thus, we have
$$\|P\|_{Op}\leq C\|\psi\|_{L^{\frac{2n}{n-1}}(B_{3r})}^{2} \; ,$$
where $\|\cdot\|_{Op}$ stands for the operator norm. Since $D:W^{1,p}(M)\to L^{p}(M)$ is invertible, we have for $r$ small enough, that $D-P:W^{1,p}(M)\to L^{p}(M)$ is invertible. Noticing that $\nabla \eta_{2} \psi+\eta_{2}u_{2}\psi \in L^{\frac{2n}{n-1}}(M)$, there exists a unique solution $v_{0}\in W^{1,p}(M)$ of
$$Dv_{0}=u_{1}v_{0}+\nabla \eta_{2} \psi+\eta_{2}u_{2}\psi,$$
for all $1\leq p \leq \frac{2n}{n-1}$.\\
Similarly, we can consider the invertible operator $D:L^{\frac{2n}{n-1}}(M) \to W^{-1,\frac{2n}{n-1}}(M)$, and define
$$\tilde{P}:L^{\frac{2n}{n-1}}(M) \to W^{-1,\frac{2n}{n-1}}(M)$$
by
$$\tilde{P}\tilde{v}=u_{1}\tilde{v} \; .$$
We see that in this case, we have
$$\|\tilde{P}v\|_{L^{\frac{2n}{n+1}}}\leq \|u_{1}\|_{L^{n}}\|v\|_{L^{\frac{2n}{n-1}}}.$$
Therefore, since $L^{\frac{2n}{n+1}}(M)\hookrightarrow  W^{-1,\frac{2n}{n-1}}(M)$, we have 
$$\|\tilde{P}\|_{Op}\leq C\|\psi\|_{L^{\frac{2n}{n-1}}(B_{3r})}^{2}.$$
For the same reason as above, there exists a unique solution $\tilde{v}_{0}\in L^{\frac{2n}{n-1}}(M)$, of
$$Dv=u_{1}v+\nabla \eta_{2}\cdot \psi+\eta_{2}u_{2}\psi.$$
Therefore, since $W^{1,p}(M)\hookrightarrow L^{\frac{2n}{n-1}}(M)$, for $\frac{2n}{n+1}\leq p < n$, we have that
$$v_{0}=\tilde{v}_{0}=\eta_{2}\psi \in W^{1,p}(M), \frac{2n}{n+1} \leq p < n.$$
Thus, $\psi \in W^{1,p}(M)$ for $\frac{2n}{n+1}\leq p <n$, in particular, $\psi \in L^{p}(M)$ for all $p\geq 1$. Therefore, by the elliptic regularity for $D$ and a standard bootstrap argument, we have that $\psi \in C^{\infty}(M)$.
\end{proof}

\section{Bubbling and energy quantization}

\noindent
In this section, we will analyze the behaviour of Palais-Smale sequences for $J_{g}$. This type of asymptotic study is quite standard when dealing with concentration phenomena (see for instance the books \cite{stru08, drhero04}); in particular, for our equation, the estimates that we will need are analogous to those in \cite{MaalaouiMartino2019} (Section 4; for the Dirac-Einstein problem, in dimension three) and in \cite{I} (Section 5, for the pure Dirac operator, in any dimension). We start with the first result. 

\begin{lemma}\label{lem:PSbounded}
Let $(\psi_{k})\subseteq H^{\frac{1}{2}}(M)$ be a (PS) sequence for $J_{g}$. Then $(\psi_{k})$ is bounded.
\end{lemma}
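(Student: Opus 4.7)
The plan is to exploit the variational structure of $J_g$ together with the spectral splitting $H^{\frac{1}{2}}(\Sigma M) = H^{\frac{1}{2},+}\oplus H^{\frac{1}{2},-}$, which is well-defined since under the standing positive Yamabe hypothesis $\ker D_g = \{0\}$ and $\|\cdot\|$ is a genuine norm. Throughout, write $A(\psi) := \int_{M\times M}G_g^s(x,y)|\psi(x)|^2|\psi(y)|^2\, dv_g\, dv_g$ for the quartic nonlocal form and $\varepsilon_k := \|DJ_g(\psi_k)\|_*\to 0$.

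First I would test $DJ_g(\psi_k)$ against $\psi_k$ itself. Exploiting the $2$- and $4$-homogeneity of the two pieces of $J_g$, one has
$$A(\psi_k) = 4J_g(\psi_k) - 2\,DJ_g(\psi_k)[\psi_k] \leq 4|c| + 4 + 2\varepsilon_k\|\psi_k\|,$$
which bounds the nonlocal energy by a linear function of $\|\psi_k\|$. Next I would test $DJ_g(\psi_k)$ against the indefinite direction $\psi_k^+ - \psi_k^-$, which has $H^{\frac{1}{2}}$-norm exactly $\|\psi_k\|$ and satisfies $\int_M\langle D_g\psi_k,\psi_k^+-\psi_k^-\rangle\, dv_g = \|\psi_k\|^2$. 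This yields
$$\|\psi_k\|^2 \leq \left|\int_M (G_g^s*|\psi_k|^2)\langle\psi_k,\psi_k^+-\psi_k^-\rangle\, dv_g\right| + \varepsilon_k\|\psi_k\|.$$

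The nonlocal integral on the right is controlled via Cauchy--Schwarz with respect to the positive semi-definite bilinear form induced by $G_g^s(x,y)\, dv_g\otimes dv_g$ (the positivity of $G_g^s$ being part of the standing hypothesis of Section 2):
$$\left|\int_M (G_g^s*|\psi_k|^2)\langle\psi_k,\phi\rangle\, dv_g\right| \leq A(\psi_k)^{1/2}\left(\int_M (G_g^s*|\psi_k|^2)|\phi|^2\, dv_g\right)^{1/2},$$
and the second factor is in turn bounded, by Hardy--Littlewood--Sobolev combined with the Sobolev embedding $H^{\frac{1}{2}}(M)\hookrightarrow L^{2n/(n-1)}(M)$, by $C\|\psi_k\|\|\phi\|$. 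Substituting $\phi = \psi_k^+-\psi_k^-$ produces the key inequality
$$\|\psi_k\|\bigl(1 - C\,A(\psi_k)^{1/2}\bigr) \leq \varepsilon_k.$$

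The main obstacle is that the first step only bounds $A(\psi_k)$ linearly in $\|\psi_k\|$, so the coefficient $1 - CA(\psi_k)^{1/2}$ is not manifestly positive and the inequality above does not close directly. I would argue by contradiction: assume $\|\psi_k\|\to\infty$ along a subsequence, and pass to the normalized sequence $\widetilde\psi_k := \psi_k/\|\psi_k\|$. Combining the two displayed estimates forces $A(\widetilde\psi_k) = A(\psi_k)/\|\psi_k\|^4 \to 0$. Using the identification $A(f) = \int_M u P_g^s u\, dv_g$ for $u = (P_g^s)^{-1}(|f|^2)$, together with positivity of $P_g^s$ on $H^s(M)$ (guaranteed by $Y_s(M,[g])>0$), one has $A(f) = 0 \Leftrightarrow f \equiv 0$; combined with the compact embeddings $H^{\frac{1}{2}}(M) \hookrightarrow\hookrightarrow L^p(M)$ for $p<2n/(n-1)$ and Fatou's lemma applied to $G_g^s \geq 0$, a subsequential weak-$H^{\frac{1}{2}}$ and a.e.\ limit $\widetilde\psi_\infty$ satisfies $A(\widetilde\psi_\infty) \leq \liminf A(\widetilde\psi_k) = 0$ and hence vanishes identically. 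Passing to the limit in the normalized PS equation tested against $\widetilde\psi_k^+$ and $\widetilde\psi_k^-$ separately, and using that $\|\widetilde\psi_k^+\|^2 + \|\widetilde\psi_k^-\|^2 = 1$, delivers the required contradiction.
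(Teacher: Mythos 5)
Your framework is right---the identity $A(\psi_k)=4J_g(\psi_k)-2\,DJ_g(\psi_k)[\psi_k]$ with $A(\psi):=\int_M(G_g^s*|\psi|^2)|\psi|^2\,dv_g$, the spectral splitting, and the weighted-$L^2$ Cauchy--Schwarz step are all exactly the ingredients of the paper's proof. The place where the argument genuinely breaks is in how you bound the second factor $\left(\int_M(G_g^s*|\psi_k|^2)|\phi|^2\,dv_g\right)^{1/2}$. You estimate it via HLS and the $H^{1/2}\hookrightarrow L^{2n/(n-1)}$ embedding by $C\|\psi_k\|\,\|\phi\|$, which bounds $\|G_g^s*|\psi_k|^2\|_{L^n}$ by $C\|\psi_k\|^2$ --- an estimate that involves the very quantity you are trying to control. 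The paper instead uses the \emph{self-improving} elliptic bound
\[
\|G_g^s*|\psi_k|^2\|_{L^n}^2\leq C\,A(\psi_k),
\]
which follows by writing $u=G_g^s*|\psi_k|^2$, noting $\int_M u\,P_g^s u\,dv=A(\psi_k)$, and invoking the embedding $H^s\hookrightarrow L^{2n/(n-2s)}=L^n$ (here $2s=n-2$ is used crucially). Feeding this into the Cauchy--Schwarz and H\"older chain gives $\|\psi_k^\pm\|\leq C\,A(\psi_k)^{3/4}$; since the first step yields $A(\psi_k)=4c+o(\|\psi_k\|)$ and $3/4<1$, the estimate closes immediately via $(a+b)^{3/4}\leq a^{3/4}+b^{3/4}$.

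Your proposed fix by normalization and contradiction does not rescue the weaker bound. With $\widetilde\psi_k:=\psi_k/\|\psi_k\|$, testing the normalized equation against $\widetilde\psi_k^\pm$ and applying your Cauchy--Schwarz yields
\[
\|\widetilde\psi_k^\pm\|^2\leq\Big(\tfrac{A(\psi_k)}{\|\psi_k\|^2}\Big)^{1/2}\Big(\int_M(G_g^s*|\psi_k|^2)|\widetilde\psi_k^\pm|^2\,dv\Big)^{1/2}+o(1),
\]
and with the HLS bound the right-hand side is $\lesssim A(\psi_k)^{1/2}+o(1)$, where $A(\psi_k)\leq C+2\varepsilon_k\|\psi_k\|$. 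Since nothing forces $\varepsilon_k\|\psi_k\|$ to stay bounded (the PS hypothesis only gives $\varepsilon_k\to 0$), this need not tend to zero, so no contradiction with $\|\widetilde\psi_k^+\|^2+\|\widetilde\psi_k^-\|^2=1$ is obtained. The Fatou argument showing $\widetilde\psi_\infty=0$ is correct but does not help, because the unnormalized weight $G_g^s*|\psi_k|^2$ can still blow up in $L^n$ under your bound; only the elliptic estimate in terms of $A(\psi_k)$ tames it, at which point the direct argument works and the normalization detour becomes unnecessary.
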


\begin{proof}
Let $(\psi_{k})$ be a (PS) sequence for $J_{g}$, at level $c\in \R$. Then
$$J_{g}(\psi_{k})=c+o(1)$$
and
$$D_{g}\psi_{k}=(G_{g}^{s}*|\psi_{k}|^{2}) \psi +\varepsilon_{k},$$
with $\varepsilon_{k}\to 0$ in $H^{-\frac{1}{2}}(M)$. Now we notice that
\begin{equation}\label{eq:J gradJ}
2J_{g}(\psi_{k})-\langle \nabla J(\psi_{k}),\psi_{k}\rangle =\frac{1}{2}\int_{M}|(G_{g}^{s}*|\psi_{k}|^{2}) \psi_{k}|^{2} \ dv.
\end{equation}
Thus,
$$\int_{M} (G_{g}^{s}*|\psi_{k}|^{2}) |\psi_{k}|^{2} \ dv= 4c+o(\|\psi_{k}\|).$$
From the elliptic regularity and the Sobolev embeddings, there exists $C>0$ such that
$$\|G_{g}^{s}*|\psi_{k}|^{2}\|_{L^{n}}^{2}\leq C\int_{M} (G_{g}^{s}*|\psi_{k}|^{2}) |\psi_{k}|^{2} \ dv.$$
On the other hand,
\begin{align}
\|\psi^{+}_{k}\|^{2}&=\int_{M}\langle \psi,\psi^{+} \rangle G_{g}^{s}*|\psi_{k}|^{2}) \ dv\notag\\
&\leq \Big(\int_{M} (G_{g}^{s}*|\psi_{k}|^{2}) |\psi_{k}|^{2} \ dv\Big)^{\frac{1}{2}} \Big(\int_{M} (G_{g}^{s}*|\psi_{k}|^{2}) |\psi_{k}^{+}|^{2} \ dv\Big)^{\frac{1}{2}}\notag\\
&\leq \Big(\int_{M} (G_{g}^{s}*|\psi_{k}|^{2}) |\psi_{k}|^{2} \ dv\Big)^{\frac{1}{2}} \|G_{g}^{s}*|\psi_{k}|^{2}\|_{L^{n}}^{\frac{1}{2}}\|\psi_{k}^{+}\|_{L^{\frac{2n}{n-1}}}\notag\\
&\leq (C+o(\|\psi_{k}\|))\|\psi_{k}^{+}\|.
\end{align}
A similar inequality holds for $\|\psi^{-}_{k}\|^{2}$, leading to
$$\|\psi_{k}\|\leq C+o(\|\psi_{k}\|).$$
Hence, $(\psi_{k})$ is bounded in $H^{\frac{1}{2}}(M)$.
\end{proof}

\noindent
\begin{remark}\label{rem:weaksolution}
From the previous Lemma, it follows that there exists $\psi_{\infty}\in H^{\frac{1}{2}}(M)$ such that (up to sub-sequences) $\psi_{k}\rightharpoonup \psi_{\infty}$ weakly in $H^{\frac{1}{2}}(M)$ and $L^{\frac{2n}{n-1}}(M)$ and strongly in $L^{p}(M)$ for $1\leq p<\frac{2n}{n-1}$. Moreover, one can easily see that $\psi_{\infty}$ is a weak solution of $(\ref{eq})$; in particular from Theorem \ref{reg} it is smooth. 
\end{remark}

\begin{lemma}\label{lem:transpose}
Let $h_{k}:=\psi_{k} - \psi_{\infty}$, then we have
$$J_{g}(h_{k}) = J_{g}(\psi_{k}) - J_{g}(\psi_{\infty})+o(1) \; ,$$
and 
$$\nabla J_{g}(h_{k})\to 0 \; .$$
\end{lemma}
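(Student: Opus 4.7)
The plan is to treat the quadratic (Dirac) and quartic (convolution) pieces of $J_g$ separately, combining the weak convergence $h_k\rightharpoonup 0$ in $H^{\frac{1}{2}}(M)$ with the strong convergence $h_k\to 0$ in $L^p(M)$ for every $p<\frac{2n}{n-1}$ (both from Remark \ref{rem:weaksolution}). Writing $J_g(\psi)=\tfrac{1}{2}Q(\psi)-\tfrac{1}{4}I(\psi)$ with $Q(\psi)=\int_M\langle D\psi,\psi\rangle\,dv_{g}$ and $I(\psi)=\int_{M\times M}G^s(x,y)|\psi(x)|^2|\psi(y)|^2\,dv_{g}(x)\,dv_{g}(y)$, the self-adjointness of $D$ gives $Q(\psi_k)=Q(h_k)+Q(\psi_\infty)+2\int_M\langle D\psi_\infty,h_k\rangle\,dv_{g}$, and the last integral is $o(1)$ because $\psi_\infty\in C^\infty$ by Theorem \ref{reg}, so $D\psi_\infty\in H^{-\frac{1}{2}}$, while $h_k\rightharpoonup 0$ in $H^{\frac{1}{2}}$.

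For the quartic piece I would carry out a Brezis-Lieb-type splitting: expand $|\psi_k|^2=|h_k|^2+|\psi_\infty|^2+r_k$ with $r_k:=2\,\mathrm{Re}\langle h_k,\psi_\infty\rangle$, so $r_k\to 0$ in $L^p$ for every $p<\frac{2n}{n-1}$ (using $\psi_\infty\in L^\infty$). Multiplying out $I(\psi_k)$ and subtracting $I(h_k)+I(\psi_\infty)$ produces a sum of cross terms, each controlled via H\"older together with the Hardy-Littlewood-Sobolev inequality: by Proposition \ref{propgreen}, $G^s\sim |x-y|^{-(n-2)}$ near the diagonal, so convolution with $G^s$ is bounded $L^{n/(n-1)}\to L^n$. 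Typical estimates are $\int_{M\times M}G^s(x,y)|h_k(x)|^2|\psi_\infty(y)|^2\,dv_{g}(x)dv_{g}(y)\le \|G^s*|\psi_\infty|^2\|_{L^\infty}\|h_k\|_{L^2}^2\to 0$, and the borderline term $\int_{M\times M}G^s(x,y)r_k(x)r_k(y)\,dv_{g}(x)dv_{g}(y)\le C\|r_k\|_{L^{n/(n-1)}}^2\to 0$ since $n/(n-1)<\frac{2n}{n-1}$; summing yields $I(\psi_k)=I(h_k)+I(\psi_\infty)+o(1)$, which combined with the first step proves the identity for $J_g$.

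For the gradient claim I would use that $\nabla J_g(\psi_k)\to 0$ in $H^{-\frac{1}{2}}$ (Palais-Smale) and $\nabla J_g(\psi_\infty)=0$ by Remark \ref{rem:weaksolution}, so it suffices to prove that $\mathcal{R}_k:=\nabla J_g(\psi_k)-\nabla J_g(\psi_\infty)-\nabla J_g(h_k)\to 0$ in $H^{-\frac{1}{2}}$. The Dirac piece of $\mathcal{R}_k$ cancels by linearity, and the nonlinear piece equals $A_k\psi_\infty+Bh_k+C_k(h_k+\psi_\infty)$ with $A_k:=G^s*|h_k|^2$, $B:=G^s*|\psi_\infty|^2$ and $C_k:=G^s*r_k$. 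Using the duality $L^{\frac{2n}{n+1}}\hookrightarrow H^{-\frac{1}{2}}$, the term $Bh_k$ is immediate ($B\in L^\infty$, $h_k\to 0$ in $L^2$); for $A_k\psi_\infty$ and $C_k(h_k+\psi_\infty)$, HLS applied to the sub-critical decay of $|h_k|^2$ and of $r_k$, combined with H\"older and the boundedness of $\psi_\infty$, yields convergence to zero in $L^{\frac{2n}{n+1}}$. The main obstacle throughout is that every cross term sits exactly at the critical Sobolev exponent $\frac{2n}{n-1}$ for the Dirac operator, at which $H^{\frac{1}{2}}\hookrightarrow L^{\frac{2n}{n-1}}$ fails to be compact; the remedy is to stay strictly below the critical threshold at each step and rely on the $2s=n-2$ derivatives of smoothing provided by convolution with $G^s$ to gain just enough room to close the estimates in $H^{-\frac{1}{2}}$.
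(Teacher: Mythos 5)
Your argument is correct and follows essentially the same strategy as the paper's proof: expand $J_{g}(\psi_{k})=J_{g}(\psi_{\infty}+h_{k})$, and show every cross term between $\psi_{\infty}$ and $h_{k}$ vanishes using the compact embedding $H^{\frac{1}{2}}\hookrightarrow L^{p}$ for $p<\frac{2n}{n-1}$, the boundedness of $\psi_{\infty}$, and the Hardy--Littlewood--Sobolev mapping property of convolution with $G^{s}$. The one organizational difference is that the paper gathers all first-order-in-$h_{k}$ terms into the single quantity $\langle\nabla J_{g}(\psi_{\infty}),h_{k}\rangle$, which vanishes in one stroke because $\psi_{\infty}$ is a critical point; you instead kill the Dirac cross term $\int_{M}\langle D\psi_{\infty},h_{k}\rangle\,dv_{g}$ by weak $H^{\frac{1}{2}}$-convergence against the fixed element $D\psi_{\infty}\in H^{-\frac{1}{2}}$, and the linear nonlinear cross term by $L^{\infty}$-boundedness of $G^{s}*|\psi_{\infty}|^{2}$ together with strong low-exponent convergence of $h_{k}$. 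This is a genuine if small variant: your version does not invoke the Euler--Lagrange equation for $\psi_{\infty}$ at all (it would apply verbatim to an arbitrary weak limit), whereas the paper's grouping is shorter to write once the equation is available. Both are fine.

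One slip worth correcting: you write that $G^{s}\sim|x-y|^{-(n-2)}$ near the diagonal. By Proposition \ref{propgreen} the singularity is $|x-y|^{-(n-2s)}$, and since $2s=n-2$ this is $|x-y|^{-2}$ in every dimension $n\ge 3$; your stated exponent $n-2$ agrees with the true exponent $2$ only when $n=4$. Fortunately the mapping property you actually use, namely $G^{s}*:L^{\frac{n}{n-1}}\to L^{n}$, is the correct one for the true exponent $2$ (it would be wrong for $n-2$ unless $n=4$), so all the quantitative estimates in your argument stand; only the stated singularity profile should be fixed.
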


\begin{proof}
We have
\begin{align}
J_{g}(\psi_{k})&=J_{g}(\psi_{\infty})+J_{g}(h_{k})+\langle \nabla J_{g}(\psi_{\infty}),h_{k}\rangle -\frac{1}{2}\int_{M} (G_{g}^{s}*|\psi_{\infty}|^{2}) |h_{k}|^{2} \ dv \notag\\
&\quad-\int_{M} (G_{g}^{s}*|h_{k}|^{2}) \langle \psi_{\infty},h_{k}\rangle \ dv -\int_{M} (G_{g}^{s}*\langle \psi_{\infty},h_{k}\rangle) \langle \psi_{\infty},h_{k}\rangle\ dv.\notag
\end{align}
Since $\psi_{\infty}$ is a solution of $(\ref{eq})$, we have $\langle \nabla J_{g}(\psi_{\infty}),h_{k}\rangle=0$. Also, since $h_{k}\to 0$ weakly in $H^{\frac{1}{2}}(M)$ we have that $h_{k}\to 0$ strongly in $L^{p}(M)$ for all $p<\frac{2n}{n-1}$. Therefore, 
$$\int_{M} (G_{g}^{s}*|\psi_{\infty}|^{2})|h_{k}|^{2}\ dv\to 0, \text{ as } k\to \infty.$$
Similarly, we have by H\"{o}lder's inequalities that
\begin{align}
\left|\int_{M} (G_{g}^{s}*|h_{k}|^{2}) \langle \psi_{\infty},h_{k}\rangle \ dv \right| &\leq \|G_{g}^{s}*|h_{k}|^{2}\|_{L^{n}}\|\psi_{\infty}\|_{L^{\infty}}\|h_{k}\|_{L^{\frac{n}{n-1}}}\notag\\
&\leq C \|h_{k}\|_{L^{\frac{2n}{n-1}}}^{2}\|h_{k}\|_{L^{\frac{n}{n-1}}}.\notag
\end{align}
Notice that Lemma \ref{lem:PSbounded} implies that $\|h_{k}\|_{H^{\frac{1}{2}}}$ is uniformly bounded and $\|h_{k}\|_{L^{\frac{n}{n-1}}}\to 0$ as $k\to \infty$. Hence, we have
$$\int_{M} (G_{g}^{s}*|h_{k}|^{2}) \langle \psi_{\infty},h_{k}\rangle \ dv \to 0 \text{ as } k\to \infty.$$
Similarly,
$$\int_{M} (G_{g}^{s}*\langle \psi_{\infty},h_{k}\rangle) \langle \psi_{\infty},h_{k}\rangle\ dv\to 0, \text{ as } k\to \infty.$$
Thus,
$$J_{g}(\psi_{k})=J_{g}(\psi_{\infty})+J_{g}(h_{k})+o(1).$$
The statement for $\nabla J_{g}(h_{k})$ can be proved in the same way.  
\end{proof}

\noindent
From now on, we will assume without loss of generality that the (PS) sequence $(\psi_{k})$ converges weekly to $0$, namely $\psi_{\infty}=0$. Given $\varepsilon_{0}>0$, we define the following sets 
$$\Sigma_{1}(\varepsilon_{0})=\left\{x\in M; \liminf_{r\to 0}\liminf_{k\to \infty}\int_{B_r(x)}|\psi_{k}|^{\frac{2n}{n-1}}\ dv>\varepsilon_{0} \right\},$$
$$\Sigma_{2}(\varepsilon_{0})=\left\{x\in M; \liminf_{r\to 0}\liminf_{k\to \infty}\int_{B_r(x)}\Big(G_{g}^{s}*|\psi_{k}|^{2}\Big)^{n}\ dv>\varepsilon_{0} \right\},$$
and 
$$\Sigma_{3}(\varepsilon_{0})=\left\{x\in M; \liminf_{r\to 0}\liminf_{k\to \infty}\int_{B_r(x)} (G_{g}^{s}*|\psi_{k}|^{2}) |\psi_{k}|^{2} \ dv>\varepsilon_{0} \right\},$$
where $B_r(x)$ is the geodesic ball with center in $x$ and radius $r$. We can state then the following $\varepsilon$-regularity type result.
\begin{lemma}
Let $(\psi_{k})$ be a (PS) sequence converging weekly to $0$. There exists $\varepsilon_{0}>0$ such that if $x\not \in \Sigma_{1}(\varepsilon_{0})\cap \Sigma_{2}(\varepsilon_{0})\cap \Sigma_{3}(\varepsilon_{0})$, then there exists $r>0$ such that $\psi_{k}\to 0$ in $H^{\frac{1}{2}}(B_{r}(x_{0}))$.
\end{lemma}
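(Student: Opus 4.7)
The plan is to carry out a localized $\varepsilon$-regularity argument in the same spirit as the proof of Theorem \ref{reg}, choosing the threshold $\varepsilon_{0}$ small enough that a multiplicative term in the resulting estimate can be absorbed. The hypothesis $x_0 \not\in \Sigma_{1}(\varepsilon_{0})\cap \Sigma_{2}(\varepsilon_{0})\cap \Sigma_{3}(\varepsilon_{0})$ means, after passing to a subsequence, that at least one of the three relevant quantities is controlled by $\varepsilon_{0}$ on some small geodesic ball $B_{r}(x_{0})$. I will first work out the cleanest case, when the $\Sigma_{2}$-condition is in force, namely $\|G_{g}^{s}\ast|\psi_{k}|^{2}\|_{L^{n}(B_{r})}\leq \varepsilon_{0}^{1/n}+o(1)$, and then indicate the minor modifications for the other two cases.

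Fix cutoffs $\eta_{1},\eta_{2}\in C^{\infty}_{c}(M)$ with $\eta_{2}=1$ on $B_{r/2}(x_{0})$, $\mathrm{supp}(\eta_{2})\subset B_{r}(x_{0})$, and $\eta_{1}=1$ on $B_{2r}(x_{0})$, $\mathrm{supp}(\eta_{1})\subset B_{3r}(x_{0})$. Using $D_{g}\psi_{k}=(G_{g}^{s}\ast|\psi_{k}|^{2})\psi_{k}+\varepsilon_{k}$ with $\varepsilon_{k}\to 0$ in $H^{-1/2}$, the Leibniz rule for $D_{g}$ yields the localized identity
\begin{equation*}
D_{g}(\eta_{2}\psi_{k})=(G_{g}^{s}\ast|\psi_{k}|^{2})\,\eta_{2}\psi_{k}+\nabla\eta_{2}\cdot\psi_{k}+\eta_{2}\varepsilon_{k}.
\end{equation*}
Under the standing positive Yamabe assumption, $D_{g}:H^{1/2}(M)\to H^{-1/2}(M)$ is invertible, so $\|\eta_{2}\psi_{k}\|_{H^{1/2}}\leq C\,\|D_{g}(\eta_{2}\psi_{k})\|_{H^{-1/2}}$.

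The key estimate is on the nonlinear term. Since $\eta_{2}\psi_{k}$ is supported in $B_{r}(x_{0})$, only the values of $G_{g}^{s}\ast|\psi_{k}|^{2}$ on that ball enter; combining H\"older with the Sobolev embedding $H^{1/2}\hookrightarrow L^{2n/(n-1)}$ and its dual $L^{2n/(n+1)}\hookrightarrow H^{-1/2}$ gives
\begin{equation*}
\|(G_{g}^{s}\ast|\psi_{k}|^{2})\,\eta_{2}\psi_{k}\|_{H^{-1/2}}\leq C\,\|G_{g}^{s}\ast|\psi_{k}|^{2}\|_{L^{n}(B_{r})}\,\|\eta_{2}\psi_{k}\|_{H^{1/2}}\leq C(\varepsilon_{0}^{1/n}+o(1))\|\eta_{2}\psi_{k}\|_{H^{1/2}}.
\end{equation*}
The commutator $\nabla\eta_{2}\cdot\psi_{k}$ is the multiplication of a smooth compactly supported function by $\psi_{k}$; since $\psi_{k}\rightharpoonup 0$ in $H^{1/2}$ implies $\psi_{k}\to 0$ strongly in $L^{2}(M)$ by compactness of the embedding, we get $\|\nabla\eta_{2}\cdot\psi_{k}\|_{L^{2}}\to 0$, hence tending to $0$ in $H^{-1/2}$; similarly $\eta_{2}\varepsilon_{k}\to 0$ in $H^{-1/2}$ directly from the PS property.

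Choosing $\varepsilon_{0}$ a priori so that $C\varepsilon_{0}^{1/n}\leq 1/2$ (with $C$ the absolute Sobolev/H\"older constant), one absorbs the first term on the right, concluding $\|\eta_{2}\psi_{k}\|_{H^{1/2}}\to 0$, which is precisely convergence to $0$ in $H^{1/2}(B_{r/2}(x_{0}))$. When instead the condition that fails is $\Sigma_{1}$, the same scheme works after interchanging the two H\"older factors: the smallness $\|\psi_{k}\|_{L^{2n/(n-1)}(B_{r})}\leq \varepsilon_{0}^{(n-1)/2n}$ is paired with the globally bounded $\|G_{g}^{s}\ast|\psi_{k}|^{2}\|_{L^{n}(M)}$, whose boundedness follows from the mapping properties of the Riesz-type kernel $G_{g}^{s}$ in Proposition \ref{propgreen} applied to the bounded sequence $|\psi_{k}|^{2}\in L^{n/(n-1)}(M)$. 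The case $x_0\not\in \Sigma_{3}$ reduces to the previous two by Cauchy-Schwarz on the bilinear form $\int (G_{g}^{s}\ast|\psi_{k}|^{2})|\psi_{k}|^{2}$. The main technical point is fixing a universal $\varepsilon_{0}$ depending only on the Sobolev constants of $(M,g)$ before any choice of ball $B_{r}$; this is standard once one works in normal coordinates around $x_{0}$ and uses the uniform local geometry of $M$.
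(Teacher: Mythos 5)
Your treatment of the $\Sigma_{2}$ case is essentially identical to the paper's and is fine: pairing the small $\|G_{g}^{s}*|\psi_{k}|^{2}\|_{L^{n}(B_{r})}$ with $\|\eta_{2}\psi_{k}\|_{L^{2n/(n-1)}}\leq C\|\eta_{2}\psi_{k}\|_{H^{1/2}}$ allows absorption once $C\varepsilon_{0}^{1/n}<1/2$.

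The $\Sigma_{1}$ case, however, has a genuine gap. You propose to pair the small factor $\|\eta_{2}\psi_{k}\|_{L^{2n/(n-1)}(B_{r})}\leq\varepsilon_{0}^{(n-1)/(2n)}$ with the merely bounded $\|G_{g}^{s}*|\psi_{k}|^{2}\|_{L^{n}(M)}$. This yields
\begin{equation*}
\|\eta_{2}\psi_{k}\|_{H^{1/2}}\leq C\,\varepsilon_{0}^{(n-1)/(2n)}+o(1),
\end{equation*}
which is a uniform small bound, not convergence to zero. There is nothing to absorb, because the factor you exploit for smallness is the very factor $\|\eta_{2}\psi_{k}\|_{L^{2n/(n-1)}}\leq C\|\eta_{2}\psi_{k}\|_{H^{1/2}}$ that the absorption scheme is supposed to absorb; you are using it in both roles at once. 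The paper avoids this by decomposing $G_{g}^{s}*|\psi_{k}|^{2}=G_{g}^{s}*|\eta_{1}\psi_{k}|^{2}+G_{g}^{s}*\big((1-\eta_{1}^{2})|\psi_{k}|^{2}\big)$. The second term is $o(1)$ in the relevant norm because $G_{g}^{s}$ is bounded off the diagonal and $\psi_{k}\to 0$ in $L^{2}$. For the first term, the elliptic (or Hardy--Littlewood--Sobolev) bound gives $\|G_{g}^{s}*|\eta_{1}\psi_{k}|^{2}\|_{L^{n}}\leq C\|\eta_{1}\psi_{k}\|_{L^{2n/(n-1)}}^{2}\leq C\varepsilon_{0}^{(n-1)/n}$; this \emph{small} coefficient then multiplies the absorbable $\|\eta_{2}\psi_{k}\|_{L^{2n/(n-1)}}$ coming from H\"older. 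Without the $\eta_{1}$ splitting your estimate does not close.

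The $\Sigma_{3}$ case also cannot be dispatched by ``reduction by Cauchy--Schwarz'': smallness of $\int_{B_{r}}(G_{g}^{s}*|\psi_{k}|^{2})|\psi_{k}|^{2}$ does not imply smallness of either $\int_{B_{r}}(G_{g}^{s}*|\psi_{k}|^{2})^{n}$ or $\int_{B_{r}}|\psi_{k}|^{2n/(n-1)}$, so you cannot fall back on the other two cases. The paper handles it directly: after the $\eta_{1}$ decomposition, the local piece $A_{1}=\eta_{2}(G_{g}^{s}*|\eta_{1}\psi_{k}|^{2})\psi_{k}$ is estimated by a H\"older split $\|A_{1}\|_{L^{2n/(n+1)}}\leq\big(\int\eta_{2}(G_{g}^{s}*|\eta_{1}\psi_{k}|^{2})|\psi_{k}|^{2}\big)^{1/2}\|G_{g}^{s}*|\eta_{1}\psi_{k}|^{2}\|_{L^{n}}^{1/2}$, where the first factor is small by the $\Sigma_{3}$ hypothesis and the second is controlled as before. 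You would need to supply such an argument rather than appeal to a reduction.
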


\begin{proof}
We will use the same notations as in the proof of Theorem \ref{reg}. We have, as in (\ref{eqreg}),
$$D_{g}(\eta_{2} \psi_{k})=\eta_2 (G_{g}^{s}*|\psi_{k}|^{2})\psi_{k}+\nabla \eta_{2} \cdot \psi_{k}+\delta_{k},$$
where  $\delta_{k}\to 0$ in $H^{-\frac{1}{2}}(M)$. Using elliptic estimates, we have 
\begin{align}
\|\eta_{2} \psi_{k}\|_{H^{\frac{1}{2}}}&\leq C\|\eta_{2} (G_{g}^{s}*|\psi_{k}|^{2}) \psi_{k}+\nabla \eta_{2} \cdot \psi_{k}+\delta_{k}\|_{H^{-\frac{1}{2}}}\notag\\
&\leq C_{1} \Big( \|\eta_{2} (G_{g}^{s}*|\psi_{k}|^{2}) \psi_{k}\|_{L^{\frac{2n}{n+1}}(B_{r})}+\|\nabla \eta_{2} \cdot \psi_{k}\|_{L^{\frac{2n}{n+1}}}+o(1)\Big).\notag
\end{align}
In addition, we have
$$\|\nabla \eta_{2} \cdot \psi_{k}\|_{L^{\frac{2n}{n+1}}}\leq C_2 \|\psi_{k}\|_{L^{\frac{2n}{n+1}}}\to 0 \; .$$
Now, we assume first that $x_0\not \in \Sigma_{2}(\varepsilon)$, then by H\"{o}lder's inequalities,
\begin{align}
\|\eta_{2} (G_{g}^{s}*|\psi_{k}|^{2}) \psi_{k}\|_{L^{\frac{2n}{n+1}}}&\leq  \|G_{g}^{s}*|\psi_{k}|^{2}\|_{L^{n}(B_{r})}\|\eta_{2} \psi_{k}\|_{L^{\frac{2n}{n-1}}}\notag\\
&\leq C_{3}\varepsilon^{\frac{1}{n}}\|\eta_{2} \psi_{k}\|_{H^{\frac{1}{2}}}\notag \;.
\end{align}
Therefore, if $ C_{1}C_{3}\varepsilon^{\frac{1}{n}}<\frac{1}{2}$, we have
$$\|\eta_{2} \psi_{k}\|_{H^{\frac{1}{2}}}\leq C_4 \|\nabla \eta_{2} \cdot \psi_{k}\|_{L^{\frac{2n}{n+1}}}+o(1).$$
Hence, $\eta_{2} \psi_{k}\to 0$ in $H^{\frac{1}{2}}(M)$.\\
On the other hand, let us assume that $x_0\not \in \Sigma_{1}(\varepsilon)$. Then as in (\ref{eqreg2}), we can write 
\begin{equation}\label{eq:epsreg1}
  \eta_{2} (G_{g}^{s}*|\psi_{k}|^{2}) \psi_{k}= \eta_{2} (G_{g}^{s}*|\eta_{1}\psi_{k}|^{2}) \psi_{k}+ \eta_{2}(G_{g}^{s}*(1-\eta_{1}^{2})|\psi_{k}|^{2}) \psi_k =A_{1}(\psi_{k})+A_{2}(\psi_{k}).
\end{equation}
Now we notice that
$$\|A_{2}(\psi_{k})\|_{L^{\frac{2n}{n+1}}}\leq C_5\|\psi_{k}\|_{L^{2}}^{3} \to 0 \text{ as } k\to 0.$$
Also,
\begin{align}
\|A_{1}(\psi_{k})\|_{L^{\frac{2n}{n+1}}}&\leq C_{6}\|\eta_{1}\psi_{k}\|^{2}_{L^{\frac{2n}{n-1}}}\|\eta_{2}\psi_{k}\|_{L^{\frac{2n}{n-1}}}\notag\\
&\leq C_{6}C_{7}C_{8}\varepsilon^{\frac{n-1}{n}}\|\eta_{2}\psi_{k}\|_{H^{\frac{1}{2}}}.\notag
\end{align}
Hence, for $C_{6}C_{7}C_{8}\varepsilon^{\frac{n-1}{n}}<\frac{1}{2}$, we get again
$$\|\eta_{2}\psi_{k}\|_{H^{\frac{1}{2}}}\to 0.$$
In order to finish the proof, we see from the decomposition (\ref{eq:epsreg1}) that for $x_{0}\not \in \Sigma_{3}(\varepsilon)$ we have
\begin{align}
\|A_{1}(\psi_{k})\|_{L^{\frac{2n}{n+1}}}&\leq \Big(\int_{M}\eta_{2} (G_{g}^{s}*|\eta_{1}\psi_{k}|^{2}) |\psi_{k}|^{2} \ dv\Big)^{\frac{1}{2}}\|G_{g}^{s}*|\eta_{1}\psi_{k}|^{2}\|_{L^{n}}^{\frac{1}{2}}\notag\\
&\leq C_9 \varepsilon^{\frac{1}{2}}\|\eta_{2}\psi_{k}\|_{H^{\frac{1}{2}}}\notag
\end{align}
Again, for $ C_9\varepsilon^{\frac{1}{2}}<\frac{1}{2}$ we obtain the desired conclusion.  
\end{proof}

\noindent
As a corollary, we get the following result.
\begin{proposition}\label{pro:PSstrong}
Let $(\psi_{k})$ be a (PS) sequence converging weekly to $0$. If the (PS) sequence $(\psi_{k})$ does not converge (up to subsequences) strongly to zero in $H^{\frac{1}{2}}(M)$, then there exists $\varepsilon_{0}>0$ (even smaller if necessary) such that
$$\Sigma_{1}(\varepsilon_{0})=\Sigma_{2}(\varepsilon_{0})= \Sigma_{3}(\varepsilon_{0})\not=\emptyset.$$
Moreover, if $(\psi_{k})$ is a (PS) sequence at level $c$, with $4c < \varepsilon_{0}$, then  $(\psi_{k})$ converges strongly to zero in $H^{\frac{1}{2}}(M)$.
\end{proposition}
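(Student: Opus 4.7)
The plan is to prove the two parts of the proposition in sequence, both relying on the $\varepsilon$-regularity lemma just established.

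For the first part, assume the (PS) sequence $(\psi_k)$ does not converge strongly to zero in $H^{\frac{1}{2}}(M)$. I would first show $\Sigma_1(\varepsilon_0) \cap \Sigma_2(\varepsilon_0) \cap \Sigma_3(\varepsilon_0) \neq \emptyset$ by contradiction: if this intersection were empty, for every $x \in M$ the previous lemma would provide a ball around $x$ on which $\psi_k \to 0$ in $H^{\frac{1}{2}}$ (after cutoff); extracting a finite subcover by compactness of $M$, together with a subordinate partition of unity and the continuity of multiplication by smooth compactly supported functions on $H^{\frac{1}{2}}$, would yield strong convergence to zero globally, contradicting the hypothesis.

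Next, to establish $\Sigma_1(\varepsilon_0) = \Sigma_2(\varepsilon_0) = \Sigma_3(\varepsilon_0)$ (after shrinking $\varepsilon_0$ if necessary), I would prove the unifying statement that if $\psi_k \to 0$ strongly in $H^{\frac{1}{2}}$ on a neighborhood $U$ of $x$, then $x$ lies in none of the three sets. Membership in $\Sigma_1$ is ruled out by the continuous Sobolev embedding $H^{\frac{1}{2}} \hookrightarrow L^{\frac{2n}{n-1}}$. For $\Sigma_2$, I split $G_g^s * |\psi_k|^2 = G_g^s * (\chi_U |\psi_k|^2) + G_g^s * (\chi_{M \setminus U} |\psi_k|^2)$: the first summand has $L^n$ norm controlled by $\|\psi_k\|_{L^{\frac{2n}{n-1}}(U)}^2$ via the Hardy-Littlewood-Sobolev estimate (equivalently, the continuous mapping $(P_g^s)^{-1} : L^{\frac{n}{n-1}} \to L^n$), while the second is uniformly bounded on a smaller ball by $C \|\psi_k\|_{L^2(M)}^2$ using that $G_g^s$ is bounded off the diagonal, and $\psi_k \to 0$ in $L^2(M)$ by compact embedding. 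For $\Sigma_3$, H\"older's inequality gives $\int_{B_r(x)} (G_g^s * |\psi_k|^2) |\psi_k|^2 \, dv \leq \|G_g^s * |\psi_k|^2\|_{L^n(M)} \|\psi_k\|_{L^{\frac{2n}{n-1}}(B_r(x))}^2$, where the first factor is uniformly bounded and the second vanishes in the limit. If some $x \in \Sigma_i(\varepsilon_0) \setminus \Sigma_j(\varepsilon_0)$ existed for some $i \neq j$, then $x$ would lie outside the triple intersection, the previous lemma would apply, and the three implications above would give the contradiction $x \notin \Sigma_i(\varepsilon_0)$.

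For the second part, I combine identity (\ref{eq:J gradJ}) with the boundedness of $(\psi_k)$ from Lemma \ref{lem:PSbounded}: since $\nabla J_g(\psi_k) \to 0$ in $H^{-\frac{1}{2}}$, we have $\langle \nabla J_g(\psi_k), \psi_k \rangle \to 0$, so $\int_M (G_g^s * |\psi_k|^2) |\psi_k|^2 \, dv \to 4c$. If $4c < \varepsilon_0$, then for large $k$ this global integral lies strictly below $\varepsilon_0$, hence the local integral over any ball does too, and $\Sigma_3(\varepsilon_0) = \emptyset$. The contrapositive of the first part then forces $\psi_k \to 0$ strongly in $H^{\frac{1}{2}}(M)$. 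The main obstacle is the nonlocality of the convolution $G_g^s *$, which prevents the implication \emph{local strong convergence implies $x \notin \Sigma_2$} from being immediate; the near/far splitting above, crucially combined with strong $L^2(M)$ convergence (from compact embedding and weak convergence to zero), is the key technical device.
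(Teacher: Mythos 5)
Your proof is correct and follows the same underlying approach as the paper; in fact, the paper gives essentially no argument for the first assertion (it merely states the proposition ``as a corollary'' of the $\varepsilon$-regularity lemma and only remarks that the second assertion ``follows immediately from equation (\ref{eq:J gradJ}) and the definition of $\Sigma_{3}(\varepsilon_{0})$''), so what you have done is faithfully reconstruct the missing details. Your covering-plus-partition-of-unity argument for non-emptiness of the triple intersection, your near/far splitting of the convolution combined with the Sobolev mapping $(P_{g}^{s})^{-1}:L^{\frac{n}{n-1}}\to L^{n}$ and the compact embedding into $L^{2}$ to rule out $\Sigma_{2}$-membership once local strong convergence is known, and your use of (\ref{eq:J gradJ}) together with boundedness from Lemma \ref{lem:PSbounded} to get $\int_{M}(G_{g}^{s}*|\psi_{k}|^{2})|\psi_{k}|^{2}\,dv\to 4c$ and hence $\Sigma_{3}(\varepsilon_{0})=\emptyset$ when $4c<\varepsilon_{0}$, are all the intended content. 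Minor note: no actual shrinking of $\varepsilon_{0}$ is needed for the equality $\Sigma_{1}=\Sigma_{2}=\Sigma_{3}$, since local strong convergence rules out membership in each set for every $\varepsilon>0$; the paper's parenthetical ``(even smaller if necessary)'' is not load-bearing.
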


\noindent
The last assertion follows immediately from equation (\ref{eq:J gradJ}) and the definition of $\Sigma_{3}(\varepsilon_{0})$.

\noindent
We need here to take into account again the dependence on the metric. Let us consider now the concentration function 
$$Q_{k}(t)=\sup_{x\in M} \int_{B_t(x)} (G_g^{s}*|\psi_{k}|^{2})^{n} \ dv_{g} .$$
If we assume that $\Sigma_{2}(\varepsilon_{0})\not=0$, then given $\varepsilon>0$ so that $3\varepsilon<\varepsilon_{0}$, there exists $R_{k}>0$ such that $R_{k}\to 0$ and a sequence $x_{k}\in M$, that we can assume converging to a certain $x_{0}\in \Sigma_{2}(\varepsilon_{0})$ so that
\begin{equation}\label{eq:Q concentration epsilon}
  Q_{k}(R_{k})=\int_{B_{R_{k}}(x_{k})} (G_g^s*|\psi_{k}|^{2})^{n} dv_{g} =\varepsilon.
\end{equation}
We let $\rho_{k}(x)=\exp_{x_{k}}(R_{k}x)$ defined for $R_{k}|x|<\iota(M)$; here $\iota(M)$ is the injectivity radius of $M$, that we will assume for the sake of simplicity $\iota(M)\geq 3$. Therefore, if we let $B_{R}^{0}$ denote the Euclidean ball centered at zero and of radius $R$, then we have that the two spaces $(B_{R}^{0}, R_{k}^{-2}\rho^{*}_{k}g)$ and $(B_{R_{k}R}(x_{k}),g)$ are conformally equivalent for $k$ large enough. We define then the metric $g_{k}=R_{k}^{-2}\rho_{k}^{*}g$ on $B_{R}^{0}$. It is easy to see that $g_{k}\to g_{\R^{n}}$ in $C^{\infty}(B_{R}^{0})$. We will use the map $\rho_{k}$ to also identify the spinor bundles, that is 
$$(\rho_{k})_{*}:\Sigma_{x_{0}}(B_{R}^{0},g_{k})\to \Sigma_{\rho_{k}(x_{0})}(M,g) .$$
We can then define the spinor $\Psi_{k}=R_{k}^{\frac{n-1}{2}}\rho_{k}^{*}\psi_{k}$ on $\Sigma_{x_{0}}(B_{R}^{0},g_{k})$,  where $\rho_{k}^{*}\psi_{k}=(\rho_{k})_{*}^{-1}\circ \psi_{k}\circ (\rho_{k})_{*}$. Therefore, based on the properties of the convolution and the conformal invariance, we have 
$$\int_{B_{R}^{0}\times B_{R}^{0}}G_{g_{k}}^s(x,y) |\Psi_{k}(x)|^{2} |\Psi_{k}(y)|^{2} \ dv_{g_{k}}(x) \ dv_{g_{k}}(y)$$
$$=\int_{B_{R_{k}R}(x_{k})\times B_{R_{k}R}(x_{k})}G_{g}^s(x,y) |\psi_{k}(x)|^{2} |\psi_{k}(y)|^{2} \ dv_{g}(x) \ dv_{g}(y),$$

$$\int_{B_{R}^{0}}\langle \Psi_{k},D_{g_{k}}\Psi_{k}\rangle\ dv_{g_{k}}=\int_{B_{R_{n}R}(x_{k})}\langle \psi_{k},D_{g}\psi_{k}\rangle\ dv_{g},$$
and 
\begin{equation}\label{eq:Psi-psi}
  \int_{B_{R}^{0}}|\Psi_{k}|^{\frac{2n}{n-1}}\ dv_{g_{k}}=\int_{B_{R_{n}R}(x_{k})}|\psi_{k}|^{\frac{2n}{n-1}}\ dv_{g}.
\end{equation}

\noindent
We can now, state the following result.
\begin{proposition}\label{pro:Fk}
Let $\Psi_{k}$ be the spinor on $\Sigma_{x_{0}}(B_{R}^{0},g_{k})$, defined as before. Let us set
$$F_{k}:=D_{g_{k}}\Psi_k-(G_{g_{k}}^s*|\Psi_{k}|^{2})\Psi_{k} .$$
Then $F_{k}\to 0$ in $H^{-\frac{1}{2}}_{loc}(\R^{n})$, namely for $R>0$, it holds
\begin{equation}\label{eq:H^-1/2 loc}
  \sup\Big\{\int_{\R^{n}}\langle F_{k},\Phi\rangle\ dv_{g_{k}} \; ;  \Phi \in H^{\frac{1}{2}}(\R^{n}) \; , \|\Phi\|_{H^{\frac{1}{2}}(\R^{n})}\leq 1 \; , supp(\Phi)\subset B_{R}^{0}\Big\} \to 0 \text{ as } k\to \infty.
\end{equation}
\end{proposition}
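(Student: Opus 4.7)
The guiding principle is the conformal invariance that dictated the rescaling producing $(g_k,\Psi_k)$ from $(g,\psi_k)$. The plan is to rewrite the distributional pairing $\int_{B_R^0}\langle F_k,\Phi\rangle\,dv_{g_k}$ as a pairing against the residual $\varepsilon_k:=D_g\psi_k-(G_g^s*|\psi_k|^2)\psi_k$ on $M$, which tends to zero in $H^{-1/2}(M)$ by the Palais--Smale hypothesis. Concretely, given an admissible test spinor $\Phi\in H^{1/2}(\mathbb{R}^n)$ with $\mathrm{supp}(\Phi)\subset B_R^0$, I would lift it to the test spinor on $M$ defined by
$$\phi_k(y):=R_k^{(1-n)/2}\,\Phi\bigl(\rho_k^{-1}(y)\bigr)\quad\text{for } y\in B_{R_kR}(x_k),$$
extended by zero outside this shrinking ball. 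This is precisely the inverse of the conformal rescaling used to produce $\Psi_k$ from $\psi_k$.

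Using the transformation rules collected in Section 2 and the setup preceding the statement, one then verifies the key identity
$$\int_{B_R^0}\langle F_k,\Phi\rangle\,dv_{g_k}=\int_M\langle\varepsilon_k,\phi_k\rangle\,dv_g.$$
This rests on three facts, all depending crucially on the choice $2s=n-2$: (i) the conformal covariance of the Dirac operator gives $D_{g_k}\Psi_k=R_k^{(n+1)/2}\,\rho_k^*(D_g\psi_k)$; (ii) the relation $G_{g_k}^s(x,y)=R_k^{n-2s}G_g^s(\rho_k x,\rho_k y)$ together with $|\Psi_k|^2=R_k^{n-1}|\psi_k|^2\circ\rho_k$ and $dv_{g_k}=R_k^{-n}\rho_k^* dv_g$ yield
$$(G_{g_k}^s*|\Psi_k|^2)(x)\,\Psi_k(x)=R_k^{(n+1)/2}\,\rho_k^*\bigl((G_g^s*|\psi_k|^2)\psi_k\bigr)(x);$$
(iii) the change of variables $dv_{g_k}=R_k^{-n}\rho_k^* dv_g$ absorbs the prefactor $R_k^{(n+1)/2}$ and reconstructs $\phi_k$ as the natural test spinor on $M$. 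Combining (i) and (ii) gives the pointwise relation $F_k=R_k^{(n+1)/2}\rho_k^*\varepsilon_k$, from which the identity follows.

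The hard part will be to establish the uniform bound
$$\|\phi_k\|_{H^{1/2}(M)}\leq C\,\|\Phi\|_{H^{1/2}(\mathbb{R}^n)},$$
with $C$ independent of $k$, so that Cauchy--Schwarz together with $\varepsilon_k\to 0$ in $H^{-1/2}(M)$ closes the argument. Since $\phi_k$ is supported in the shrinking geodesic ball $B_{R_kR}(x_k)$, I would transport it to normal coordinates at $x_k$ and rescale by $R_k$, obtaining
$$\bigl(\phi_k\circ\exp_{x_k}\bigr)(z)=R_k^{(1-n)/2}\,\Phi(z/R_k),\qquad z\in B_{R_kR}^0.$$
The exponent $(1-n)/2$ is exactly the conformal weight for the critical Sobolev embedding $H^{1/2}\hookrightarrow L^{2n/(n-1)}$, so the $L^{2n/(n-1)}$-norm is preserved by this rescaling; the Gagliardo seminorm $[\,\cdot\,]_{H^{1/2}}$ on $\mathbb{R}^n$ is likewise scale-invariant, while the $L^2$-norm of $\phi_k$ is of order $R_k^{1/2}\|\Phi\|_{L^2}$, which vanishes. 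Combining these scaling facts with the equivalence of the Riemannian $H^{1/2}$-norm on a small geodesic ball with the Euclidean $H^{1/2}$-norm of its coordinate expression (uniform in $k$ because $g_k\to g_{\mathbb{R}^n}$ in $C^\infty_{\mathrm{loc}}$) delivers the required bound.

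Putting the pieces together, for every admissible $\Phi$ one gets
$$\left|\int_{B_R^0}\langle F_k,\Phi\rangle\,dv_{g_k}\right|=\left|\int_M\langle\varepsilon_k,\phi_k\rangle\,dv_g\right|\leq \|\varepsilon_k\|_{H^{-1/2}(M)}\,\|\phi_k\|_{H^{1/2}(M)}\leq C\,\|\varepsilon_k\|_{H^{-1/2}(M)},$$
and taking the supremum over $\|\Phi\|_{H^{1/2}(\mathbb{R}^n)}\leq 1$ and letting $k\to\infty$ yields the proposition.
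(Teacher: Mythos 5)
Your proposal is correct and follows essentially the same route as the paper: establish the pointwise conformal identity $F_k = R_k^{(n+1)/2}\rho_k^*\delta_k$ (where $\delta_k := D_g\psi_k - (G_g^s*|\psi_k|^2)\psi_k \to 0$ in $H^{-1/2}(M)$), rescale the test spinor to $\phi_k = R_k^{-(n-1)/2}(\rho_k^{-1})^*\Phi$, rewrite the pairing over $M$, and conclude by the uniform bound $\|\phi_k\|_{H^{1/2}(M)}\le C$. The only difference is that you spell out the conformal covariance computations and justify the uniform $H^{1/2}$ bound via the scale invariance of the Gagliardo seminorm under the critical weight $(1-n)/2$, whereas the paper asserts that bound without detail.
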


\begin{proof}
From the definition of $F_k$ and the conformal invariance, we have
$$F_{k}=D_{g_{k}}\Psi_k-(G_{g_{k}}^s*|\Psi_{k}|^{2})\Psi_{k}= R_{k}^{\frac{n+1}{2}}\rho^{*}_{k}\left(D_{g}\psi_{k}-(G_{g}^s*|\psi_{k}|^{2})\psi_{k}\right)= R_{k}^{\frac{n+1}{2}}\rho_{k}^{*}\delta_{k}.$$
Consider then a test spinor $\Phi$ with $supp(\Phi)\subset B_{R}^{0}$ and $\|\Phi\|_{H^{\frac{1}{2}}}\leq 1$. Then we have
\begin{align}
\int_{B_{R_{k}^{-1}}^{0}}\langle F_{k},\Phi\rangle\ dv_{g_{k}}&=\int_{B_{R_{k}^{-1}}^{0}}\langle \rho_{k}^{*}\delta_{k},R_{k}^{-\frac{n-1}{2}}\Phi\rangle\ dv_{\rho_{k}^{*}g}\notag\\
&=\int_{B_{1}(x_{k})}\langle \delta_{k}, R_{k}^{-\frac{n-1}{2}}(\rho_{k}^{-1})^{*}\Phi\rangle\ dv_{g}.\notag
\end{align}
Since $\|\Phi\|_{H^{\frac{1}{2}}(\R^{n})}\leq 1$, there exists $C>0$ such that $\|R_{k}^{-\frac{n-1}{2}}(\rho_{k}^{-1})^{*}\Phi\|_{H^{\frac{1}{2}}(M)}\leq C$. Hence we have that (\ref{eq:H^-1/2 loc}) holds.
\end{proof}

\noindent
We introduce here the following space
$$D^{\frac{1}{2}}(\R^{n})=\left\{\Phi \in L^{\frac{2n}{n-1}}(\R^{n})\; ; |\xi|^{\frac{1}{2}}|\widehat{\Phi}|\in L^{2}(\R^{n})\right\},$$
where $\widehat{\Phi}$ is the Fourier transform of $\Phi$.

\begin{proposition}
Let $\varepsilon>0$ small enough in (\ref{eq:Q concentration epsilon}), then there exists $\Psi_\infty\in D^{\frac{1}{2}}(\R^{n})$ such that $\Psi_{k}\to \Psi_\infty$ in $H^{\frac{1}{2}}_{loc}(\R^{n})$ and $\Psi_\infty$ satisfies the equation
\begin{equation}\label{eq:equationRn}
D_{g_{\R^{n}}}\Psi_\infty=(G_{g_{\R^{n}}}^s*|\Psi_\infty|^{2})\Psi_\infty, \quad \text{in } \R^n \; .
\end{equation}
\end{proposition}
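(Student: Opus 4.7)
The plan is to combine uniform local bounds on the rescaled spinors with an $\varepsilon$-regularity argument analogous to the proof of Theorem \ref{reg}, and then pass to the limit in the rescaled equation using the smooth convergence $g_k\to g_{\mathbb{R}^n}$.

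First, I would establish uniform bounds. Since $\psi_k$ is bounded in $H^{\frac{1}{2}}(M)$ by Lemma \ref{lem:PSbounded}, and the Sobolev embedding gives $\psi_k$ bounded in $L^{\frac{2n}{n-1}}(M)$, the conformal weight $R_k^{(n-1)/2}$ chosen in the definition of $\Psi_k$ is exactly that which preserves the $L^{\frac{2n}{n-1}}$ norm (by the analog of (\ref{eq:Psi-psi})). Thus $\Psi_k$ is uniformly bounded in $L^{\frac{2n}{n-1}}(B_R^0,g_k)$ for any fixed $R$. Using the conformal invariance of the convolution integral and (\ref{eq:Q concentration epsilon}), one also gets that $\|G_{g_k}^s*|\Psi_k|^2\|_{L^n(B_1^0(x))}\leq \varepsilon^{1/n}$ uniformly in $x\in B_R^0$. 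From the rescaled equation $D_{g_k}\Psi_k=(G_{g_k}^s*|\Psi_k|^2)\Psi_k+F_k$ with $F_k\to 0$ in $H^{-\frac{1}{2}}_{loc}$ (Proposition \ref{pro:Fk}), a Hölder estimate $\|(G_{g_k}^s*|\Psi_k|^2)\Psi_k\|_{L^{2n/(n+1)}(B_1^0)}\leq \varepsilon^{1/n}\|\Psi_k\|_{L^{2n/(n-1)}(B_1^0)}$ combined with interior elliptic estimates for $D_{g_k}$ (which is uniformly elliptic since $g_k\to g_{\mathbb{R}^n}$ smoothly) yields uniform $H^{\frac{1}{2}}_{loc}(\mathbb{R}^n)$ bounds for $\Psi_k$.

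Next, by a standard diagonal extraction over an exhaustion of $\mathbb{R}^n$ by balls, I obtain a subsequence and a spinor $\Psi_\infty$ such that $\Psi_k\rightharpoonup\Psi_\infty$ weakly in $H^{\frac{1}{2}}_{loc}$ and strongly in $L^p_{loc}$ for $p<\frac{2n}{n-1}$. The uniform $L^{\frac{2n}{n-1}}$ bound passes to the limit, and combined with the Dirac energy control (from $\int\langle D_{g_k}\Psi_k,\Psi_k\rangle\,dv_{g_k}$ being uniformly bounded), it follows that $\Psi_\infty\in D^{\frac{1}{2}}(\mathbb{R}^n)$, since this space is precisely the completion of $C_c^{\infty}(\mathbb{R}^n)$ under the homogeneous half-Sobolev seminorm equivalent to $\|\,| D_{\mathbb{R}^n}|^{1/2}\cdot\|_{L^2}$. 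To upgrade to strong $H^{\frac{1}{2}}_{loc}$ convergence, I would set $h_k:=\Psi_k-\Psi_\infty$ and reproduce the cut-off argument from Theorem \ref{reg} and the $\varepsilon$-regularity lemma: writing $G_{g_k}^s*|\Psi_k|^2=u_1+u_2$ with $u_1$ the local piece, the operator $v\mapsto u_1 v$ has operator norm on $H^{\frac{1}{2}}(B_R^0)\to H^{-\frac{1}{2}}(B_R^0)$ controlled by $\varepsilon^{1/n}$. Choosing $\varepsilon$ small enough, $D_{g_k}-u_1$ is invertible uniformly, and the remaining right-hand side (containing $u_2 h_k$, commutator terms with the cut-off, and $F_k$) tends to zero in $H^{-\frac{1}{2}}$ by the strong $L^p_{loc}$ convergence for $p<\frac{2n}{n-1}$ and Proposition \ref{pro:Fk}. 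This forces $\|h_k\|_{H^{\frac{1}{2}}(B_R^0)}\to 0$.

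Finally, I pass to the limit in $D_{g_k}\Psi_k-(G_{g_k}^s*|\Psi_k|^2)\Psi_k=F_k$ tested against any $\Phi\in C_c^\infty(\mathbb{R}^n)$. The smooth convergence $g_k\to g_{\mathbb{R}^n}$ and identification of spinor bundles through the exponential chart give $D_{g_k}\Phi\to D_{\mathbb{R}^n}\Phi$ in any norm, handling the linear part. For the nonlinear piece, Proposition \ref{propgreen} provides the expansion $G_{g_k}^s(x,y)=G_{g_{\mathbb{R}^n}}^s(x,y)+r_k(x,y)$ with the remainder of strictly better singularity than $|x-y|^{-(n-2s)}$; since $r_k$ and the metric corrections vanish as $k\to\infty$ on any fixed compact, and since strong $L^{\frac{2n}{n-1}}_{loc}$ convergence of $\Psi_k$ together with the Hardy–Littlewood–Sobolev inequality let us pass the convolution through the limit on compact supports, we obtain (\ref{eq:equationRn}). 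I expect the main technical obstacle to be the strong convergence step: namely, establishing the uniform invertibility of $D_{g_k}-u_1$ on varying domains with metrics $g_k$, together with carefully splitting the nonlocal convolution to separate the $\varepsilon$-small local piece from the asymptotically negligible nonlocal piece, while keeping track of the fact that we are working on all of $\mathbb{R}^n$ rather than on a fixed compact manifold.
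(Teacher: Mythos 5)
Your proposal follows the same overall plan as the paper's proof: uniform local bounds from conformal invariance plus elliptic estimates, diagonal extraction to get a weak limit $\Psi_\infty$, an $\varepsilon$-regularity / absorption argument with a cut-off $\beta$ to upgrade weak to strong $H^{1/2}_{loc}$ convergence (where the smallness $\|G_{g_k}^s * |\Psi_k|^2\|_{L^n(B_1^0)} \leq \varepsilon^{1/n}$ provided by the normalization \eqref{eq:Q concentration epsilon} is the key), and passage to the limit in the rescaled equation using the $C^\infty$ convergence $g_k \to g_{\mathbb{R}^n}$ and the Green's function expansion of Proposition \ref{propgreen}. Your handling of the nonlocal term (splitting off the local piece and absorbing it, passing the tail to zero) matches the paper's use of the cut-off $\beta$ and the $L^{\frac{2n}{n+1}}$–H\"older estimate.

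There is one step that does not hold as stated: you claim $\Psi_\infty \in D^{\frac{1}{2}}(\mathbb{R}^n)$ from the global $L^{\frac{2n}{n-1}}$ bound \emph{combined with} the boundedness of $\int \langle D_{g_k}\Psi_k,\Psi_k\rangle\, dv_{g_k}$. The Dirac quadratic form is sign-indefinite — schematically $\int\langle D\Psi,\Psi\rangle = \| \Psi^+\|_{\dot H^{1/2}}^2 - \|\Psi^-\|_{\dot H^{1/2}}^2$ — so its boundedness gives no control on $\| |D_{\mathbb{R}^n}|^{1/2}\Psi_k\|_{L^2}$, and you cannot conclude $\Psi_\infty \in \dot H^{1/2}(\mathbb{R}^n)$ this way. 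The paper takes a different and correct route: first show (as you do in your final step) that $\Psi_\infty$ is a weak solution of \eqref{eq:equationRn} lying in $L^{\frac{2n}{n-1}}(\mathbb{R}^n)$; then note that the right-hand side $(G^s_{g_{\mathbb{R}^n}}*|\Psi_\infty|^2)\Psi_\infty \in L^{\frac{2n}{n+1}}(\mathbb{R}^n) \hookrightarrow \dot H^{-1/2}(\mathbb{R}^n)$, so elliptic regularity for $D_{\mathbb{R}^n}$ yields $|\xi|^{1/2}\widehat{\Psi}_\infty \in L^2$, i.e.\ $\Psi_\infty \in D^{\frac{1}{2}}(\mathbb{R}^n)$. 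You already have all the ingredients for this; you just need to put the ``pass to the limit in the weak formulation'' step \emph{before} the $D^{\frac{1}{2}}$ membership claim, and drop the appeal to the Dirac energy.
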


\begin{proof}
First, the sequence $\Psi_k$ is bounded in $H^{\frac{1}{2}}_{loc}(\R^{n})$, hence there exists $\Psi_{\infty}$ such that, up to subsequence, $\Psi_{k}\rightharpoonup \Psi_{\infty}$ in $H_{loc}^{\frac{1}{2}}(\R^{n})$ and strongly in $L_{loc}^{p}(\R^{n})$ for $1\leq p<\frac{2n}{n-1}$.\\
Now, from the relation (\ref{eq:Psi-psi}), we have 
\begin{equation*}
\limsup_{k\to \infty} \int_{B_{R}^{0}}|\Psi_{k}|^{\frac{2n}{n-1}}\ dv_{g_{k}} \leq \sup_{k\geq 1}  \int_{M}|\psi_{k}|^{\frac{2n}{n-1}}\ dv_{g} < +\infty ,
\end{equation*}
hence $\Psi_{\infty}\in L^{\frac{2n}{n-1}}(\R^{n})$.\\
Next, arguing as in Lemma \ref{lem:PSbounded} and Remark \ref{rem:weaksolution}, we have that $\Psi_{\infty}$ is a weak solution of (\ref{eq:equationRn}), from which we deduce that $\Psi_{\infty}\in D^{\frac{1}{2}}(\R^{n})$.\\
We can now assume without loss of generality that $\Psi_{\infty}=0$, just replacing $\Psi_{n}$ by $\Psi_{n}-\Psi_{\infty}$ and using Lemma \ref{lem:transpose}.\\
But by assumption we have that, given $x\in \R^{3}$, for $k$ big enough we get from (\ref{eq:Q concentration epsilon}) 
$$\int_{B_{1}^{0}}  (G_{g_k}^s*|\Psi_{k}|^{2})^{n} dv_{g_k}  =\int_{B_{R_{k}}(x_{k})} (G_g^s*|\psi_{k}|^{2})^{n} dv_{g}  =\varepsilon . $$
Let $\beta \in C^{\infty}_{0}(\R^{n})$, with $supp(\beta)\in B_{1}^{0}$, we get 
\begin{align}
\|\beta \Psi_{k}\|_{H^{\frac{1}{2}}}&\leq C\left(\|D_{g_{\R^{n}}}(\beta \Psi_{k})\|_{H^{-\frac{1}{2}}}+\|\beta \Psi_{k}\|_{L^{2}}\right)\\
&\leq C\left(\|D_{g_{k}}(\beta \Psi_{k})\|_{H^{-\frac{1}{2}}} + \|(D_{g_{\R^{n}}}-D_{g_{k}})(\beta \Psi_{k})\|_{H^{-\frac{1}{2}}} + \|\beta \Psi_{k}\|_{L^{2}}\right).\notag
\end{align}
We have $\|\beta \Psi_{k}\|_{L^{2}}\to 0$ and, since $g_{k}\to g_{\R^{n}}$ in $C^{\infty}$, we also have $\|(D_{g_{\R^{n}}}-D_{g_{k}})(\beta \Psi_{k})\|_{H^{-\frac{1}{2}}}\to 0.$
For the last term, using Proposition \ref{pro:Fk}, we have
$$\|D_{g_{k}}(\beta \Psi_{k})\|_{H^{-\frac{1}{2}}}  \leq \|(G_{g_{k}}^s*|\Psi_{k}|^{2})\beta\Psi_{k} + \beta F_{k})\|_{H^{-\frac{1}{2}}}+o(1),$$
hence
$$\|D_{g_{k}}(\beta \Psi_{k})\|_{H^{-\frac{1}{2}}}  \leq \|(G_{g_{k}}^s*|\Psi_{k}|^{2})\beta\Psi_{k} )\|_{H^{-\frac{1}{2}}}+o(1),$$
since $\beta F_{k}\to 0$ in $H^{-\frac{1}{2}}$. Finally
\begin{align}
\|\beta \Psi_{k}\|_{H^{\frac{1}{2}}}  & \leq C \|(G_{g_{k}}^s*|\Psi_{k}|^{2})\beta\Psi_{k} )\|_{L^{\frac{2n}{n+1}}}+o(1) \notag \\
&\leq C \|G_{g_k}^{s}*|\Psi_{k}|^{2}\|_{L^{n}}\|\beta \Psi_{k}\|_{L^{\frac{2n}{n-1}}} +o(1)\notag\\
&\leq C\varepsilon^{\frac{1}{n}}\|\beta \Psi_{k}\|_{L^{\frac{2n}{n-1}}} +o(1) \to 0 \notag \; .
\end{align}
\end{proof}

\noindent
We observe that by the regularity Theorem \ref{reg}, we have that indeed $\Psi_\infty\in C^{\infty}(\R^{n})$.  Now, for $\varepsilon>0$ and small enough, as before, there exists $R_{k}>0$ such that $R_{k}\to 0$ and a sequence $x_{k}\in M$, that we can assume converging to $x_{0}\in M$. We consider then a cut-off function $\beta=1$ on $B_{1}(x_{0})$ with $supp(\beta)\subset B_{2}(x_{0})$ and we define $\phi_{k}\in C^{\infty}(M)$ by
\begin{equation}\label{e6}
\phi_{k}=R_{k}^{-\frac{n-1}{2}}\beta (\rho_{k}^{-1})^{*}(\Psi_\infty) .
\end{equation}
We have then the last result of this section:
\begin{lemma}\label{lem:estimates}
Let $(\psi_{k})$ be a (PS) sequence and set $\overline{\psi}_{k}=\psi_{k}-\phi_{k}$. Then, up to a subsequences, 
\begin{equation}\label{eq:psibar}
\overline{\psi}_{k}\rightharpoonup 0 \text{ in } H^{\frac{1}{2}}( M) ;
\end{equation}
\begin{equation}\label{eq:gradpsibar}
\nabla J_{g}(\phi_{k}) \to 0 \text{ and } \; \nabla J_{g}(\overline{\psi}_{k})  \to 0, \text{ in } H^{-\frac{1}{2}}(M).
\end{equation}
Moreover, we have the following energy estimate
\begin{equation}\label{eq:energy}
J_{g}(\overline{\psi}_{k})=J_{g}(\psi_{k})-J_{g_{\R^{n}}}(\Psi_{\infty})+o(1).
\end{equation}
\end{lemma}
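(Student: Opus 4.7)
The plan is to prove the three claims in the order listed, with a Brezis--Lieb-type decomposition adapted to the non-local quartic term playing the central role.

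\textbf{Step 1: weak convergence.} Since we have reduced to the case $\psi_{k}\rightharpoonup 0$, (\ref{eq:psibar}) reduces to showing $\phi_{k}\rightharpoonup 0$ in $H^{\frac{1}{2}}(M)$. The bubble $\phi_{k}$ is supported in a ball of radius $O(R_{k})$ about $x_{0}$, and its $H^{\frac{1}{2}}$-norm is uniformly bounded (essentially by the $D^{\frac{1}{2}}(\R^{n})$-norm of $\Psi_{\infty}$, modulo errors coming from $g_{k}\to g_{\R^{n}}$ in $C^{\infty}$). Testing against a smooth spinor $\Phi$ and splitting the integral into a small fixed neighborhood of $x_{0}$ (where concentration gives $\|\phi_{k}\|_{L^{2}(B_{r}(x_{0}))}\to 0$, hence the pairing vanishes by Cauchy--Schwarz and interpolation) and its complement (where $\phi_{k}$ vanishes identically for $k$ large) yields the weak convergence.

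\textbf{Step 2: energy estimate (\ref{eq:energy}).} Writing $\psi_{k}=\overline{\psi}_{k}+\phi_{k}$, I would expand $J_{g}(\psi_{k})$ and show that every cross term in both the quadratic $\langle D_{g}\cdot,\cdot\rangle$ part and the quartic non-local part is $o(1)$. For the quadratic part, the cross term $\operatorname{Re}\langle D_{g}\overline{\psi}_{k},\phi_{k}\rangle$ vanishes because $\overline{\psi}_{k}\rightharpoonup 0$ in $H^{\frac{1}{2}}(M)$ while $D_{g}\phi_{k}$ can be transferred (via the rescaling of Proposition \ref{pro:Fk}) to a fixed $D_{g_{\R^{n}}}\Psi_{\infty}$ tested against a rescaled $\overline{\psi}_{k}$ that converges weakly to zero in $H^{\frac{1}{2}}_{\text{loc}}(\R^{n})$. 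For the quartic part, the nine terms coming from expanding $|\overline{\psi}_{k}+\phi_{k}|^{2}|\overline{\psi}_{k}+\phi_{k}|^{2}$ against $G_{g}^{s}(x,y)$ reduce to the two diagonal terms plus mixed terms; the mixed terms are controlled by H\"older together with the elliptic bound $\|G_{g}^{s}*f\|_{L^{n}}\leq C\|f\|_{L^{\frac{2n}{n+1}}}$ and the strong $L^{p}$-convergence of $\overline{\psi}_{k}$ to $0$ for subcritical $p$. The pure bubble term converges to $\int_{\R^{n}\times\R^{n}}G_{g_{\R^{n}}}^{s}(x,y)|\Psi_{\infty}(x)|^{2}|\Psi_{\infty}(y)|^{2}\,dxdy$ by the conformal invariance built into the construction together with the local expansion of $G_{g}^{s}$ from Proposition \ref{propgreen}, producing the $J_{g_{\R^{n}}}(\Psi_{\infty})$ contribution.

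\textbf{Step 3: gradients (\ref{eq:gradpsibar}).} The convergence $\nabla J_{g}(\phi_{k})\to 0$ in $H^{-\frac{1}{2}}(M)$ follows by rescaling exactly as in Proposition \ref{pro:Fk}: in the blow-up coordinates the residual equals $\nabla J_{g_{\R^{n}}}(\Psi_{\infty})+o(1)=o(1)$, since $\Psi_{\infty}$ solves (\ref{eq:equationRn}). Combined with the PS condition $\nabla J_{g}(\psi_{k})\to 0$, linearity reduces $\nabla J_{g}(\overline{\psi}_{k})\to 0$ to the interaction estimate
\[
(G_{g}^{s}*|\psi_{k}|^{2})\psi_{k}-(G_{g}^{s}*|\overline{\psi}_{k}|^{2})\overline{\psi}_{k}-(G_{g}^{s}*|\phi_{k}|^{2})\phi_{k}\to 0 \text{ in } H^{-\frac{1}{2}}(M),
\]
which is again a cross-term analysis handled by H\"older estimates of the type used in the proof of Theorem \ref{reg} and the concentration of $\phi_{k}$.

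\textbf{Main obstacle.} The principal difficulty is the non-local quartic term: a direct Brezis--Lieb lemma does not apply to the double integral $\int_{M\times M}G_{g}^{s}(x,y)f(x)f(y)\,dv_{g}(x)dv_{g}(y)$ because the kernel couples values at different points. I would overcome this by coupling the blow-up analysis (which, on the rescaled chart, replaces $G_{g}^{s}$ by $G_{g_{\R^{n}}}^{s}$ via Proposition \ref{propgreen}) with the strong $L^{p}$-convergence of $\overline{\psi}_{k}$ to $0$ for $p<\frac{2n}{n-1}$, so that every mixed quartic term pairs a \textbf{bubble} factor (localized on a shrinking ball, bounded in $L^{\frac{2n}{n-1}}$) against a \textbf{vanishing} factor, and can thereby be shown to be $o(1)$.
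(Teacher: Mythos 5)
Your overall strategy (concentration/dislocation decomposition, with the non-local quartic term handled by pairing bubble factors against vanishing factors) is the right one, and your Steps~2 and~3 are consistent with the paper's sketch, which defers those to references. However Step~1 --- the only part the paper proves in full --- contains two concrete errors.

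First, you assert that $\phi_{k}$ is supported in a ball of radius $O(R_{k})$ about $x_{0}$, so that it vanishes outside a small fixed neighborhood once $k$ is large. This is false: from (\ref{e6}), $\phi_{k}=R_{k}^{-\frac{n-1}{2}}\beta\,(\rho_{k}^{-1})^{*}(\Psi_{\infty})$ with $\beta$ a \emph{fixed} cutoff equal to $1$ on $B_{1}(x^{j})$ and supported in $B_{2}(x^{j})$, so the support of $\phi_{k}$ is the fixed ball $B_{2}(x^{j})$. The mass of $\phi_{k}$ concentrates near $x_{k}$, but the tail over $B_{2}(x^{j})\setminus B_{R_{k}R}(x_{k})$ is not identically zero and must be estimated.

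Second, your control of the pairing on the small ball relies on $\|\phi_{k}\|_{L^{2}(B_{r}(x_{0}))}\to 0$. Unwinding the scaling gives $\|\phi_{k}\|_{L^{2}}^{2}\approx R_{k}\int_{B_{2R_{k}^{-1}}^{0}}|\Psi_{\infty}|^{2}$, so this would follow \emph{if} $\Psi_{\infty}\in L^{2}(\R^{n})$. But all that is established for $\Psi_{\infty}$ is membership in $D^{\frac12}(\R^{n})\subset L^{\frac{2n}{n-1}}(\R^{n})$, and this homogeneous space does not embed in $L^{2}(\R^{n})$: a function decaying like $|x|^{-(n-1)/2}$ lies at the threshold of $L^{\frac{2n}{n-1}}$ but is not square-integrable. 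Pointwise decay of the type exhibited by the explicit ground-state profile would save you, but that decay is only proved (via the classification corollary) for ground states and is not available here. The paper avoids both issues by splitting at a scale $R_{k}R$ with $R$ fixed, bounding the inner piece by $R_{k}^{\frac{n+1}{2}}\int_{B_{R}^{0}}|\Psi_{\infty}|$ (which vanishes as $k\to\infty$), and bounding the outer piece via H\"older by the tail $(\int_{\R^{n}\setminus B_{R}^{0}}|\Psi_{\infty}|^{\frac{2n}{n-1}})^{\frac{n-1}{2n}}$, which is killed by then sending $R\to\infty$. That double-limit structure, using only the $L^{\frac{2n}{n-1}}$-information that is actually available, is essential and is absent from your proposal.
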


\begin{proof}
The proofs of these last three estimates are similar to those in \cite{MaalaouiMartino2019} (Lemma 4.8, 4.9, 4.10) and in \cite{I} (Lemma 5.6, 5.7, 5.8); for this reason, in order to show how to handle our nonlinearity in this situation, we will prove only the first limit (\ref{eq:psibar}).\\
We know that $\psi_{k}\rightharpoonup 0$, therefore we need to study the weak convergence of $\phi_{k}$; now, we know that this is bounded in $H^{\frac{1}{2}}( M)$, thus up to subsequences, it has a weak limit: we have to prove that the limit is zero. In particular, given a test spinor $h\in C^{\infty}( M)$, we will show that
\begin{equation}\label{eq:phikweakzero}
  \int_{M}\langle \phi_{k},h\rangle dv_{g}\to 0.
\end{equation}
Let us fix $R>0$, we will prove two estimates, the first one on $B_{R_{k}R}(x_{k})$ and the second one on $M\setminus B_{R_{k}R}(x_{k})$.
By definition of $\phi_k$ given in (\ref{e6}) and the conformal change, we have
\begin{align}
\int_{B_{R_{k}R}(x_{k})}\langle \phi_{k},h\rangle dv_{g} & = R_{k}^{-\frac{n-1}{2}} \int_{B_{R_{k}R}(x_{k})} \beta \langle (\rho_{k}^{-1})^{*}(\Psi_\infty) ,h\rangle dv_{g} \notag\\
&=  R_{k}^{\frac{n+1}{2}} \int_{B_{R}^{0}} \rho_{k}^{*}(\beta)   \langle \Psi_\infty ,  \rho_{k}^{*}(h)\rangle  dv_{g_{k}} . \notag
\end{align}
Therefore,
$$\left| \int_{B_{R_{k}R}(x_{k})}\langle \phi_{k},h\rangle dv_{g} \right|\leq C_1 R_{k}^{\frac{n+1}{2}} \|h\|_{\infty} \int_{B_{R}^{0}}|\Psi_\infty|dv_{g_{\R^n}}.$$
In the same way, if $k$ is large enough,  we have 
\begin{align}
\int_{M \setminus B_{R_{k}R}(x_{k})}\langle \phi_{k},h\rangle dv_{g} & =  R_{k}^{\frac{n+1}{2}} \int_{B_{3R_{k}^{-1}}^{0}\setminus B_{R}^{0}} \rho_{k}^{*}(\beta)   \langle \Psi_\infty ,  \rho_{k}^{*}(h)\rangle  dv_{g_{k}}   .\notag
\end{align}
Thus,
\begin{align}
\left| \int_{M \setminus B_{R_{k}R}(x_{k})}\langle \phi_{k},h\rangle dv_{g} \right| & \leq C_2 R_{k}^{\frac{n+1}{2}} \|h\|_{\infty} \int_{B_{3R_{k}^{-1}}^{0}\setminus B_{R}^{0}} |\Psi_\infty|dv_{g_{\R^n}} \notag\\
& \leq C_3 \|h\|_{\infty} \left(\int_{B_{3R_{k}^{-1}}^{0}\setminus B_{R}^{0}}  |\Psi_{\infty}|^{\frac{2n}{n-1}}  dv_{g_{\R^{n}}}\right)^{\frac{n-1}{2n}}.\notag
\end{align}
Finally, we put the two estimates together and we get
$$\left| \int_{M}\langle \phi_{k},h\rangle dv_{g} \right|\leq C \|h\|_{\infty}\left( R_{k}^{\frac{n+1}{2}} \int_{B_{R}^{0}}|\Psi_\infty|dv_{g_{\R^n}}  + \left(\int_{B_{3R_{k}^{-1}}^{0}\setminus B_{R}^{0}}  |\Psi_{\infty}|^{\frac{2n}{n-1}}  dv_{g_{\R^{n}}}\right)^{\frac{n-1}{2n}}   \right).$$
Therefore, if $k\to \infty$ and then $R\to \infty$, we obtain (\ref{eq:phikweakzero}). 
\end{proof}

\noindent
\begin{remark}\label{rem:Sphere-R^n}
In order to finalize the proof of Theorem \ref{thmbubble}, we need a last estimate regarding the solutions of equation (\ref{eq:equationRn}). Since Theorem \ref{leasten} addresses an explicit conformal lower bound of the energy of ground state solutions, we will use the result and leave the details to the next section. Here we just notice the following fact: if $\Psi_\infty$ satisfies equation (\ref{eq:equationRn}), then its pull-back by the standard stereographic projection satisfies equation (\ref{sol sphere}). Therefore, by (\ref{eq:low bound sol sphere}) we have that there exists a positive constant $C_{\R^{n}}$, such that
\begin{equation}\label{eq:lowbound R^n}
J_{g_{\R^{n}}}(\Psi_{\infty})\geq C_{\R^{n}} .
\end{equation}
\end{remark}

\noindent
\begin{proof}  (\emph{of Theorem (\ref{thmbubble})})\\
Let $(\psi_{k})$ be a Palais-Smale sequence for $J_{g}$ at level $c$; we will apply a standard iteration procedure. Let 
$$\psi^1_{k}:=\psi_{k} -\psi_{\infty}, $$ 
then by Lemma \ref{lem:transpose} we have
$$J_{g}(\psi^1_{k}) = J_{g}(\psi_{k}) - J_{g}(\psi_{\infty})+o(1) \; .$$
As we saw after Proposition \ref{pro:PSstrong}, we can find a sequence of points $x_{k}^{1} \in M$ converging to some point $x^{1}\in M$, a sequence of real numbers $R_{k}^{1}$ converging to zero, a function $\Psi^1_{\infty}$ solution of (\ref{eq:equationRn}) and its related $\phi^1_{k}$ defined as in (\ref{e6}). Next we define
$$\psi^2_{k}:=\psi^1_{k} - \phi^1_{k} = \psi_{k} - \psi_{\infty} - \phi^1_{k} \; . $$
By equation (\ref{eq:energy}) in Lemma \ref{lem:estimates}, we obtain
$$ J_{g}(\psi^2_{k}) = J_{g}(\psi^1_{k}) - J_{g_{\R^{n}}}(\Psi^1_{\infty})+o(1) = J_{g}(\psi_{k}) - J_{g}(\psi_{\infty})- J_{g_{\R^{n}}}(\Psi^1_{\infty})+o(1) \; . $$
We can repeat this procedure $m$ times, finding $m$ sequences of points $x_{k}^{1},\cdots, x_{k}^{m} \in M$ converging to some points $x^{1},\cdots, x^{m}\in M$, $m$ sequences of real numbers $R_{k}^{1},\cdots, R_{k}^{m}$ converging to zero, $m$ functions $\Psi^1_{\infty},\cdots, \Psi^m_{\infty}$ solutions of (\ref{eq:equationRn}) and the related $\phi^1_{k} ,\cdots, \phi^m_{k}$ defined as in (\ref{e6}), with
$$\psi^{m+1}_{k}:= \psi_{k} - \psi_{\infty} - \sum_{j=1}^{m}\phi_{k}^{j} \; , $$
$$ J_{g}(\psi^{m+1}_{k}) = J_{g}(\psi_{k}) - J_{g}(\psi_{\infty})- \sum_{j=1}^{m}J_{g_{\mathbb{R}^{n}}}(\Psi_\infty^{j}) +o(1) \; . $$
Now, from (\ref{eq:lowbound R^n}) in Remark \ref{rem:Sphere-R^n}, we have that 
$$J_{g_{\mathbb{R}^{n}}}(\Psi_\infty^{j}) \geq C_{\R^{n}} \quad j=1,\ldots , m \; .$$
Therefore, since from Proposition \ref{pro:PSstrong} (PS) sequences at levels strictly below $ \frac{\varepsilon_{0}}{4}$ converge strongly to zero in $H^{\frac{1}{2}}(M)$, we stop the iteration when $c-m C_{\R^{n}} < \frac{\varepsilon_{0}}{4}$, obtaining the thesis.
\end{proof}

\section{Least Energy Solution on the Sphere}

In this section, we will provide the proof of Theorem \ref{leasten}.\\ 
% For the sake of simplicity, we will write $\overline{Y}:=\overline{Y}(M,[g_{0}])$.\\
We start by recalling the conformal invariant $\lambda^{+}(M, [g])$, which was thoroughly studied in\cite{Am1,Am2} in order to study the optimal first eigenvalues of the Dirac operator. It plays the same role as the classical Yamabe invariant but for its spinorial version (we also refer the reader to \cite{YX} for recent results on the spinorial Yamabe problem). One way of defining $\lambda^{+}(M,[g])$ is as follows:
$$\lambda^{+}(M,[g]):=\inf\{\lambda_{1}(D_{h})Vol(h)^{\frac{1}{n}}, h\in [g]\}.$$
It can also be characterized by
$$\lambda^{+}(M, [g])=\inf_{\psi \in C^{\infty}(\Sigma M); \langle D_{g}\psi,\psi\rangle \not=0} \frac{\Big(\int_{M}|D_{g}\psi|^{\frac{2n}{n+1}}\ dv \Big)^{\frac{n+1}{n}}}{|\int_{M} \langle D_{g}\psi,\psi\rangle \ dv |}.$$

\noindent
\begin{proof}  (\emph{of Theorem (\ref{leasten})})\\
If $\psi$ is a non-trivial solution of $(\ref{sol sphere})$, then we have
\begin{align}
\lambda^{+}(S^{n}, [g_{0}])&\leq \frac{\Big(\int_{S^{n}}\Big(|\psi|G^{s}*|\psi|^{2}\Big)^{\frac{2n}{n+1}}\ dv\Big)^{\frac{n+1}{n}}}{\int_{S^{n}}G^{s}*|\psi|^{2} |\psi|^{2}\ dv}\notag\\
&\leq \Big(\int_{S^{n}}\Big(G^{s}*|\psi|^{2}\Big)^{n}\ dv\Big)^{\frac{1}{n}}.\notag
\end{align}
But $u=G^{s}*|\psi|^{2}$ satisfies $P_{g_{0}}^{s}u=|\psi|^{2}$. So if we define the $H^{s}$-norm by 
$$\|u\|_{H^{s}}=\|(P_{g}^{s})^{\frac{1}{2}}u\|_{L^{2}}=\|uP_{g}^{s}u\|_{L^{1}}^{\frac{1}{2}} ,$$
as in Section 2, we have from the Sobolev embedding $H^{s}(M)\hookrightarrow L^{\frac{2n}{n-2s}}(M)=L^{n}(M)$, that
$$\|u\|_{L^{n}}\leq Y_{s}(S^{n}, [g_{0}])^{\frac{1}{2}}\|u\|_{H^{s}},$$
where $Y_{s}(S^{n}, [g_{0}])$ is defined in $(\ref{yamc})$. Thus,
\begin{equation}\label{sob}
\Big(\int_{S^{n}}\Big(G^{s}*|\psi|^{2}\Big)^{n}\ dv\Big)^{\frac{1}{n}}\leq \frac{1}{Y_{s}(S^{n}, [g_{0}])^{\frac{1}{2}}} \Big(\int_{S^{n}}|\psi|^{2}G^{s}*|\psi|^{2}\ dv \Big)^{\frac{1}{2}}.
\end{equation}
In particular, we have that
\begin{equation}\label{low}
4J_{g_0}(\psi)\geq \lambda^{+}(S^{n}, [g_{0}])^{2} Y_{s}(S^{n}, [g_{0}]).
\end{equation}

\noindent
We assume now that $\psi$ is a ground state solution on $(S^{n}, g_{0})$. Then we are in the case of equality in H\"{o}lder's inequalities. Namely,
$$\int_{S^{n}} \Big(|\psi|^{2}G^{s}*|\psi|^{2}\Big)^{\frac{n}{n+1}}\Big(G^{s}*|\psi|^{2}\Big)^{\frac{n}{n+1}} \ dv =\Big(\int_{S^{n}}|\psi|^{2}G^{s}*|\psi|^{2}\ dv\Big)^{\frac{n}{n+1}}\Big(\int_{S^{n}}(G^{s}*|\psi|^{2})^{n}\ dv \Big)^{\frac{1}{n}}.$$
Hence, for $c_{n}=\frac{\int_{S^{n}}(G^{s}*|\psi|^{2})^{n}\ dv}{\int_{S^{n}}|\psi|^{2}G^{s}*|\psi|^{2}\ dv}$, we have
\begin{equation}\label{equality}
c_{n}|\psi|^{2}=\Big(G^{s}*|\psi|^{2}\Big)^{n-1}.
\end{equation}
From the equalities in $(\ref{sob})$ and $(\ref{low})$, we have
$$c_{n}^{\frac{1}{n}}Y_{s}(S^{n},[g_{0}])^{\frac{1}{2}}=\Big(\lambda^{+}(S^{n}, [g_{0}])^{2} Y_{s}(S^{n}, [g_{0}])\Big)^{\frac{n-2}{2n}},$$
Thus $$c_{n}=\frac{\lambda^{+}(S^{n}, [g_{0}])^{n-2}}{ Y_{s}(S^{n}, [g_{0}])}.$$
On the other hand, from $(\ref{equality})$, we have that the function $u=G^{s}*|\psi|^{2}$ satisfies 
$$P_{G_{\R^{n}}^{s}}^{s}u=\frac{1}{c_{n}}u^{n-1}$$
Hence, by the classification results in \cite{CLO}, we have that up to a conformal change, $u$ is constant and hence $|\psi|^{2}$ is constant. In particular, from the case of equality in Hijazi's inequality \cite{Hij1,Hij2}, $\psi$ is a $-\frac{1}{2}$-Killing Spinor on $S^{n}$.
\end{proof}

\noindent
From the conformal invariance of $(\ref{eq})$ and $(\ref{energyfunctional})$, we also have the following
\begin{corollary}
Let $\psi\in H^{\frac{1}{2}}(\R^{n},\mathbb{C}^{N})$ be a non-trivial ground state solution for the equation
$$D_{\R^{n}}\psi=G_{\R^{n}}^{s}*|\psi|^{2}\psi,$$
where $G_{\R^{n}}^{s}$ is the Green's function of the Laplacian on $\R^{n}$, with $N=2^{[\frac{n}{2}]}$. Then there exists $\Phi_{0}\in \mathbb{C}^{N}$, a point $x_{0}\in \R^{n}$ and $\lambda>0$ so that
$$\psi(x)=c_{n}\Big(\frac{\lambda}{\lambda^{2}+|x-x_{0}|^{2}}\Big)^{\frac{n}{2}}\Big(1-\Big(\frac{x-x_{0}}{\lambda}\Big)\cdot \Phi_{0}\Big).$$
\end{corollary}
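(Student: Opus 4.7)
The strategy is to transfer the problem from $\R^{n}$ to $S^{n}$ via stereographic projection, exploit the conformal invariance proved in the introduction to convert a ground state on $\R^{n}$ into a ground state on $(S^{n},g_{0})$, and then invoke Theorem \ref{leasten} to identify it as a $-\tfrac{1}{2}$-Killing spinor up to a conformal diffeomorphism. Finally one pulls the Killing spinor back to $\R^{n}$ and reads off the explicit form.

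More concretely, let $\sigma:S^{n}\setminus\{p\}\to \R^{n}$ denote the standard stereographic projection, so that $(\sigma^{-1})^{*}g_{0}=u^{\frac{4}{n-2s}}g_{\R^{n}}$ with the explicit conformal factor $u(x)=\bigl(\tfrac{2}{1+|x|^{2}}\bigr)^{\frac{n-2s}{2}}$. Given a non-trivial ground state $\psi\in H^{\frac{1}{2}}(\R^{n},\C^{N})$ for the equation on $\R^{n}$, define $\widetilde\psi:=u^{-\frac{n-1}{n-2s}}F_{g_{\R^{n}},g_{0}}(\sigma_{*}\psi)$ on $S^{n}$ using the isometric identification of spinor bundles under a conformal change recalled in the preliminaries. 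By the computation of conformal invariance of $J$ carried out in the introduction (with $2s=n-2$), $\widetilde\psi$ is a ground state for equation $(\ref{sol sphere})$ on $S^{n}$, attaining the value $\overline Y(S^{n},[g_{0}])$.

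Theorem \ref{leasten} then applies and yields a conformal diffeomorphism $f\in\mathrm{Conf}(S^{n},g_{0})$ and a $-\tfrac{1}{2}$-Killing spinor $\Psi\in\Sigma_{g_{0}}S^{n}$ such that $\widetilde\psi=(\det df)^{\frac{n-1}{2n}}F_{f^{*}g_{0},g_{0}}(f^{*}\Psi)$. Unwinding the identifications, $\psi$ is obtained from $\Psi$ by composing the conformal map $\sigma^{-1}$ (producing the Cayley-type factor in the round metric) with $f$ and the spinorial conformal change; because $\mathrm{Conf}(S^{n},g_{0})$ acts transitively with the Möbius group pulled to $\R^{n}$, we may compose with $\sigma$ to write the resulting conformal diffeomorphism of $\R^{n}\cup\{\infty\}$ as a translation plus dilation centered at some $x_{0}\in\R^{n}$ with scale $\lambda>0$ (an inversion at $\infty$ would change the base point but not the form of the solution).

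Finally, Killing spinors on $S^{n}$ are the restrictions of parallel spinors on $\R^{n+1}$, and a direct computation in stereographic coordinates (using that Clifford multiplication on $S^{n}$ transports under $\sigma$ to multiplication by the vector $x$ weighted by the stereographic factor) shows that $\sigma^{*}\Psi(x)$ takes the form $\bigl(\tfrac{2}{1+|x|^{2}}\bigr)^{n/2}(1-x\cdot\Phi_{0})$ for some constant $\Phi_{0}\in\C^{N}$. Applying the dilation-translation $x\mapsto x_{0}+\lambda x$ and absorbing the Jacobian factors coming from $(\det df)^{\frac{n-1}{2n}}$ and $u^{-\frac{n-1}{n-2s}}$ into the conformal factor yields precisely
\[
\psi(x)=c_{n}\Bigl(\frac{\lambda}{\lambda^{2}+|x-x_{0}|^{2}}\Bigr)^{\!\frac{n}{2}}\!\Bigl(1-\Bigl(\frac{x-x_{0}}{\lambda}\Bigr)\cdot\Phi_{0}\Bigr).
\]
The main obstacle is the bookkeeping in the last paragraph: one must carefully combine the spinor bundle isomorphism $F$, the Clifford multiplication transformation under $\sigma$, and the conformal weight $(\det df)^{\frac{n-1}{2n}}$ so that the final formula has the claimed exponent $n/2$ (rather than $(n-1)/2$ or $(n-2s)/2$), reflecting the critical scaling of the non-local nonlinearity. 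Everything else reduces to invoking Theorem \ref{leasten} and the standard explicit description of Killing spinors on the round sphere.
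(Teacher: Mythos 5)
Your approach is exactly the one the paper has in mind: the paper states this corollary immediately after the remark that it follows ``from the conformal invariance of $(\ref{eq})$ and $(\ref{energyfunctional})$'' and gives no further proof, so your elaboration — transfer to $S^{n}$ by stereographic projection, conclude via Theorem \ref{leasten} that the resulting ground state is (up to a conformal diffeomorphism) a $-\tfrac{1}{2}$-Killing spinor, and pull back to $\R^{n}$ reading off the Aubin--Talenti-type profile — is a faithful reconstruction of the intended argument. Your bookkeeping of the exponents is also consistent with the paper's own choice of test spinor in Section 6.1 (where $\Psi=\bigl(\tfrac{1}{1+|x|^{2}}\bigr)^{n/2}(1-x)\cdot\psi_{0}$ is used precisely so that $|\Psi|^{2}\propto(1+|x|^{2})^{-(n-1)}$), confirming that the conformal weight $(n-1)/2$ together with the extra $(1+|x|^{2})^{1/2}$ coming from the Clifford factor $(1-x\cdot\Phi_{0})$ combine to the exponent $n/2$; the discrepancy between $\bigl(\tfrac{2}{1+|x|^{2}}\bigr)^{n/2}$ and $\bigl(\tfrac{\lambda}{\lambda^{2}+|x-x_{0}|^{2}}\bigr)^{n/2}$ at $\lambda=1$, $x_{0}=0$ is just a harmless constant absorbed into $c_{n}$.
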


\section{Brezis-Nirenberg Problem}

\noindent
We focus now on the linearly perturbed problem
$$D_{g}\psi =\lambda \psi +G^{s}*|\psi|^{2} \psi,$$
where $\lambda>0$ and not a spectral value of $D_{g}$. Also, define $H^{+}_{\lambda}$, $H^{-}_{\lambda}$ and $H^{0}_{\lambda}$ to be the positive, negative and null space of $D_{g}-\lambda$ on $H^{\frac{1}{2}}(\Sigma M)$. Notice that zero is a trivial solution for the problem. In fact, one can obtain solutions to the problem if $\lambda<\lambda_{k+1}\in Spec(D_{g})$ and $\lambda$ close to $\lambda_{k+1}$. Indeed, this type of solutions can be obtained using bifurcation theory. For instance, if we define the operator $L_{\lambda}:H^{\frac{1}{2}}(M)\to H^{\frac{1}{2}}(M)$ by
$$L_{\lambda}\psi=(1+|D_{g}|)^{-1}(D_{g}\psi)-(1+|D_{g}|)^{-1}(\lambda \psi+G^{s}*|\psi|^{2}\psi).$$
Then its differential $\nabla L_{\lambda}[0]$ takes the form $\nabla L_{\lambda}h=Ah+C(\lambda,h)$, where $A$ is a self-adjoint Fredholm operator and $C$ is compact (actually, it is a linear self-adjoint operator). Moreover, we have $L_{\lambda}0=0$ for all $\lambda \in \R$. Now, if we take $\lambda_{k}<\lambda_{-}<\lambda_{k+1}<\lambda_{+}$ then we can easily check that the operators $\nabla L_{\lambda_{\pm}}[0]$ are invertible. Moreover, the spectral flow of $\nabla L_{\lambda}[0]$ on $[\lambda_{-},\lambda_{+}]$ is well defined and can be computed explicitly by
\begin{align}
Sf(\nabla L_{\lambda}[0],[\lambda_{-},\lambda_{+}])&=dim(H^{-}_{\lambda_{-}}\cap H^{+}_{\lambda_{+}})-dim(H^{-}_{\lambda_{+}}\cap H^{+}_{\lambda_{-}})\notag\\
&=-dim \ker(D_{g}-\lambda_{k+1})\not=0.\notag
\end{align}
Hence, by Theorem in \cite[Theorem~1]{FPR}, we have that $0$ is a bifurcation point and hence, there exists a nontrivial solution $\phi_{\lambda}$ of $L_{\lambda}\phi_{\lambda}=0$ for $\lambda$ close to $\lambda_{k+1}$. Moreover, $\phi_{\lambda}\to 0$ as $\lambda \to \lambda_{k+1}$.

\noindent
In what follows, we will show the existence of a non-trivial ground state solution $\psi_{\lambda}$ without restriction on $\lambda>0$ as long as $\lambda \not \in Spec(D_{g})$. Moreover, if $0<\lambda \in (\lambda_{k},\lambda_{k+1})$, the solution $\psi_{\lambda}$ can be thought of as the extension of the bifurcation branch $\phi_{\lambda}$ to all the interval $(\lambda_{k},\lambda_{k+1})$. We first start by preparing the variational setting allowing the construction of a minimizing sequence. The setting is very close to the work of Sire and Xu in \cite{YX1} and the bifurcation result is close in nature to the one in \cite{BX} but we will not address the case when $\lambda \in Spec(D_{g})$, although we expect a similar result to hold in our setting.

\noindent
We recall that the energy functional of the problem has the following expression
$$J_{g,\lambda}(\psi)=:J_{\lambda}(\psi)=\frac{1}{2}\int_{M}\langle D \psi,\psi\rangle -\lambda |\psi|^{2}  dv -\frac{1}{4}\int_{M\times M} G^{s}(x,y)|\psi|^{2}(x) |\psi|^{2}(y) dv(x) dv(y).$$
For $\psi\in H^{\frac{1}{2}}$ we will write $\psi = \psi^{+}+\psi^{-} \in H_{\lambda}^{+}\oplus H_{\lambda}^{-}$, if $\lambda$ is not an eigenvalue of $D_{g}$.
\begin{proposition}\label{proptau}
There exits a $C^{1}$-map $\tau: H^{+}_{\lambda}\to H^{-}_{\lambda}$ such that for every $\psi \in H^{+}_{\lambda}$ 
$$J_{\lambda}(\psi+h)<J_{\lambda}(\psi+\tau(\psi)), \forall h\in H^{-}_{\lambda}, h\not=\tau(h).$$
Moreover, $\tau$ satisfies the following properties:
\begin{itemize}
\item[i)] $P^{-}_{\lambda}\Big[D_{g}\tau(\psi)-\Big(\int_{M}G^{s}(x,y)|\tau(\psi)+\psi|^{2}(y)\ dv(y) \Big)(\psi+\tau(\psi))\Big]=0$.
\item[ii)] $\|\tau(\psi)\|_{\lambda}^{2}\leq \frac{1}{2}\int_{M\times M}|\psi|^{2}(x)G^{s}(x,y)|\psi|^{2}(y)\ dv(y)dv(x)$.
\item[iii)] If $K(\psi):=\frac{1}{4}\int_{M\times M} G^{s}(x,y)|\psi|^{2}(x)|\psi|^{2}(y)\ dv(x)dv(y)$, then $$\|\nabla \tau(\psi)\|_{Op}\leq \|\nabla^{2}K(\psi+\tau(\psi))\|_{Op}.$$
\item[iv)] Let $\tilde{J}:H^{+}_{\lambda}\to \R$ defined by $\tilde{J}(\psi):=J_{\lambda}(\psi+\tau(\psi))$. If $(\psi_{k})_{k}$ is a (PS) sequence of $\tilde{J}$, then $(\psi_{k}+\tau(\psi_{k}))_{k}$ is a (PS)-sequence for $J_{\lambda}$ and
$$\|\nabla J_{\lambda}(\psi)\|=\|\nabla J_{\lambda}(\psi+\tau(\psi))\|, \forall \psi \in H^{+}_{\lambda}.$$
\end{itemize}
\end{proposition}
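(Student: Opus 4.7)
The plan is to realize $\tau(\psi)$ as the unique maximizer of $h\mapsto J_{\lambda}(\psi+h)$ on the closed subspace $H^{-}_{\lambda}$, and to read off the four properties from the Euler--Lagrange equation combined with the implicit function theorem.

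Fix $\psi\in H^{+}_{\lambda}$. Since the splitting $H^{\frac{1}{2}}(\Sigma M)=H^{+}_{\lambda}\oplus H^{-}_{\lambda}$ is $\|\cdot\|_{\lambda}$-orthogonal, one has
$$J_{\lambda}(\psi+h)=\tfrac{1}{2}\|\psi\|_{\lambda}^{2}-\tfrac{1}{2}\|h\|_{\lambda}^{2}-K(\psi+h),\qquad h\in H^{-}_{\lambda},$$
with $K$ as defined in (iii). A direct differentiation gives
$$\nabla^{2}K(\phi)(k,k)=2\!\int_{M\times M}\!G^{s}(x,y)\langle\phi(x),k(x)\rangle\langle\phi(y),k(y)\rangle\,dv\,dv+\int_{M}(G^{s}\!\ast\!|\phi|^{2})|k|^{2}\,dv\geq 0,$$
where both terms are nonnegative thanks to $G^{s}\geq 0$ and to the positive-definiteness of the kernel $G^{s}$ (guaranteed by $Y_{s}(M,[g])>0$). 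Hence $K$ is convex and $h\mapsto J_{\lambda}(\psi+h)$ is the sum of the strictly concave term $-\tfrac{1}{2}\|h\|_{\lambda}^{2}$ and the concave term $-K(\psi+h)$, so it is strictly concave on $H^{-}_{\lambda}$. The estimate $-J_{\lambda}(\psi+h)\geq \tfrac{1}{2}\|h\|_{\lambda}^{2}-\tfrac{1}{2}\|\psi\|_{\lambda}^{2}$ provides coercivity, and the weak upper semicontinuity of $h\mapsto J_{\lambda}(\psi+h)$ (via the compact embeddings $H^{\frac{1}{2}}\hookrightarrow L^{p}$ for $p<\tfrac{2n}{n-1}$ combined with the mapping properties of convolution with $G^{s}$) then delivers a unique maximizer $\tau(\psi)$.

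Property (i) is exactly the Euler--Lagrange condition $\langle\nabla J_{\lambda}(\psi+\tau(\psi)),k\rangle=0$ for every $k\in H^{-}_{\lambda}$, rewritten using that the $(D_{g}-\lambda)\psi$ term sits in $H^{+}_{\lambda}$ and is therefore killed by $P^{-}_{\lambda}$. Testing the maximality $J_{\lambda}(\psi+\tau(\psi))\geq J_{\lambda}(\psi)$ and using $K\geq 0$ yields
$$\tfrac{1}{2}\|\tau(\psi)\|_{\lambda}^{2}+K(\psi+\tau(\psi))\leq K(\psi),$$
which implies (ii). For the $C^{1}$-regularity and (iii), I apply the implicit function theorem to $\Phi(\psi,h):=h+P^{-}_{\lambda}\nabla K(\psi+h)$, where $\nabla K$ is the $\|\cdot\|_{\lambda}$-gradient: the partial $\partial_{h}\Phi(\psi,\tau(\psi))=I+P^{-}_{\lambda}\nabla^{2}K(\psi+\tau(\psi))|_{H^{-}_{\lambda}}$ is a nonnegative perturbation of the identity, hence boundedly invertible with inverse of operator norm at most $1$; combined with $\partial_{\psi}\Phi(\psi,\tau(\psi))=P^{-}_{\lambda}\nabla^{2}K(\psi+\tau(\psi))|_{H^{+}_{\lambda}}$, the IFT formula
$$\nabla\tau(\psi)=-\bigl[I+P^{-}_{\lambda}\nabla^{2}K|_{H^{-}_{\lambda}}\bigr]^{-1}\!\circ P^{-}_{\lambda}\nabla^{2}K|_{H^{+}_{\lambda}}$$
gives $\|\nabla\tau(\psi)\|_{Op}\leq\|\nabla^{2}K(\psi+\tau(\psi))\|_{Op}$.

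For (iv), the chain rule gives $\nabla\tilde{J}(\psi)\cdot v=\langle\nabla J_{\lambda}(\psi+\tau(\psi)),v+\nabla\tau(\psi)v\rangle$ for $v\in H^{+}_{\lambda}$; since $\nabla\tau(\psi)v\in H^{-}_{\lambda}$ and (i) forces the $H^{-}_{\lambda}$-component of $\nabla J_{\lambda}(\psi+\tau(\psi))$ to vanish, the second term is zero, so $\nabla\tilde{J}(\psi)$ coincides with the $H^{+}_{\lambda}$-component of $\nabla J_{\lambda}(\psi+\tau(\psi))$, which, by (i) again, equals $\nabla J_{\lambda}(\psi+\tau(\psi))$ itself. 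The identity of norms follows and Palais--Smale sequences transfer along $\psi_{k}\mapsto\psi_{k}+\tau(\psi_{k})$. The main subtlety I expect is the joint use of $G^{s}\geq 0$ and its positive-definiteness to secure convexity of the non-local term $K$; once this is available, the rest is the standard reduction of a strongly indefinite problem to its positive spectral subspace.
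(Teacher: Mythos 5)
Your proof is correct and follows essentially the same strategy as the paper: realize $\tau(\psi)$ as the unique maximizer of the strictly concave, anti-coercive map $h\mapsto J_{\lambda}(\psi+h)$ on $H^{-}_{\lambda}$, derive (i) as the Euler--Lagrange equation, obtain (ii) from $J_{\lambda}(\psi+\tau(\psi))\geq J_{\lambda}(\psi)$, and use the implicit function theorem with the bound $\|(\partial_h\Phi)^{-1}\|_{Op}\leq 1$ for (iii) and the chain rule plus (i) for (iv). The only cosmetic difference is that you phrase the IFT equation via the $\|\cdot\|_{\lambda}$-gradient $\Phi(\psi,h)=h+P^{-}_{\lambda}\nabla K(\psi+h)$, while the paper writes the Euler--Lagrange map $L(h,\psi)$ in terms of $D_{\lambda}$ directly and estimates $\langle T(h),h\rangle\geq\|h\|_{\lambda}^{2}$; the content is the same.
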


\begin{proof}
First notice that the functional
$$h\to J_{\lambda}(\psi+h)=\frac{1}{2}\|\psi\|_{\lambda}^{2}-\frac{1}{2}\|h\|^{2}_{\lambda}-\frac{1}{4}\int_{M}\int_{M}G^{s}(x,y)|\psi+h|^{2}(x)|\psi+h|^{2}(y)\ dv(x) dv(y),$$
defined on $H_{\lambda}^{-}$, is strictly concave and anti-coercive, hence it has a unique maximizer $\tau(\psi)$ and therefore $i)$ is satisfied. Now, since $\tau(\psi)$ is a maximizer of $J_{\lambda}(\psi+\cdot)$ on $H_{\lambda}^{-}$, we have that
$$J_{\lambda}(\psi+\tau(\psi))\geq J_{\lambda}(\psi).$$
It follows that
\begin{align}
\|\tau(\psi)\|_{\lambda}^{2}&\leq \frac{1}{2}\Big( \int_{M\times M}G^{s}(x,y)|\psi|^{2}(x)|\psi|^{2}(y) \ dv(x)dv(y)\notag\\
&\qquad -\int_{M\times M}G^{s}(x,y)|\psi+\tau(\psi)|^{2}(x)|\psi+\tau(\psi)|^{2}(y)\ dv(x)dv(y)\Big)\notag\\
&\leq \frac{1}{2}\int_{M\times M}G^{s}(x,y)|\psi|^{2}(x)|\psi|^{2}(y) \ dv(x)dv(y).\notag
\end{align}
and $ii)$ follows. We consider now the operator $T:=-\nabla^{2}_{h}J_{\lambda}(\psi+\cdot)[\tau(\psi)]:H^{-}_{\lambda}\to H^{-}_{\lambda}$ that can be expressed as
$$T(h)=-D_{\lambda}h+P_{\lambda}^{-}\Big(2\int_{M}G^{s}(x,y)\langle \psi +\tau(\psi),h\rangle\ dv(x) \left(\psi+\tau(\psi)\right)+\int_{M}G^{s}(x,y)|\psi+\tau(\psi)|^{2}dv(x) h\Big).$$
Notice that $T$ is positive definite and 
\begin{align}
\langle T(h),h\rangle &=\|h\|_{\lambda}^{2}+2\int_{M\times M}G^{s}(x,y)\langle \psi+\tau(\psi),h\rangle(x) \langle \psi+\tau(\psi),h\rangle(y)\ dv(x) dv(y)\notag\\
&\quad+\int_{M\times M}G^{s}(x,y)|\psi+\tau(\psi)|^{2}|h|^{2}\ dv(x)dv(y) \geq \|h\|_{\lambda}^{2}.
\end{align} 
Hence, it is invertible and
$$\|T^{-1}\|_{Op}\leq 1.$$
On the other hand, if $$L(h,\psi)=P^{-}_{\lambda}\Big[D_{\lambda}h-\int_{M}G^{s}(x,y)|h+\psi|^{2}(y)\ dv(y) (\psi+h)\Big],$$
then from $i)$, we have $$L(\tau(\psi),\psi)=0.$$
Applying the implicit function theorem yields 
$$\nabla \tau (\psi)\phi=-\Big((\nabla_{h}L)(\tau(\psi),\psi)\Big)^{-1}(\nabla_{\psi}L)(\tau(\psi),\psi)\phi, \text{ for all } \phi\in H^{+}_{\lambda}.$$
But $(\nabla_{h}L)(\tau(\psi),\psi)=T$ and $(\nabla_{\psi}L)(\tau(\psi),\psi)=\nabla^{2}K(\psi+\tau(\psi))$. Hence,
$$\|\nabla \tau(\psi)\|_{Op}\leq \|\nabla^{2}K(\psi+\tau(\psi))\|_{Op}.$$
and therefore $(iii)$ holds. We finish the proof now by differentiating $\tilde{J}$ in order to get
$$\nabla \tilde{J}(\psi)\phi=\nabla J_{\lambda}(\psi+\tau(\psi))[\phi+\nabla \tau(\psi)\phi]$$
But $\nabla \tau(\psi)\phi \in H^{-}_{\lambda}$ and $\tau(\psi)$ is a critical point of $J_{\lambda}(\psi+\cdot)$ restricted to $H^{-}_{\lambda}$. Therefore,
$$\nabla \tilde{J}(\psi)\phi=\nabla J_{\lambda}(\psi+\tau(\psi))\phi, \forall \phi \in H^{+}_{\lambda}.$$
In particular, if $(\psi)_{k}\subset H^{+}_{\lambda}$ is a (PS) sequence for $J_{\lambda}$, then $(\psi_{k}+\tau(\psi_{k}))_{k}$ is a (PS) sequence for $\tilde{J}$.
\end{proof}

\noindent
We claim, next, that $\tilde{J}$ has a mountain-pass geometry. Indeed, we have $\tilde{J}(0)=J(0)=0$. Moreover, if $\psi \in H^{+}_{\lambda}$ with $\|\psi\|_{\lambda}=1$, we have
$$\tilde{J}(t\psi)\geq J_{\lambda}(t\psi)=\frac{t^{2}}{2}-\frac{t^{4}}{4}\int_{M}G^{s}*|\psi|^{2}|\psi|^{2}\ dv.$$
Therefore, there exists $t_{0}>0$ and $\nu_{0}>0$ such that 
$$\tilde{J}(t\psi)\geq 0, \forall 0\leq t\leq t_{0} \text{ and } \tilde{J}(t_{0}\psi)\geq \nu_{0}.$$
In order to find a critical point for $\tilde{J}$ (and hence a critical point for $J_{\lambda}$), we define the min-max level $\delta_{\lambda}$ by setting
$$\delta_{\lambda}:=\inf_{\psi \in H^{+}_{\lambda}\setminus\{0\}}\max_{t>0}\tilde{J}(t\psi).$$
Notice that $\delta_{\lambda}\geq \nu_{0}>0$.
This critical level, if it exists, corresponds to the ground state of $\tilde{J}$ on the Nehari manifold
$$\mathcal{M}=\{\psi \in H^{+}_{\lambda}; \langle \nabla \tilde{J}(\psi),\psi\rangle=0\}.$$
That is, $$\delta_{\lambda}=\inf_{\psi\in \mathcal{M}} \tilde{J}(\psi),$$
as long as $\mathcal{M}\not=\emptyset$.
If $\tilde{J}$ satisfies the (PS) condition and $\mathcal{M}\not=\emptyset$, then $\delta_{\lambda}$ is indeed a critical value for $\tilde{J}$ and hence for $J_{\lambda}$. But, $\tilde{J}$ and $J_{\lambda}$ satisfy the (PS) condition only below $\overline{Y}$. So our objective now, is to show that $\delta_{\lambda}<\overline{Y}$. In the classical setting, one uses a test function (mainly grafting a standard bubble). In our case, some work needs to be done to handle the $\tau$-component of any potential test spinor. To this end, we want to be able to estimate the energy level of a (PS) sequence of $\tilde{J}$ in terms of the energy levels of $J_{\lambda}$.\\
We consider then a $(PS)_{c}$ sequence $(\psi)_{k}$ for $J_{\lambda}$. That is, $J_{\lambda}(\psi_{k})\to c>0$ and $\|\nabla J_{\lambda}(\psi_{k})\|\to 0$. Based on the study of (PS) sequences above, we know that $\|\psi_{k}\|_{\lambda}$ is bounded. Moreover, we have the following properties:
\begin{proposition}\label{prop diff}
Given a $(PS)_{c}$ sequence $(\psi_{k})_{k}$ for $J_{\lambda}$, we have
\begin{itemize}
\item[i)] $\|\psi_{k}^{-}-\tau(\psi_{k})\|_{\lambda}=O\Big(\|\nabla J_{\lambda}(\psi_{k})\|\Big)$.
\item[ii)]$\nabla\tilde{J}(\psi_{k}^{+})\to 0$.
\item[iii)] There exists $t_{k}>0$ such that $t_{k}\psi_{k}^{+}\in \mathcal{M}$. Moreover, $|t_{k}-1|=O\Big(\|\nabla \tilde{J}(\psi_{k}^{+})\|\Big)$.
\end{itemize}
\end{proposition}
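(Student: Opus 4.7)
The plan is to exploit the strong concavity of $J_\lambda$ on $H^-_\lambda$ (for fixed $H^+_\lambda$-component) together with the variational characterization of $\tau$, then propagate the resulting estimates to $\tilde{J}$ via Proposition \ref{proptau}(iv).

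For part (i), I would fix $\psi_k^+$ and study $F_k(h):=J_\lambda(\psi_k^+ + h)$ as a function of $h\in H^-_\lambda$. Since the convolution term $K(\psi)=\tfrac{1}{4}\int\!\!\int G^s(x,y)|\psi|^2|\psi|^2$ is convex (because $G_g^s\geq 0$ and $\nabla^2 K(\psi)[\phi,\phi] = \int (G^s_g*|\psi|^2)|\phi|^2+2\int\!\!\int G^s_g\langle\psi,\phi\rangle(x)\langle\psi,\phi\rangle(y)$ decomposes into two manifestly nonnegative quadratic forms), the functional $F_k$ is strictly concave with $-\nabla^2 F_k\geq I$ relative to $\|\cdot\|_\lambda$. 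The element $\tau(\psi_k^+)$ is the unique critical point of $F_k$, while the (PS) assumption gives $\|\nabla F_k(\psi_k^-)\|_{(H^-_\lambda)^\ast}\leq \|\nabla J_\lambda(\psi_k)\|$. Integrating $\nabla^2 F_k$ along the segment joining $\tau(\psi_k^+)$ to $\psi_k^-$ and applying Cauchy--Schwarz yields
\[
\|\psi_k^- - \tau(\psi_k^+)\|_\lambda \leq \|\nabla F_k(\psi_k^-) - \nabla F_k(\tau(\psi_k^+))\|_{(H^-_\lambda)^\ast}\leq \|\nabla J_\lambda(\psi_k)\|,
\]
which is (i).

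For part (ii), Proposition \ref{proptau}(iv) gives $\nabla\tilde{J}(\psi_k^+)[\phi] = \nabla J_\lambda(\psi_k^+ + \tau(\psi_k^+))[\phi]$ for every $\phi\in H^+_\lambda$. Since $D_\lambda$ preserves the spectral splitting and $H^\pm_\lambda$ are $L^2$-orthogonal, the Dirac contribution to the difference $\nabla J_\lambda(\psi_k^+ + \tau(\psi_k^+))[\phi] - \nabla J_\lambda(\psi_k)[\phi]$ vanishes on $H^+_\lambda$, leaving only $(\nabla K(\psi_k^+ + \tau(\psi_k^+)) - \nabla K(\psi_k))[\phi]$. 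Since $(\psi_k)$ is bounded in $H^{\frac{1}{2}}$ by Lemma \ref{lem:PSbounded}, the map $\nabla K$ is uniformly Lipschitz on the relevant ball via the same Sobolev-type bounds used in Section 3, so this difference is controlled by $C\,\|\psi_k^- - \tau(\psi_k^+)\|_\lambda\|\phi\|_\lambda = O(\|\nabla J_\lambda(\psi_k)\|)\|\phi\|_\lambda$ by (i). Combined with $|\nabla J_\lambda(\psi_k)[\phi]|\leq \|\nabla J_\lambda(\psi_k)\|\|\phi\|_\lambda\to 0$, this proves (ii).

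For part (iii), I would study $g_k(t):=\tilde{J}(t\psi_k^+)$. Differentiating and using that $\tau(t\psi_k^+)\in H^-_\lambda$ is $L^2$-orthogonal to $\psi_k^+$, one obtains
\[
g_k'(t) = t\|\psi_k^+\|_\lambda^2 - \nabla K\bigl(t\psi_k^+ + \tau(t\psi_k^+)\bigr)[\psi_k^+].
\]
The mountain-pass geometry recalled just before the proposition gives $g_k(t)>0$ for small $t>0$, while $\|\tau(t\psi_k^+)\|_\lambda=O(t^2)$ from Proposition \ref{proptau}(ii) forces the quartic contribution to dominate for large $t$, so $g_k(t)\to -\infty$; hence a positive maximizer $t_k$ exists, automatically satisfying $t_k\psi_k^+\in\mathcal{M}$. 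The rate follows from a Taylor expansion $0 = g_k'(t_k) = g_k'(1) + g_k''(\xi)(t_k-1)$ combined with $g_k'(1) = \nabla\tilde{J}(\psi_k^+)[\psi_k^+] = O(\|\nabla\tilde{J}(\psi_k^+)\|)$. The main obstacle is a uniform lower bound on $|g_k''(\xi)|$: the term $\nabla K(t\psi_k^+ + \tau(t\psi_k^+))$ involves the implicit map $\tau$, whose derivative is only crudely bounded by Proposition \ref{proptau}(iii). I would resolve this by working directly at a zero $t_\ast$ of $g_k'$, where the Nehari identity $t_\ast\|\psi_k^+\|_\lambda^2 = \nabla K(t_\ast\psi_k^+ + \tau(t_\ast\psi_k^+))[\psi_k^+]$ reduces $g_k''(t_\ast)$, modulo a controlled perturbative term, to a negative multiple of $\|\psi_k^+\|_\lambda^2$. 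This is uniformly bounded away from zero since the (PS) level $c>0$ prevents $\psi_k^+\to 0$ in $H^{\frac{1}{2}}$, and the same bound gives local uniqueness of $t_k$ near $1$.
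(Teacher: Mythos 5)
Your treatment of (i) is correct and matches the paper's argument in substance: the paper works with the decomposition $z_1=\psi_k^+ + \tau(\psi_k^+)$, $z_2=\psi_k^--\tau(\psi_k^+)$, uses $\langle\nabla J_\lambda(z_1),z_2\rangle=0$, and identifies the nonlinear contribution to $\langle\nabla J_\lambda(\psi_k),z_2\rangle$ as $-\langle\nabla^2 K(z_1+\mu_kz_2)[z_2],z_2\rangle\le 0$ via the mean value theorem, which is precisely your Hessian-integral inequality written one level down. Your (ii) is also fine and essentially the paper's one-line argument (continuity of $\nabla J_\lambda$ on bounded sets plus Proposition~\ref{proptau}(iv)); your explicit split into Dirac and convolution parts just makes the same calculation more verbose.

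The genuine gap is in (iii), in two places. First, the existence claim: your argument that $g_k(t)\to-\infty$ as $t\to\infty$ leans on $\|\tau(t\psi_k^+)\|_\lambda=O(t^2)$ (an \emph{upper} bound) "forcing the quartic contribution to dominate", but the $\tau$-term enters $g_k$ negatively and the bound on $\|\tau\|$ gives no lower bound on $K(t\psi_k^+ +\tau(t\psi_k^+))$; the claim is plausible but not established as stated. Second, and more seriously, the rate estimate. You correctly flag that bounding $g_k''(\xi)$ requires controlling $\nabla\tau$, but your proposed resolution --- that "the Nehari identity reduces $g_k''(t_\ast)$, modulo a controlled perturbative term, to a negative multiple of $\|\psi_k^+\|_\lambda^2$" --- does not withstand scrutiny. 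For the Choquard-type quartic $K$, $\nabla^2K(z)[\,\cdot\,,\psi^+]$ is not a scalar multiple of $\nabla K(z)[\psi^+]$, so the Nehari identity $t_\ast\|\psi^+\|_\lambda^2=\nabla K(z)[\psi^+]$ by itself does not collapse $g_k''(t_\ast)$; the $\nabla\tau(t_\ast\psi_k^+)[\psi_k^+]$ term and the cross terms must be handled explicitly. The paper resolves this by (a) establishing a uniform lower bound $\int_M (G^s*|z_1|^2)|z_1|^2\,dv\ge c_0>0$ (from the (PS) level $c>0$, not merely from a lower bound on $\|\psi_k^+\|_\lambda$), (b) differentiating the Euler--Lagrange equation of Proposition~\ref{proptau}(i) to get the identity $-\langle\nabla\tau(\psi_k^+)[\psi_k^+],w\rangle_\lambda=\langle\nabla^2K(z_k)[z_k+h_k],w\rangle$ with $h_k=\nabla\tau(\psi_k^+)[\psi_k^+]-\tau(\psi_k^+)$, and (c) performing a careful sign-preserving expansion of $f'(1)$ for $f(t)=\langle\nabla\tilde J(t\psi_k^+),t\psi_k^+\rangle$ that exhibits the middle terms as $-2\int G^s*\langle z_k+h_k,z_k\rangle\,\langle z_k+h_k,z_k\rangle-\int G^s*|z_k|^2\,|h_k|^2\le 0$, followed by a dichotomy on whether $\|h_k\|_\lambda$ is bounded away from zero. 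None of this structural machinery appears in your sketch, and without it the uniform negativity of $g_k''$ near $t_\ast$ --- hence the rate $|t_k-1|=O(\|\nabla\tilde J(\psi_k^+)\|)$ --- is not established.
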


\begin{proof}

We start by the proof of $i)$. We let $z_{1}=\psi_{k}^{+}+\tau(\psi_{k}^{+})$ and $z_{2}=\psi_{k}^{-}-\tau(\psi_{k}^{+})$, so that $z_{1}+z_{2}=\psi_{k}$. Recall that
$$\langle \nabla J_{\lambda}(z_{1}),z_{2}\rangle =0.$$
Therefore,
$$-\langle z_{1}^{-},z_{2}\rangle_{\lambda}-\int_{M}G^{s}*|z_{1}|^{2}\langle z_{1},z_{2}\rangle\ dv =0.$$
On the other hand, we have
$$\langle\nabla J_{\lambda}(\psi_{k}),z_{2}\rangle =-\langle \psi_{k}^{-},z_{2}\rangle_{\lambda}-\int_{M}G^{s}*|\psi_{k}|^{2}\langle \psi_{k},z_{2}\rangle \ dv.$$
Hence,
$$\langle \nabla J_{\lambda}(\psi_{k}),z_{2}\rangle =-\|z_{2}\|_{\lambda}^{2}+\int_{M}G^{s}*|z_{1}|^{2}\langle z_{1},z_{2}\rangle\ dv-\int_{M}G^{s}*|\psi_{k}|^{2}\langle \psi_{k},z_{2}\rangle \ dv.$$
Notice now, that
$$\int_{M}G^{s}*|z_{1}|^{2}\langle z_{1},z_{2}\rangle\ dv-\int_{M}G^{s}*|\psi_{k}|^{2}\langle \psi_{k},z_{2}\rangle\ dv=\langle \nabla K(z_{1}),z_{2}\rangle -\langle \nabla K(\psi_{k}),z_{2}\rangle.$$
Thus, there exists $\mu_{k}\in [0,1]$ such that
$$\int_{M}G^{s}*|z_{1}|^{2}\langle z_{1},z_{2}\rangle\ dv-\int_{M}G^{s}*|\psi_{k}|^{2}\langle \psi_{k},z_{2}\rangle=-\langle \nabla^{2}K(z_{1}+\mu_{k}z_{2})[z_{2}],z_{2}\rangle \leq 0.$$
This yields 
$$\|z_{2}\|_{\lambda}\leq \|\nabla J_{\lambda}(\psi_{k})\|\to 0,$$
as claimed in $i)$.\\ 
For the proof of $ii)$, we start with
$$\nabla J_{\lambda}(z_{1})=\nabla J_{\lambda}(\psi_{k}-z_{2}),$$ and since $z_{2}\to 0$, as claimed in $i)$, we have $\nabla J_{\lambda}(z_{1})\to 0$. Therefore,
$$\|\nabla \tilde{J}(\psi_{k}^{+})\|=\|\nabla J_{\lambda}(z_{1})\|\to 0.$$
It remains now to prove $iii)$, which is more involved. First, we claim that there exists $c_{0}>0$ such that
\begin{equation}\label{lowerb}
\int_{M}G^{s}*|z_{1}|^{2}|z_{1}|^{2}\ dv>c_{0}.
\end{equation}
Indeed, we have
$$J_{\lambda}(z_{1})-\frac{1}{2}\langle \nabla J_{\lambda}(z_{1}), z_{1} \rangle=\frac{1}{2}\int_{M}G^{s}*|z_{1}|^{2}|z_{1}|^{2} \ dv.$$
On the other hand, since $z_{2}\to 0$, we have that $J_{\lambda}(z_{1})=J_{\lambda}(\psi_{k})+o(1)=c+o(1)$ and $\langle \nabla J(z_{1}),z_{1}\rangle =o(1)$. Hence,
$$\frac{1}{2}\int_{M}G^{s}*|z_{1}|^{2}|z_{1}|^{2} \ dv=c+o(1),$$
which finishes the proof of the claim. Now we consider the function $f(t):=\langle \nabla \tilde{J}(t\psi_{k}^{+}),t\psi_{k}^{+}\rangle.$
Notice that $f(1)\to 0$ as $k\to \infty$. Moreover, we have
$$f'(1)=\langle \nabla^{2} \tilde{J}(\psi_{k}^{+})[\psi_{k}^{+}],\psi_{k}^{+}\rangle +\langle \nabla \tilde{J}(\psi_{k}^{+}),\psi_{k}^{+}\rangle.$$
But expanding the first term of the previous equation yields
\begin{align}
\langle \nabla^{2}\tilde{J}(\psi_{k}^{+})[\psi_{k}^{+}],\psi_{k}^{+}\rangle &=\|\psi_{k}^{+}\|_{\lambda}^{2}-\langle \nabla^{2}K(\psi_{k}^{+}+\tau(\psi_{k}^{+}))[\psi_{k}^{+}+\nabla \tau(\psi_{k}^{+})[\psi_{k}^{+}]],\psi_{k}^{+}\rangle\notag\\
&=\langle \nabla \tilde{J}(\psi_{k}^{+}),\psi_{k}^{+}\rangle+\langle \nabla K(\psi_{k}^{+}+\tau(\psi_{k}^{+})),\psi_{k}^{+}\rangle\notag\\
&\quad-\langle \nabla^{2}K(\psi_{k}^{+}+\tau(\psi_{k}^{+}))[\psi_{k}^{+}+\nabla \tau(\psi_{k}^{+})[\psi_{k}^{+}]],\psi_{k}^{+}\rangle.\notag
\end{align}
We set then $z_{k}:=\psi_{k}^{+}+\tau(\psi_{k}^{+})$ and $h_{k}:=\nabla \tau(\psi_{k}^{+})[\psi_{k}^{+}]-\tau(\psi_{k}^{+})$. Then we have
\begin{align}
\langle \nabla K(z_{k}),\psi_{k}^{+}\rangle-\langle \nabla^{2}K(z_{k})[z_{k}+h_{k}],\psi_{k}^{+}\rangle&=\langle \nabla K(z_{k}),z_{k}\rangle-\langle \nabla K(z_{k}),\tau(\psi_{k}^{+})\rangle\notag\\
&\quad-\langle \nabla^{2}K(z_{k})[z_{k}+h_{k}],z_{k}+h_{k}\rangle\notag\\
&\quad+\langle \nabla^{2}K(z_{k})[z_{k}+h_{k}],\nabla\tau(\psi_{k}^{+})\psi_{k}^{+}\rangle.\notag
\end{align}
On the other hand, by differentiating $i)$ in Proposition \ref{proptau} with respect to $\psi$, we have
$$-\langle \nabla \tau(\psi_{k}^{+})[\psi_{k}^{+}],w\rangle_{\lambda}=\langle \nabla^{2} K(z_{k})[z_{k}+h_{k}],w\rangle.$$
In particular,
$$-\|\nabla \tau(\psi_{k}^{+})[\psi_{k}^{+}]\|_{\lambda}^{2}=\langle \nabla^{2} K(z_{k})[z_{k}+h_{k}],\nabla \tau(\psi_{k}^{+})[\psi_{k}^{+}]\rangle.$$
Moreover, we have
$$-\|\tau(\psi_{k})^{+}\|^{2}=\langle \nabla K(z_{k}),\tau(\psi_{k}^{+})\rangle\quad \text{ and }\quad -\langle \tau(\psi_{k}^{+}),\nabla \tau(\psi_{k}^{+})[\psi_{k}^{+}]\rangle =\langle \nabla K(z_{k}), \nabla \tau(\psi_{k}^{+})[\psi_{k}^{+}]\rangle.$$
Hence,
\begin{align}
\langle \nabla K(z_{k}),\psi_{k}^{+}\rangle-\langle \nabla^{2}K(z_{k})[z_{k}+h_{k}],\psi_{k}^{+}\rangle&=-\|h_{k}\|_{\lambda}^{2}+\langle \nabla K(z_{k}),z_{k}\rangle+2\langle \nabla K(z_{k}),h_{k}\rangle \notag\\
&\quad-\langle \nabla^{2}K(z_{k})[z_{k}+h_{k}],z_{k}+h_{k}\rangle.\notag
\end{align}
Thus,
$$f'(1)=2f(1)+\langle \nabla K(z_{k}),z_{k}\rangle-\langle \nabla^{2}K(z_{k})[z_{k}+h_{k}],z_{k}+h_{k}\rangle+2\langle \nabla K(z_{k}),h_{k}\rangle -\|h_{k}\|_{\lambda}^{2}.$$
In order to evaluate the sign of $f'(1)$, we need to expand $\langle \nabla K(z_{k}),z_{k}\rangle-\langle \nabla^{2}K(z_{k})[z_{k}+h_{k}],z_{k}+h_{k}\rangle+2\langle \nabla K(z_{k}),h_{k}\rangle$. Indeed,
\begin{align}
&\langle \nabla K(z_{k}),z_{k}\rangle-\langle \nabla^{2}K(z_{k})[z_{k}+h_{k}],z_{k}+h_{k}\rangle+2\langle \nabla K(z_{k}),h_{k}\rangle=\int_{M}G^{s}*|z_{k}|^{2}|z_{k}|^{2}\ dv \notag\\
&\quad-\int_{M}G^{s}*|z_{k}|^{2}|z_{k}+h_{k}|^{2}\ dv -2\int_{M}G^{s}*\langle z_{k}+h_{k},z_{k}\rangle\langle z_{k}+h_{k},z_{k}\rangle \ dv \notag\\
&\quad+2\int_{M}G^{s}*|z_{k}|^{2}\langle z_{k},h_{k}\rangle \ dv\notag\\
&=-2\int_{M}G^{s}*\langle z_{k}+h_{k},z_{k}\rangle\langle z_{k}+h_{k},z_{k}\rangle \ dv-\int_{M}G^{s}*|z_{k}|^{2}|h_{k}|^{2}\ dv\notag\\
&\leq -2\int_{M}G^{s}*\langle z_{k}+h_{k},z_{k}\rangle\langle z_{k}+h_{k},z_{k}\rangle \ dv\leq 0.\notag
\end{align}
Therefore,
$$f'(1)\leq 2f(1)-2\int_{M}G^{s}*\langle z_{k}+h_{k},z_{k}\rangle\langle z_{k}+h_{k},z_{k}\rangle \ dv -\|h_{k}\|^{2}_{\lambda}.$$
But $f(1)\to 0$ as $k\to \infty$, which leads to two cases. Either there exists $\mu_{0}>0$ such that $\|h_{k}\|_{\lambda}^{2}\geq\mu_{0}$ for $k$ large enough, and thus for $k$ large enough
$$f'(1)\leq -\frac{\mu_{0}}{2},$$
or $\|h_{k}\|^{2}_{\lambda}\to 0$ and in that case, $$\int_{M\times M}G^{s}(x,y)\langle z_{k}+h_{k},z_{k}\rangle(x)\langle z_{k}+h_{k},z_{k}\rangle(y) \ dv(x) dv(y)=\int_{M}G^{s}*|z_{k}|^{2}|z_{k}|^{2}\ dv +o(1).$$
Now using $(\ref{lowerb})$, we have the existence of $\mu_{0}>0$ such that
$$2\int_{M}G^{s}*\langle z_{k}+h_{k},z_{k}\rangle\langle z_{k}+h_{k},z_{k}\rangle \ dv \geq \mu_{0}.$$ In conclusion, we have for $k$ large
$$f'(1)\leq -\frac{\mu_{0}}{2}.$$
In particular, $f'(t)<-\frac{\mu_{0}}{4}$ in a small neighborhood of $1$, independent of $k$, of the form $[1-\mu,1+\mu]$ for a certain $\mu>0$ and small but fixed. Using the mean value theorem, we have
$$f(1+\mu)\leq f(1)-\frac{\mu_{0}\mu}{4}<0 \text{ and } f(1-\mu)\geq f(1)+\frac{\mu_{0}\mu}{4}>0.$$
So there exists $t_{k}\in [1-\mu,1+\mu]$ such that $f(t_{k})=0$. Moreover, since $|\frac{1}{f'(t)}|\leq \frac{4}{\mu_{0}}$ for $t\in[1-\mu,1+\mu]$, we have
$$|t_{k}-1|=|f^{-1}(0)-f^{-1}(f(1))|\leq \frac{4}{\mu_{0}}|f(1)|=O\Big(\|\nabla \tilde{J}(\psi_{k}^{+})\|\Big),$$
which finishes the proof.
\end{proof}

\begin{proposition}\label{prop exp}
Assume that $(\psi_{k})_{k}$ is a $(PS)_{c}$ sequence for $J_{\lambda}$ with $c>0$. Then
$$\delta_{\lambda}\leq J_{\lambda}(\psi_{k})+O\Big(\|\nabla J_{\lambda}(\psi_{k})\|^{2}\Big).$$
In particular, if $J_{\lambda}$ satisfies the (PS) condition at the level set $\delta_{\lambda}$ then it has a critical point $\psi$ at that level.
\end{proposition}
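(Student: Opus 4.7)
The strategy is to test the min-max level $\delta_{\lambda}$, which by construction coincides with the Nehari infimum $\inf_{\psi \in \mathcal{M}} \tilde{J}(\psi)$, against the rescaling $t_{k}\psi_{k}^{+}$ of the positive part of the (PS) sequence. All three ingredients needed for this comparison are already assembled in Proposition \ref{prop diff}: the defect $r_{k} := \psi_{k}^{-}-\tau(\psi_{k}^{+})$ satisfies $\|r_{k}\|_{\lambda} = O(\|\nabla J_{\lambda}(\psi_{k})\|)$ by (i); the lifted gradient satisfies $\|\nabla \tilde{J}(\psi_{k}^{+})\| = O(\|\nabla J_{\lambda}(\psi_{k})\|)$ by (ii); and (iii) provides $t_{k}>0$ with $t_{k}\psi_{k}^{+} \in \mathcal{M}$ and $|t_{k}-1| = O(\|\nabla \tilde{J}(\psi_{k}^{+})\|)$.

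The proof then consists of two Taylor expansions, each producing a quadratic remainder. Setting $z_{k} := \psi_{k}^{+}+\tau(\psi_{k}^{+})$, a first-order expansion of $J_{\lambda}$ along the segment from $z_{k}$ to $\psi_{k} = z_{k}+r_{k}$ gives
$$J_{\lambda}(\psi_{k}) = J_{\lambda}(z_{k}) + \langle \nabla J_{\lambda}(z_{k}), r_{k}\rangle + O(\|r_{k}\|_{\lambda}^{2}).$$
Since $\nabla J_{\lambda}(z_{k}) = \nabla J_{\lambda}(\psi_{k}) + O(\|r_{k}\|_{\lambda})$ by the mean value theorem (together with uniform bounds on $\nabla^{2}J_{\lambda}$ on bounded $H^{\frac{1}{2}}$-sets), this yields $J_{\lambda}(\psi_{k}) = \tilde{J}(\psi_{k}^{+}) + O(\|\nabla J_{\lambda}(\psi_{k})\|^{2})$. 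For the second expansion, I use $t \mapsto \tilde{J}(t\psi_{k}^{+})$ at $t=1$ and the estimate $|\langle \nabla \tilde{J}(\psi_{k}^{+}),\psi_{k}^{+}\rangle| \leq \|\nabla \tilde{J}(\psi_{k}^{+})\|\,\|\psi_{k}^{+}\|_{\lambda}$, where $\|\psi_{k}^{+}\|_{\lambda}$ is uniformly bounded by Lemma \ref{lem:PSbounded}, to obtain
$$\tilde{J}(t_{k}\psi_{k}^{+}) = \tilde{J}(\psi_{k}^{+}) + (t_{k}-1)\langle \nabla \tilde{J}(\psi_{k}^{+}),\psi_{k}^{+}\rangle + O((t_{k}-1)^{2}) = \tilde{J}(\psi_{k}^{+}) + O(\|\nabla J_{\lambda}(\psi_{k})\|^{2}).$$
Combining the two displays with $\delta_{\lambda} \leq \tilde{J}(t_{k}\psi_{k}^{+})$ produces the claimed inequality.

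The consequence then follows by producing a (PS) sequence for $J_{\lambda}$ at the level $\delta_{\lambda}$ itself: the mountain-pass geometry of $\tilde{J}$ verified just before the proposition, combined with Ekeland's variational principle, gives a sequence $\varphi_{k} \in H^{+}_{\lambda}$ with $\tilde{J}(\varphi_{k}) \to \delta_{\lambda}$ and $\nabla \tilde{J}(\varphi_{k}) \to 0$; by Proposition \ref{proptau}(iv) the spinors $\varphi_{k}+\tau(\varphi_{k})$ form a $(PS)_{\delta_{\lambda}}$ sequence for $J_{\lambda}$, and the assumed (PS) condition produces a critical point at level $\delta_{\lambda}$, non-trivial since $\delta_{\lambda} \geq \nu_{0} > 0$. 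The main technical point is justifying that all the remainder terms above are truly quadratic in $\|\nabla J_{\lambda}(\psi_{k})\|$; this reduces to uniform operator bounds for $\nabla^{2}J_{\lambda}$ and $\nabla^{2}\tilde{J}$ on $H^{\frac{1}{2}}$-balls, which in turn come from controlling quantities of the form $\int_{M\times M} G_{g}^{s}(x,y)\langle u,v\rangle(x)\langle u,v\rangle(y)\,dv(x)dv(y)$ by $\|u\|_{H^{\frac{1}{2}}}^{2}\|v\|_{H^{\frac{1}{2}}}^{2}$, an estimate that follows from the Sobolev embedding $H^{\frac{1}{2}}(M) \hookrightarrow L^{\frac{2n}{n-1}}(M)$ and the $L^{n}$-bound on $G_{g}^{s}*|\cdot|^{2}$ used repeatedly in Section 4.
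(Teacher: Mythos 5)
Your proof is correct, and it follows the same basic strategy as the paper's — compare $\delta_{\lambda}$ against $\tilde{J}(t_{k}\psi_{k}^{+})$ using Proposition \ref{prop diff} and a second-order Taylor estimate — but the technical path is different in a notable way. The paper introduces $w_{k}=t_{k}\psi_{k}^{+}+\tau(t_{k}\psi_{k}^{+})$, bounds $\|\psi_{k}-w_{k}\|_{\lambda}=O(\|\nabla J_{\lambda}(\psi_{k})\|)$, and then exploits an \emph{exact} orthogonality: since $\tau(t_{k}\psi_{k}^{+})$ maximizes $J_{\lambda}$ over $H^{-}_{\lambda}$ and $t_{k}\psi_{k}^{+}\in\mathcal{M}$, one has $\langle\nabla J_{\lambda}(w_{k}),\psi_{k}-w_{k}\rangle=0$, so a single Taylor expansion $J_{\lambda}(\psi_{k})=J_{\lambda}(w_{k})+O(\|\psi_{k}-w_{k}\|_{\lambda}^{2})$ finishes immediately. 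You instead perform two separate expansions (one for $J_{\lambda}$ from $z_{k}$ to $\psi_{k}$, one for $t\mapsto\tilde{J}(t\psi_{k}^{+})$ at $t=1$) and control the first-order terms by Cauchy--Schwarz, using that each is a product of two quantities of size $O(\|\nabla J_{\lambda}(\psi_{k})\|)$. Both routes give quadratic remainders; the paper's is arguably cleaner because the first-order term vanishes identically rather than being estimated. In fact, you could have used the same trick in your first expansion: since $r_{k}\in H^{-}_{\lambda}$ and $\tau(\psi_{k}^{+})$ is the maximizer over $H^{-}_{\lambda}$, the term $\langle\nabla J_{\lambda}(z_{k}),r_{k}\rangle$ is exactly zero, so no mean-value-theorem argument is needed there. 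Your handling of the ``in particular'' conclusion via Ekeland and Proposition \ref{proptau}(iv) is also fine and is more explicit than what the paper records.
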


\begin{proof}
We will be using the notations of the previous proof. That is, we let $z_{k}=\psi_{k}^{+}+\tau(\psi_{k}^{+})$ and $w_{k}=t_{k}\psi_{k}^{+}+\tau(t_{k}\psi_{k}^{+})$. Then we have from Proposition \ref{prop diff}:
$$\|\psi_{k}-w_{k}\|_{\lambda}\leq\|\psi_{k}-z_{k}\|_{\lambda} +|t_{k}-1|\|\psi_{k}^{+}\|_{\lambda}+\|\tau(\psi_{k}^{+})-\tau(t_{k}\psi_{k}^{+})\|_{\lambda}=O\Big(\|\nabla J_{\lambda}(\psi_{k})\|\Big)+O\Big(\|\nabla \tilde{J}(\psi_{k}^{+})\|\Big).$$
On the other hand,
$$\|\nabla \tilde{J}(\psi_{k}^{+})\|=\|\nabla J_{\lambda}(z_{k})\|=\|\nabla J_{\lambda}(\psi_{k})\|+O\Big(\|z_{k}-\psi_{k}\|_{\lambda}\Big)=O\Big(\|\nabla J_{\lambda}(\psi_{k})\|\Big).$$
In particular, we have 
$$\|\psi_{k}-w_{k}\|_{\lambda}\leq O\Big(\|\nabla J_{\lambda}(\psi_{k})\|\Big).$$
Next, we notice that since $t_{k}\psi_{k}^{+}\in \mathcal{M}$, we have that
$$\langle \nabla J_{\lambda}(w_{k}),\psi_{k}-w_{k}\rangle = \langle \nabla J_{\lambda}(w_{k}), (\psi_{k}-w_{k})^{+}\rangle=(1-t_{k})\langle \nabla \tilde{J}(t_{k}\psi_{k}^{+}),\psi_{k}^{+}\rangle =0.$$
Hence,
\begin{align}
J_{\lambda}(\psi_{k})&=J_{\lambda}(w_{k})+\langle \nabla J_{\lambda}(w_{k}), \psi_{k}-w_{k}\rangle +O\Big(\|\psi_{k}-w_{k}\|^{2}_{\lambda}\Big)\notag\\
&= J_{\lambda}(w_{k})+O\Big(\|\nabla J_{\lambda}(\psi_{k})\|^{2}\Big).
\end{align}
Therefore,
$$\delta_{\lambda}\leq \tilde{J}(t_{k}\psi_{k}^{+})=J_{\lambda}(w_{k})=J_{\lambda}(\psi_{k})+O\Big(\|\nabla J_{\lambda}(\psi_{k})\|^{2}\Big).$$
\end{proof}

\subsection{Test Spinor}

\noindent
We are now ready to construct a test spinor that will allow us to go under the critical energy threshold and hence have compactness of the minimizing Palais-Smale sequence. We will closely follow the construction in \cite{I} and \cite{YX1}. 

\noindent
Consider a constant spinor $\psi_{0}$ on $\R^{n}$ such that $|\psi_{0}|^{2}=a_{n}$, where $a_{n}$ is  a constant satisfying
$$a_{n}^{\frac{n}{n-1}}\omega_{n}=2^{n}\overline{Y}c_{n}^{-\frac{1}{n-1}}.$$
Here, $c_{n}$ is the constant introduced in $(\ref{equality})$. We define now the spinor $$\Psi=\Big(\frac{1}{1+|x|^{2}}\Big)^{\frac{n}{2}}(1-x)\cdot \psi_{0},$$  so that if $f(r)=\frac{1}{1+r^{2}}$, then $|\Psi|^{2}=a_{n}f(|x|)^{n-1}$. Notice that 
$$D_{\R^{n}}\Psi=\frac{n}{2}f\Psi.$$
We fix $\delta>0$ so that $2\delta<i(M)$, the injectivity radius of $M$. We let $\eta$ to be a smooth function on $\R^{n}$ with support in $B_{2\delta}(0)=:B_{2\delta}$ such that $\eta=1$ on $B_{\delta}(0)=:B_{\delta}$. Now, we can define the spinor $\psi_{\varepsilon}(x)=\eta(x)\varepsilon^{-\frac{n-1}{2}}\Psi(\frac{x}{\varepsilon})=\eta(x) \Psi_{\varepsilon}(x)$. Next, we use the Bourguignon-Gauduchon \cite{Bour} trivialization in order to graft the spinor $\psi_{\varepsilon}$ on $M$. Indeed,  we fix $p_{0}\in M$ and $(x_{1},\cdots,x_{n})$ local normal coordinates around $p_{0}$ provided by the exponential map $\exp_{p_{0}}$. That is, there exists a neighborhood $U\subset T_{p_{0}}M=\R^{n}$ and a neighborhood $V\subset M$, such that $\exp_{p_{0}}:U\to V$ is a diffeomorphism.

\noindent
Let $G(p)=(g_{ij}(p))_{ij}$ be the components of the metric at $p$ and $B=G^{-\frac{1}{2}}$. Notice that $B$ is well defined since $G$ is symmetric and positive definite. With these notations, we have that $B^{*}g=g_{\R^{n}}$. Therefore, $B$ defines an isometry as a map $B(p):(T_{\exp^{-1}_{p_{0}}p}U,g_{\R^{n}})\to (T_{p}V,g(p))$. Hence, given an oriented frame $(y_{1},\cdots, y_{n})$ on $U$, we obtain a natural oriented frame on $V$ by taking $(By_{1},\cdots, B y_{n})$. Thus, one has an isomorphism of the $SO(n)$-principal bundle induced by the map $\Phi(y_{1},\cdots, y_{n})=(By_{1},\cdots, B y_{n})$ as described in the diagram below:

\begin{center}

\tikzset{every picture/.style={line width=0.75pt}} %set default line width to 0.75pt        

\begin{tikzpicture}[x=0.75pt,y=0.75pt,yscale=-1,xscale=1]
%uncomment if require: \path (0,421); %set diagram left start at 0, and has height of 421

%Straight Lines [id:da07195601244998207] 
\draw    (211,120) -- (376,120.99) ;
\draw [shift={(378,121)}, rotate = 180.34] [color={rgb, 255:red, 0; green, 0; blue, 0 }  ][line width=0.75]    (10.93,-3.29) .. controls (6.95,-1.4) and (3.31,-0.3) .. (0,0) .. controls (3.31,0.3) and (6.95,1.4) .. (10.93,3.29)   ;
%Straight Lines [id:da599680966116241] 
\draw    (212,212) -- (377,212.99) ;
\draw [shift={(379,213)}, rotate = 180.34] [color={rgb, 255:red, 0; green, 0; blue, 0 }  ][line width=0.75]    (10.93,-3.29) .. controls (6.95,-1.4) and (3.31,-0.3) .. (0,0) .. controls (3.31,0.3) and (6.95,1.4) .. (10.93,3.29)   ;
%Straight Lines [id:da5710018359951619] 
\draw    (182,133.5) -- (182.97,193.5) ;
\draw [shift={(183,195.5)}, rotate = 269.08] [color={rgb, 255:red, 0; green, 0; blue, 0 }  ][line width=0.75]    (10.93,-3.29) .. controls (6.95,-1.4) and (3.31,-0.3) .. (0,0) .. controls (3.31,0.3) and (6.95,1.4) .. (10.93,3.29)   ;
%Straight Lines [id:da5908597733321272] 
\draw    (395,134.5) -- (395.97,194.5) ;
\draw [shift={(396,196.5)}, rotate = 269.08] [color={rgb, 255:red, 0; green, 0; blue, 0 }  ][line width=0.75]    (10.93,-3.29) .. controls (6.95,-1.4) and (3.31,-0.3) .. (0,0) .. controls (3.31,0.3) and (6.95,1.4) .. (10.93,3.29)   ;

% Text Node
\draw (117,102) node [anchor=north west][inner sep=0.75pt]    {$P_{SO}( U,g_{\mathbb{R}^{n}})$};
% Text Node
\draw (384,102) node [anchor=north west][inner sep=0.75pt]    {$P_{SO}( V,g) \subset P_{SO}( M,g)$};
% Text Node
\draw (127,200) node [anchor=north west][inner sep=0.75pt]    {$U\subset T_{p_{0}} M$};
% Text Node
\draw (385,202) node [anchor=north west][inner sep=0.75pt]    {$V\subset M$};
% Text Node
\draw (281,94) node [anchor=north west][inner sep=0.75pt]    {$\Phi $};
% Text Node
\draw (268,181) node [anchor=north west][inner sep=0.75pt]    {$\exp_{p_{0}}$};

\end{tikzpicture}

\end{center}

\noindent
The map $\Phi$ commutes with the right action of $SO(n)$ and hence it induces an isomorphism of spin structures:

\begin{center}
\tikzset{every picture/.style={line width=0.75pt}} %set default line width to 0.75pt        

\begin{tikzpicture}[x=0.75pt,y=0.75pt,yscale=-1,xscale=1]
%uncomment if require: \path (0,421); %set diagram left start at 0, and has height of 421

%Straight Lines [id:da07195601244998207] 
\draw    (211,120) -- (376,120.99) ;
\draw [shift={(378,121)}, rotate = 180.34] [color={rgb, 255:red, 0; green, 0; blue, 0 }  ][line width=0.75]    (10.93,-3.29) .. controls (6.95,-1.4) and (3.31,-0.3) .. (0,0) .. controls (3.31,0.3) and (6.95,1.4) .. (10.93,3.29)   ;
%Straight Lines [id:da599680966116241] 
\draw    (163,211) -- (422,211.99) ;
\draw [shift={(424,212)}, rotate = 180.22] [color={rgb, 255:red, 0; green, 0; blue, 0 }  ][line width=0.75]    (10.93,-3.29) .. controls (6.95,-1.4) and (3.31,-0.3) .. (0,0) .. controls (3.31,0.3) and (6.95,1.4) .. (10.93,3.29)   ;
%Straight Lines [id:da5710018359951619] 
\draw    (105,128.5) -- (105.97,188.5) ;
\draw [shift={(106,190.5)}, rotate = 269.08] [color={rgb, 255:red, 0; green, 0; blue, 0 }  ][line width=0.75]    (10.93,-3.29) .. controls (6.95,-1.4) and (3.31,-0.3) .. (0,0) .. controls (3.31,0.3) and (6.95,1.4) .. (10.93,3.29)   ;
%Straight Lines [id:da5908597733321272] 
\draw    (469,132.5) -- (469.97,192.5) ;
\draw [shift={(470,194.5)}, rotate = 269.08] [color={rgb, 255:red, 0; green, 0; blue, 0 }  ][line width=0.75]    (10.93,-3.29) .. controls (6.95,-1.4) and (3.31,-0.3) .. (0,0) .. controls (3.31,0.3) and (6.95,1.4) .. (10.93,3.29)   ;

% Text Node
\draw (7,102) node [anchor=north west][inner sep=0.75pt]    {$U\times Spin( n) =P_{Spin}( U,g_{\mathbb{R}^{n}})$};
% Text Node
\draw (384,102) node [anchor=north west][inner sep=0.75pt]    {$P_{Spin}( V,g) \subset P_{Spin}( M,g)$};
% Text Node
\draw (75,200) node [anchor=north west][inner sep=0.75pt]    {$U\subset T_{p_{0}} M$};
% Text Node
\draw (431,199) node [anchor=north west][inner sep=0.75pt]    {$V\subset M$};
% Text Node
\draw (275,89) node [anchor=north west][inner sep=0.75pt]    {$\tilde{\Phi }$};
% Text Node
\draw (268,181) node [anchor=north west][inner sep=0.75pt]    {$\exp_{p_{0}}$};

\end{tikzpicture}
\end{center}

\noindent
This leads to an isomorphism between the spin bundles $\Sigma_{g_{\R^{n}}} U$ and $\Sigma_{g} V$. If we let $e_{i}=B(\partial_{x_{i}})$ we then obtain an orthonormal frame $(e_{1},\cdots e_{n})$ of $(TV,g)$. We let $\nabla$ and $\overline{\nabla}$, respectively the Levi-Civita connections on $(TU,g_{\R^{n}})$ and $(TV,g)$. We will keep the same notations for their natural lifts to $\Sigma_{g_{\R^{n}}} U$ and $\Sigma_{g} V$. From now on, if $H\to U$ (resp. $H\to V$) is a smooth bundle over $U$ (resp. over $V$), we let $\Gamma(H)$ be the space of smooth sections of $H$. The Clifford multiplications then satisfy
$$e_{i}\cdot \overline{\psi}=B(\partial_{x_{i}})\cdot \overline{\psi}=\overline{\partial_{x_{i}}\cdot \psi},$$
where here we use the identification that any $\psi \in \Gamma (\Sigma_{g_{\R^{n}}} U)$ corresponds via the previously defined isomorphism to a spinor $\overline{\psi}\in \Gamma(\Sigma_{g} V)$. If $D$ and $\overline{D}$ are the Dirac operators acting on $\Gamma (\Sigma_{g_{\R^{n}}} U)$ and $\Gamma (\Sigma_{g} V)$, then we have for $\psi \in \Gamma(\Sigma U)$ 
$$\bar{D} \overline{\psi}=\overline{D\psi} +W\cdot \overline{\psi}+X\cdot \overline{\psi}+\sum_{i,j}(b_{ij}-\delta_{ij})\overline{\partial_{x_{i}}\cdot \nabla_{\partial_{x_{j}}}\psi},$$
where here, the $b_{ij}$ are such that $e_{i}=\sum_{j} b_{ij}\partial_{x_{j}}$, $W\in \Gamma(Cl(TV))$ and $X\in \Gamma(TV)$ are defined by
$$W=\frac{1}{4} \sum_{i,j,k;\\ i\not=j\not=k\not=i} \sum_{\alpha,\beta} b_{i\alpha}(\partial_{x_{\alpha}}b_{j\beta})b^{-1}_{\beta k}e_{i}\cdot e_{j}\cdot e_{k},$$
and
$$X=\frac{1}{4}\sum_{i,k}(\overline{\Gamma}_{ik}^{i}-\overline{\Gamma}_{ii}^{k})e_{k}=\frac{1}{2}\sum_{i,k}\overline{\Gamma}_{ik}^{i}e_{k}.$$
Using the identification between $x\in \R^{n}$ and $p=\exp_{p_{0}}x\in M$, we can write as in \cite{I,YX1}, that $G=I+O(|x|^{2})$ as $|x|\to 0$. Hence, we have
$$b_{ij}=\delta_{ij}+O(|x|^{2}), \quad W=O(|x|^{3}) \text{ and } X=O(|x|) \text{ as } |x|\to 0.$$
Our test spinor then, will be $\varphi_{\varepsilon}:=\overline{\psi_{\varepsilon}}$. Our ultimate goal in here is to apply Proposition \ref{prop exp} for the test spinor $\varphi_{\varepsilon}$. In order to do that, we need to show that $(\varphi_{\varepsilon})_{\varepsilon}$ is a $(PS)_{c}$ sequence for $J_{\lambda}$. So we start by estimating the gradient of $J_{\lambda}$ at $\varphi_{\varepsilon}$: 
\begin{lemma}\label{lemma1 exp}
For $\varphi_{\varepsilon}$ defined as above, we have
$$\|\nabla J_{\lambda}(\varphi_{\varepsilon})\|_{H_{\lambda}^{*}}\leq \left\{\begin{array}{ll}
O(\varepsilon |\ln(\varepsilon)|^{\frac{2}{3}}), \text{ if } n=3\\
\\
\varepsilon, \text{ if } n\geq 4
\end{array}
\right. .$$
\end{lemma}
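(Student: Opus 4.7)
The plan is to exploit the fact that $\Psi$ satisfies the ground-state equation on $(\mathbb{R}^n,g_{\mathbb{R}^n})$, so that by conformal scaling
$$D_{\mathbb{R}^n}\Psi_{\varepsilon}=(G^{s}_{\mathbb{R}^n}*|\Psi_{\varepsilon}|^{2})\Psi_{\varepsilon},$$
and almost all of the leading order of $\nabla J_{\lambda}(\varphi_{\varepsilon})$ cancels. Using the Bourguignon--Gauduchon trivialization together with the identity $\psi_{\varepsilon}=\eta\Psi_{\varepsilon}$, I would write
$$D_{g}\varphi_{\varepsilon}-(G_{g}^{s}*|\varphi_{\varepsilon}|^{2})\varphi_{\varepsilon}-\lambda\varphi_{\varepsilon}
=E_{\mathrm{cut}}+E_{\mathrm{met}}+E_{\mathrm{Gr}}-\lambda\varphi_{\varepsilon},$$
where $E_{\mathrm{cut}}=\overline{\nabla\eta\cdot\Psi_{\varepsilon}}$ comes from differentiating the cutoff, $E_{\mathrm{met}}=W\cdot\varphi_{\varepsilon}+X\cdot\varphi_{\varepsilon}+\sum_{i,j}(b_{ij}-\delta_{ij})\overline{\partial_{x_{i}}\cdot\nabla_{\partial_{x_{j}}}\psi_{\varepsilon}}$ collects the correction terms from the transference formula (all of order $|x|^{2}$ for the $b_{ij}-\delta_{ij}$ part, $O(|x|)$ for $X$, and $O(|x|^{3})$ for $W$), and
$$E_{\mathrm{Gr}}(x)=\Big(\eta(x)\,\overline{(G_{\mathbb{R}^n}^{s}*|\Psi_{\varepsilon}|^{2})}-\int_{M}G_{g}^{s}(x,y)|\varphi_{\varepsilon}(y)|^{2}\,dv_{g}(y)\Big)\varphi_{\varepsilon}(x)$$
encodes the discrepancy between the Euclidean and geometric Green's functions, which by Proposition \ref{propgreen} admits the splitting $G_{g}^{s}(x,y)=G_{g_{\mathbb{R}^n}}^{s}(x,y)+r(x,y)$ with $|r(x,y)|\le C|x-y|^{-(n-3)}$ inside a normal chart.

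I would then bound $\|\nabla J_{\lambda}(\varphi_{\varepsilon})\|_{H_{\lambda}^{\ast}}$ by testing each summand against an arbitrary $h\in H^{1/2}$ with $\|h\|_{\lambda}\le 1$, and applying Hölder plus the Sobolev embedding $H^{1/2}(M)\hookrightarrow L^{2n/(n-1)}(M)$. The cutoff piece is supported in the annulus $\delta<|x|<2\delta$ where $\Psi_{\varepsilon}$ decays as $\varepsilon^{(n-1)/2}|x|^{-(n-1)}$, giving an $O(\varepsilon^{(n-1)/2})$ contribution, which is negligible. The linear piece $\lambda\varphi_{\varepsilon}$ is controlled by $\|\varphi_{\varepsilon}\|_{L^{2n/(n+1)}}$, of order $\varepsilon$ when $n\ge 4$ and of order $\varepsilon|\ln\varepsilon|^{2/3}$ when $n=3$ (the logarithm arising because the integral $\int_{0}^{\delta/\varepsilon}f(r)^{n(n-1)/(n+1)}r^{n-1}\,dr$ scaled back to $M$ is only logarithmically divergent in dimension three). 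The metric terms are estimated similarly, again giving $\varepsilon$ in high dimensions and $\varepsilon|\ln\varepsilon|^{2/3}$ at $n=3$ because of the factor $|x|$ (resp.\ $|x|^2$) multiplying derivatives of $\Psi_{\varepsilon}$.

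The main technical obstacle is the Green's function piece $E_{\mathrm{Gr}}$. I would split the integral defining it over $M\times M$ into the diagonal region inside the normal chart, where one uses $G_{g}^{s}=G_{\mathbb{R}^n}^{s}+r$ so that the leading part cancels against $\eta\cdot(G_{\mathbb{R}^n}^{s}*|\Psi_{\varepsilon}|^{2})$ (modulo the tail of $|\Psi_{\varepsilon}|^{2}$ outside the support of $\eta$, which decays at the desired polynomial rate), and the off-diagonal region where $G_{g}^{s}$ is bounded. The remainder is controlled by estimates of the form
$$\int_{B_{2\delta}\times B_{2\delta}}\frac{|\varphi_{\varepsilon}(y)|^{2}}{|x-y|^{n-3}}\,dv(y)\cdot |\varphi_{\varepsilon}(x)|\,|h(x)|\,dv(x),$$
which, after Hölder and the Sobolev bound on $h$, reduces to a convolution-type integral whose scaling in $\varepsilon$ matches the other error terms. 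Combining all estimates yields the claimed bound, with the logarithm entering only when $n=3$ through the low-dimensional endpoint integrals.
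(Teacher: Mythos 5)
Your decomposition matches the paper's strategy exactly: split the residual into the cutoff term $\overline{\nabla\eta\cdot\Psi_\varepsilon}$, the Bourguignon--Gauduchon metric corrections ($W$, $X$, $b_{ij}-\delta_{ij}$), the Green's-function discrepancy, and the linear piece $\lambda\varphi_\varepsilon$, then estimate each in $L^{2n/(n+1)}\hookrightarrow H^{-1/2}$. The paper simply lists these as seven terms $A_1,\ldots,A_7$ (further splitting the Green's function discrepancy into a near-diagonal piece and two off-diagonal pieces, and $A_7$ into $B_1+B_2$), and identifies the dominant contribution to be $\|\varphi_\varepsilon\|_{L^{2n/(n+1)}}$ from the $\lambda\varphi_\varepsilon$ term, as you do.

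There is, however, one concrete error: you quote the remainder estimate from Proposition~\ref{propgreen} as $|r(x,y)|\le C|x-y|^{-(n-3)}$, but the correct exponent is $n-2s-1$, and with the problem's scaling $2s=n-2$ this equals $1$, independent of $n$. Equivalently, $G^{s}_{g_{\R^n}}(x,y)\sim |x-y|^{-(n-2s)}=|x-y|^{-2}$ for all $n$, not $|x-y|^{-(n-2)}$ as one would have for the standard ($s=1$) conformal Laplacian; this is the whole point of the choice $2s=n-2$. Your stated exponent coincides with the correct one only at $n=4$; for $n\ge 5$ it is more singular than the truth and would degrade the estimate if carried through literally, while for $n=3$ it would falsely suggest the remainder is bounded. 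Since the near-diagonal contribution $A_1$ is controlled by $\|\Psi_\varepsilon\|^2_{L^{2n/(n+1)}(B_{2\delta})}$ via a Hardy--Littlewood--Sobolev/Hölder argument with the kernel $1/|x-y|$, you should substitute the exponent $1$ throughout this part of the argument. (The smaller issues — the extraneous factor of $\eta$ in your expression for $E_{\mathrm{Gr}}$ given that $\eta$ is already absorbed into $\varphi_\varepsilon$, and attributing a logarithm at $n=3$ to the metric terms which actually contribute only $O(\varepsilon^{(n-1)/2})$ — do not affect the final bound.)
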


\begin{proof}
We need to estimate the $H_{\lambda}^{*}$-norm of $\varphi_{\varepsilon}$ and $R_{\varepsilon}=\overline{D}\varphi_{\varepsilon}-\Big(G_{s}*|\varphi_{\varepsilon}|^{2}\Big)\varphi_{\varepsilon}$, where $H^{*}_{\lambda}$ here is the dual of the space $H^{\frac{1}{2}}(M)$ equipped with the norm $\|\cdot \|_{\lambda}$. One notices that since $\lambda \not \in Spec(D_{g})$, the $\|\cdot \|_{\lambda}$- norm is equivalent to the usual $H^{\frac{1}{2}}(M)$ norm. Hence, by the continuous embedding $L^{\frac{2n}{n+1}}(M)\hookrightarrow H^{-\frac{1}{2}}(M)$, we have that for all $\psi \in L^{\frac{2n}{n+1}}(M)$, 
$$\|\psi\|_{H_{\lambda}^{*}}\leq C\|\psi\|_{L^{\frac{2n}{n+1}}}.$$
Therefore we have
\begin{align}
\|\varphi_{\varepsilon}\|_{H^{*}_{\lambda}}&\leq C \|\varphi_{\varepsilon}\|_{L^{\frac{2n}{n+1}}}=\Big(\int_{B_{2\delta}}|\varphi_{\varepsilon}|^{\frac{2n}{n+1}}\ dv\Big)^{\frac{n+1}{2n}}\notag\\
&\leq C \Big(\int_{|x|\leq 2\delta} |\psi_{\varepsilon}|^{\frac{2n}{n+1}}\ dx \Big)^{\frac{n+1}{2n}}\notag\\
&\leq C\varepsilon\Big(\int_{0}^{\frac{2\delta}{\varepsilon}} \frac{r^{n-1}}{(1+r^{2})^{\frac{n(n-1)}{n+1}}}\ dr \Big)^{\frac{n+1}{2n}} \notag\\
&\leq C\left\{\begin{array}{ll}
\varepsilon |\ln(\varepsilon)|^{\frac{2}{3}} \text{ if } n=3\\
\varepsilon \text{ if } n\geq 4 
\end{array}
\right..\label{phieps}
\end{align}
Next, we move to estimating $R_{\varepsilon}$. Indeed, we have
\begin{align}
\overline{D}\varphi_{\varepsilon}&=\overline{D\psi_{\varepsilon}}+W\cdot \overline{\psi_{\varepsilon}}+X\cdot \overline{\psi}+\sum_{i,j}(b_{ij}-\delta_{ij})\overline{\partial_{x_{i}}\cdot \nabla_{\partial_{x_{j}}}\psi_{\varepsilon}}\notag\\
&=\Big(\int_{\R^{n}}G_{\R^{n}}^{s}(x,y)|\Psi_{\varepsilon}(y)|^{2}\ dy\Big) \varphi_{\varepsilon}+(\nabla\eta(x)+X) \cdot \varphi_{\varepsilon}+W\cdot \varphi_{\varepsilon}\notag\\
&\quad+\sum_{i,j}(b_{ij}-\delta_{ij})\overline{\partial_{x_{i}}\cdot \nabla_{\partial_{x_{j}}}\psi_{\varepsilon}}.\label{decomp}
\end{align}
On the other hand,
\begin{align}
\overline{D \psi_{\varepsilon}}-(G^{s}_{g}*|\varphi_{\varepsilon}|^{2})\varphi_{\varepsilon}&=\Big(\int_{\R^{n}}G_{\R^{n}}^{s}(x,y)|\Psi_{\varepsilon}(y)|^{2}\ dy \Big)\varphi_{\varepsilon}-\Big(\int_{M}G_{g}^{s}(x,y)|\varphi_{\varepsilon}|^{2}\ dy\Big) \varphi_{\varepsilon}+\nabla\eta(x) \cdot \varphi_{\varepsilon}\notag\\
&=\Big(\int_{|x-y|<\frac{\delta}{2}}[G_{\R^{n}}^{s}(x,y)-G_{g}^{s}(x,y)]|\varphi_{\varepsilon}(y)|^{2}\ dy \Big) \varphi_{\varepsilon}\notag\\
&\quad+\Big(\int_{|x-y|>\frac{\delta}{2}}G_{\R^{n}}^{s}(x,y)|\Psi_{\varepsilon}|^{2} dy\Big) \varphi_{\varepsilon}-\Big(\int_{|x-y|>\frac{\delta}{2}}G_{g}^{s}(x,y)|\varphi_{\varepsilon}|^{2}\ dy \Big) \varphi_{\varepsilon}\notag\\
&\quad +\nabla \eta \cdot \varphi_{\varepsilon}.\notag
\end{align}
This leads to 
\begin{align}
\overline{D}\varphi_{\varepsilon}-(G_{g}^{s}*|\varphi_{\varepsilon}|^{2})\varphi_{\varepsilon}&=\Big(\int_{|x-y|<\frac{\delta}{2}}[G_{\R^{n}}^{s}(x,y)-G_{g}^{s}(x,y)]|\varphi_{\varepsilon}(y)|^{2}\ dy \Big) \varphi_{\varepsilon}\notag\\
&\quad+\Big(\int_{|x-y|>\frac{\delta}{2}}G_{\R^{n}}^{s}(x,y)|\Psi_{\varepsilon}|^{2} dy\Big) \varphi_{\varepsilon}-\Big(\int_{|x-y|>\frac{\delta}{2}}G_{g}^{s}(x,y)|\varphi_{\varepsilon}|^{2}\ dy \Big) \varphi_{\varepsilon} \notag\\
&\quad +\nabla \eta \cdot \varphi_{\varepsilon} + W\cdot \overline{\psi_{\varepsilon}}+X\cdot \overline{\psi}+\sum_{i,j}(b_{ij}-\delta_{ij})\overline{\partial_{x_{i}}\cdot \nabla_{\partial_{x_{j}}}\psi_{\varepsilon}}\notag\\
&=A_{1}+A_{2}+A_{3}+A_{4}+A_{5}+A_{6}+A_{7}.\notag
\end{align}
We will estimate now the terms $A_{i}, i=1,\cdots,7$. Indeed, for $A_{4}$, we have
\begin{align}
\|A_{4}\|_{H^{*}_{\lambda}}&\leq C\|A_{4}\|_{L^{\frac{2n}{n+1}}}=C\Big(\int_{B_{2\delta}}|\overline{\nabla \eta \cdot \Psi_{\varepsilon}}|^{\frac{2n}{n+1}}\ dv \Big)^{\frac{n+1}{2n}}\notag\\
&\leq C\Big(\int_{\delta\leq |x|\leq 2\delta}|\Psi_{\varepsilon}|^{\frac{2n}{n+1}}\ dx \Big)^{\frac{n+1}{2n}}\notag\\
&\leq C \varepsilon \Big(\int_{\frac{\delta}{\varepsilon}}^{\frac{2\delta}{\varepsilon}}\frac{r^{n-1}}{(1+r^{2})^{\frac{n(n-1)}{(n+1)}}}\ dr \Big)^{\frac{n+1}{2n}}\leq C \varepsilon^{\frac{n-1}{2}}.\notag
\end{align}
For $A_{1}$, we use Proposition \ref{propgreen}  in Section 2 in order to have
\begin{align}
\|A_{1}\|_{H^{*}_{\lambda}}&\leq C\|A_{1}\|_{L^{2n}{n+1}}\leq C\Big(\int_{B_{2\delta}}\Big(\int_{B_{2\delta}}\frac{1}{|x-y|}|\Psi_{\varepsilon}(y)|^{2}\ dy |\Psi_{\varepsilon}(x)|\ \Big)^{\frac{2n}{n+1}}dx\Big)^{\frac{n+1}{2n}}\notag\\
&\leq C \|\Psi_{\varepsilon}\|_{L^{\frac{2n}{n-1}}}\int_{B_{2\delta}} \Big(\int_{B_{2\delta}}\frac{1}{|x-y|}|\Psi_{\varepsilon}(y)|^{2}\ dy\Big)^{n} \ dx \Big)^{\frac{1}{n}}\notag\\
&\leq C\|\Psi_{\varepsilon}\|_{L^{\frac{2n}{n+1}}(B_{2\delta})}^{2}\notag\\
&\leq  C\left\{\begin{array}{ll}
\varepsilon^{2} |\ln(\varepsilon)|^{\frac{4}{3}} \text{ if } n=3\\
\varepsilon^{2} \text{ if } n\geq 4
\end{array}
\right..\notag
\end{align}
For $A_{2}$, we use the fact that the Green's function is bounded outside of the diagonal. Thus, 
\begin{align}
\|A_{2}\|_{H^{*}_{\lambda}}&\leq C \|\Psi_{\varepsilon}\|_{L^{2}}^{2}\|\varphi_{\varepsilon}\|_{L^{\frac{2n}{n+1}}}\notag\\
& \leq C \varepsilon^{2}.\notag
\end{align}
A similar inequality holds for $\|A_{3}\|_{H^{*}_{\lambda}}$. On the other hand,
\begin{align}
\|A_{5}\|_{H^{*}_{\lambda}}&\leq C \Big(\int_{B_{2\delta}}|W|^{\frac{2n}{n+1}}|\varphi_{\varepsilon}|^{\frac{2n}{n+1}}\ dv \Big)^{\frac{n+1}{n}} \leq C \Big( \int_{|x|\leq 2\delta} |x|^{\frac{6n}{n+1}}|\Psi_{\varepsilon}|^{\frac{2n}{n+1}}\ dx\Big)^{\frac{n+1}{2n}} \notag\\
&\leq C \varepsilon^{4} \int_{0}^{\frac{2\delta}{\varepsilon}} \frac{r^{\frac{6n}{n+1}+n-1}}{(1+r^{2})^{\frac{n(n-1)}{n+1}}}\ dr \Big)^{\frac{n+1}{2n}}\notag\\
&\leq C \left\{\begin{array}{ll}
\varepsilon^{\frac{n-1}{2}} \text{ if } 3\leq n\leq 8\\
\varepsilon^{4}|\ln(\varepsilon)|^{\frac{5}{9}} \text{ if } n=9\\
\varepsilon^{4} \text{ if } n\geq 10
\end{array}
\right..\notag
\end{align}
Similarly for $A_{6}$ we have
\begin{align}
\|A_{6}\|_{H^{*}_{\lambda}}&\leq C \Big(\int_{B_{2\delta}}|X|^{\frac{2n}{n+1}}|\varphi_{\varepsilon}|^{\frac{2n}{n+1}}\ dv \Big)^{\frac{n+1}{2n}}\notag\\
&\leq C \Big(\int_{|x|\leq 2\delta}|x|^{\frac{2n}{n+1}}|\psi_{\varepsilon}|^{\frac{2n}{n+1}}\ dx \Big)^{\frac{n+1}{2n}}\notag\\
&\leq C  \varepsilon^{2} \Big(\int_{0}^{\frac{2\delta}{\varepsilon}} \frac{r^{\frac{2n}{n+1}+m-1}}{(1+r^{2})^{\frac{n(n-1)}{n+1}}}\ dx \Big)^{\frac{n+1}{2n}}\notag\\
&\leq C \left\{\begin{array}{ll}
\varepsilon^{\frac{n-1}{2}} \text{ if } n=3,4\\
\varepsilon^{2}|\ln(\varepsilon)|^{\frac{3}{5}} \text{ if } n=5\\
\varepsilon^{2} \text{ if } n\geq 6
\end{array}
\right..\notag
\end{align}
It remains now to estimate $A_{7}$. We will write $A_{7}=B_{1}+B_{2}$, where 
$$B_{1}:=\eta\sum_{i,j}(b_{ij}-\delta_{ij})\overline{\partial_{x_{i}}\cdot \nabla_{\partial_{x_{j}}}\Psi_{\varepsilon}}\quad \text{ and  }\quad B_{2}:=\sum_{i,j}(b_{ij}-\delta_{ij})(\partial_{x_{j}}\eta)\overline{\partial_{x_{i}}\cdot \Psi_{\varepsilon}}.$$
Notice that since $|\nabla \Psi|\leq C f(r)^{\frac{n}{2}}$, we have
\begin{align}
\|B_{1}\|_{H^{*}_{\lambda}}&\leq C \Big(\int_{|x|\leq 2\delta}|x|^{\frac{4n}{n+1}}|\nabla \Psi_{\varepsilon}|^{\frac{2n}{n+1}}\ dx \Big)^{\frac{n+1}{2n}}\notag\\
& \leq C \varepsilon^{2}\Big(\int_{0}^{\frac{2\delta}{\varepsilon}}\frac{r^{\frac{4n}{n+1}+n-1}}{(1+r^{2})^{\frac{n^{2}}{n+1}}}\ dr \Big)^{\frac{n+1}{2n}}\notag\\
&\leq \left\{\begin{array}{ll}
\varepsilon^{\frac{n-1}{2}} \text{ if } n=3,4\\
\varepsilon^{2}|\ln(\varepsilon)|^{\frac{3}{5}} \text{ if } n=5\\
\varepsilon^{2} \text{ if } n\geq 6
\end{array}
\right..\notag
\end{align}
We finish now by estimating $B_{2}$: 
\begin{align}
\|B_{2}\|_{H^{*}_{\lambda}}&\leq C \Big(\int_{|x|\leq 2\delta}|x|^{\frac{6n}{n+1}}|\Psi_{\varepsilon}|^{\frac{2n}{n+1}}\ dx \Big)^{\frac{n+1}{2n}}\notag\\
&\leq C  \left\{\begin{array}{ll}
\varepsilon^{\frac{n-1}{2}} \text{ if } 3\leq n\leq 8\\
\varepsilon^{4}|\ln(\varepsilon)|^{\frac{5}{9}} \text{ if } n=9\\
\varepsilon^{4} \text{ if } n\geq 10
\end{array}
\right..\notag
\end{align}
All the previous estimates can be summarized as follows:
\begin{equation}\label{Reps}
\|R_{\varepsilon}\|_{H^{*}_{\lambda}}\leq C\left\{\begin{array}{ll}
\varepsilon^{\frac{n-1}{2}} \text{ if } n=3,4\\
\varepsilon^{2}|\ln(\varepsilon)|^{\frac{3}{5}} \text{ if } n=5\\
\varepsilon^{2} \text{ if } n\geq 6
\end{array}
\right..
\end{equation}
Combining $(\ref{phieps})$ and $(\ref{Reps})$ yields the desired result.
\end{proof}

\noindent
After observing that $(\varphi_{\varepsilon})_{\varepsilon}$ is indeed a $(PS)$ sequence with a precise estimate on $\|\nabla J_{\lambda}(\varphi_{\varepsilon})\|_{H^{*}_{\lambda}}$, we proceed now to estimate the energy.
\begin{lemma}\label{lemma 2 exp}
For $\varphi_{\varepsilon}$ defined as above, we have
\begin{itemize}
\item[i)] $\|\varphi_{\varepsilon}\|_{L^{2}}=Q(\varepsilon)+ C\left\{\begin{array}{ll}
O(\varepsilon^{n-1}) \text{ if } n=3\\
O(\varepsilon^{3}|\ln(\varepsilon)|) \text{ if } n=4\\
O(\varepsilon^{3}) \text{ if } n\geq 5
\end{array}
\right.
$,\\ where $ Q(\varepsilon)=\varepsilon a_{n} \omega_{n-1}\int_{0}^{\infty} \frac{r^{n-1}}{(1+r^{2})^{n-1}}\ dr$.
\item[ii)] $J_{g}(\varphi_{\varepsilon})\leq \overline{Y}+O(\varepsilon^{2}).$
\end{itemize}
\end{lemma}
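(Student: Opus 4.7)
The basic idea is that $\varphi_\varepsilon$ is a grafting of the Euclidean ground state $\Psi_\varepsilon(x) = \varepsilon^{-(n-1)/2}\Psi(x/\varepsilon)$ onto $(M,g)$ using Bourguignon--Gauduchon trivialisation, so on the support $B_{2\delta}$ all objects (metric, Dirac operator, Green's function, volume form) agree with their Euclidean counterparts up to explicit remainders controlled by the normal-coordinate expansion. For (i), I would directly rescale the $L^{2}$ integral. For (ii), I would use the decomposition of $\overline{D}\varphi_\varepsilon$ already established in (\ref{decomp}), together with Proposition \ref{propgreen} to replace $G_{g}^{s}$ by $G_{\mathbb{R}^{n}}^{s}$, so that the ``principal part'' of $J_{g}(\varphi_\varepsilon)$ becomes $J_{g_{\mathbb{R}^{n}}}(\Psi_\varepsilon)$, and the identification of $\Psi$ as a ground state (Theorem \ref{leasten}) pins down the limit to $\overline{Y}$.

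\textbf{Part (i).} In normal coordinates around $p_{0}$, $dv_{g}=(1+O(|x|^{2}))\,dx$ and $|\varphi_\varepsilon(x)|^{2}=\eta(x)^{2}\, a_{n}\,\varepsilon^{-(n-1)}(1+|x/\varepsilon|^{2})^{-(n-1)}$. I would split
\[
\|\varphi_\varepsilon\|_{L^{2}}^{2} \;=\; \int_{\mathbb{R}^{n}}\!a_{n}\varepsilon^{-(n-1)}(1+|x/\varepsilon|^{2})^{-(n-1)}\,dx \;+\; T_\varepsilon \;+\; G_\varepsilon,
\]
where $T_\varepsilon$ collects the truncation (the region where $\eta^{2}\neq 1$ and the extension to $\mathbb{R}^{n}$) and $G_\varepsilon$ collects the geometric correction coming from $(\sqrt{\det g}-1)$. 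Substituting $y=x/\varepsilon$ turns the first term into $Q(\varepsilon)$. A direct calculation gives $|T_\varepsilon|\leq C\varepsilon^{n-1}$ (the tail $\int_{|y|>\delta/\varepsilon}(1+|y|^{2})^{-(n-1)}dy$ decays like $(\delta/\varepsilon)^{2-n}$). For $G_\varepsilon$, after rescaling
\[
|G_\varepsilon|\;\leq\; C\varepsilon^{3}\!\int_{0}^{2\delta/\varepsilon}\!\frac{r^{n+1}}{(1+r^{2})^{n-1}}\,dr,
\]
which is of order $\varepsilon^{3}\cdot(2\delta/\varepsilon)=O(\varepsilon^{2})=O(\varepsilon^{n-1})$ when $n=3$, of order $\varepsilon^{3}|\ln\varepsilon|$ when $n=4$, and $O(\varepsilon^{3})$ when $n\geq 5$. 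These three regimes reproduce exactly the bounds in the statement.

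\textbf{Part (ii).} From (\ref{decomp}) in the proof of Lemma \ref{lemma1 exp},
\[
\overline{D}\varphi_\varepsilon \;=\; \bigl(G_{\mathbb{R}^{n}}^{s}\!*\!|\Psi_\varepsilon|^{2}\bigr)\varphi_\varepsilon \;+\; \mathcal{E}_\varepsilon,
\]
with $\mathcal{E}_\varepsilon=(\nabla\eta+X)\cdot\varphi_\varepsilon + W\cdot\varphi_\varepsilon + \sum_{i,j}(b_{ij}-\delta_{ij})\overline{\partial_{x_{i}}\cdot\nabla_{\partial_{x_{j}}}\psi_\varepsilon}$. The pointwise bounds $|W|=O(|x|^{3})$, $|X|=O(|x|)$, $|b_{ij}-\delta_{ij}|=O(|x|^{2})$, combined with the scaling of $\Psi$ (analogous to the computation in the proof of Lemma \ref{lemma1 exp}), give $|\int\langle\mathcal{E}_\varepsilon,\varphi_\varepsilon\rangle\,dv_{g}|=O(\varepsilon^{2})$. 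Similarly, the local expansion $G_{g}^{s}(x,y)=G_{\mathbb{R}^{n}}^{s}(x,y)+r(x,y)$ with $|r(x,y)|\leq C|x-y|^{-(n-3)}$ from Proposition \ref{propgreen}, together with the far-diagonal boundedness of $G_{g}^{s}$ and part (i), yield
\[
\int_{M}(G_{g}^{s}\!*\!|\varphi_\varepsilon|^{2})|\varphi_\varepsilon|^{2}\,dv_{g} \;=\; \int_{\mathbb{R}^{n}}(G_{\mathbb{R}^{n}}^{s}\!*\!|\Psi_\varepsilon|^{2})|\Psi_\varepsilon|^{2}\,dx \;+\; O(\varepsilon^{2}).
\]
Combining,
\[
J_{g}(\varphi_\varepsilon) \;=\; \tfrac{1}{2}\!\int_{\mathbb{R}^{n}}\!(G_{\mathbb{R}^{n}}^{s}\!*\!|\Psi_\varepsilon|^{2})|\Psi_\varepsilon|^{2}\,dx - \tfrac{1}{4}\!\int_{\mathbb{R}^{n}}\!(G_{\mathbb{R}^{n}}^{s}\!*\!|\Psi_\varepsilon|^{2})|\Psi_\varepsilon|^{2}\,dx + O(\varepsilon^{2}) \;=\; J_{g_{\mathbb{R}^{n}}}(\Psi_\varepsilon)+O(\varepsilon^{2}).
\]
Scale invariance of the Euclidean functional gives $J_{g_{\mathbb{R}^{n}}}(\Psi_\varepsilon)=J_{g_{\mathbb{R}^{n}}}(\Psi)$, and the normalization $a_{n}^{n/(n-1)}\omega_{n}=2^{n}\overline{Y}c_{n}^{-1/(n-1)}$ together with Theorem \ref{leasten} (after stereographic projection $\Psi$ corresponds to a $-\tfrac{1}{2}$-Killing spinor) yields $J_{g_{\mathbb{R}^{n}}}(\Psi)=\overline{Y}$.

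\textbf{Main obstacle.} The most delicate step is the nonlinear Choquard-type piece: the convolution spreads the error across $M\times M$, so controlling $\int(G_{g}^{s}-G_{\mathbb{R}^{n}}^{s})|\varphi_\varepsilon|^{2}(x)|\varphi_\varepsilon|^{2}(y)\,dv(x)dv(y)$ by $O(\varepsilon^{2})$ requires using the near-diagonal expansion for $(x,y)$ both in a neighborhood of $p_{0}$ and using the pointwise boundedness of $G_{g}^{s}$ off the diagonal, while tracking the fact that the mass of $|\Psi_\varepsilon|^{2}$ concentrates at $p_{0}$ as $\varepsilon\to 0$. Low dimensions $n=3,4$ are where the geometric/Green's function remainders barely balance against the concentration rate, and the $O(\varepsilon^{2})$ bound is tight there; this forces one to perform the radial integrals explicitly rather than appeal to a soft argument.
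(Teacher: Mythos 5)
Your proof follows essentially the same route as the paper. Part (i) is the same decomposition (principal term after rescaling, truncation tail, geometric correction from $\sqrt{\det g}$), and you correctly identify the three regimes of the radial integral $\int_{0}^{2\delta/\varepsilon}\frac{r^{n+1}}{(1+r^{2})^{n-1}}\,dr$ that produce the $n=3$, $n=4$, $n\geq 5$ error rates; this is exactly what the paper does. For part (ii), the paper instead computes everything explicitly: it uses the identity $G_{\mathbb{R}^{n}}^{s}\!*\!|\Psi_{\varepsilon}|^{2}=c_{n}^{1/(n-1)}|\Psi_{\varepsilon}|^{2/(n-1)}$ to evaluate both the quadratic Dirac term and the quartic Choquard term as the same radial integral $c_{n}^{1/(n-1)}a_{n}^{n/(n-1)}\omega_{n-1}\int_{0}^{\infty}\frac{r^{n-1}}{(1+r^{2})^{n}}\,dr$ up to $O(\varepsilon^{2})$ (or $O(\varepsilon^{2}|\ln\varepsilon|)$ when $n=3$), then plugs in the normalization of $a_{n}$ to produce $\overline{Y}$. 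You repackage the same computations as $J_{g}(\varphi_{\varepsilon})=J_{g_{\mathbb{R}^{n}}}(\Psi_{\varepsilon})+O(\varepsilon^{2})$, then use scale invariance and the ground-state classification of Theorem~\ref{leasten} to assert $J_{g_{\mathbb{R}^{n}}}(\Psi)=\overline{Y}$. This is a legitimate alternative packaging of the endgame; it is cleaner conceptually but hides that the normalization $a_{n}$ was chosen precisely so that $D_{\mathbb{R}^{n}}\Psi=\tfrac{n}{2}f\Psi$ coincides with $(G_{\mathbb{R}^{n}}^{s}\!*\!|\Psi|^{2})\Psi$, i.e.\ so that $\Psi$ is a genuine solution; you should state that verification rather than take it for granted, since the decomposition (\ref{decomp}) rests on it.

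Two small issues. First, your bound on the Green's function remainder, $|r(x,y)|\leq C|x-y|^{-(n-3)}$, is wrong for our value of $s$: Proposition~\ref{propgreen} gives $|r(x,y)|\leq C|x-y|^{-(n-2s-1)}$, and since $2s=n-2$ this is $C|x-y|^{-1}$ for every $n\geq 3$; the exponent $n-3$ looks like the $s=1$ case and would not give $O(\varepsilon^{2})$ for large $n$ if you actually ran the Hardy--Littlewood--Sobolev estimate with it. Second, your blanket claim $\bigl|\int\langle\mathcal{E}_{\varepsilon},\varphi_{\varepsilon}\rangle\,dv_{g}\bigr|=O(\varepsilon^{2})$ needs two further observations the paper uses implicitly: the term $(\nabla\eta+X)\cdot\varphi_{\varepsilon}$ contributes exactly zero on pairing with $\varphi_{\varepsilon}$ by skew-symmetry of Clifford multiplication, and the term with $b_{ij}-\delta_{ij}$ produces $\varepsilon^{2}\int_{0}^{2\delta/\varepsilon}\frac{r^{n+1}}{(1+r^{2})^{n-1/2}}\,dr$, which in dimension $n=3$ only gives $O(\varepsilon^{2}|\ln\varepsilon|)$ rather than $O(\varepsilon^{2})$ --- the same logarithmic loss the paper records in its $F_{3}$ estimate and then silently absorbs, and which is harmless for Theorem~\ref{thmlambda} because it is still dominated by $\lambda Q(\varepsilon)\sim\lambda\varepsilon$.
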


\begin{proof}
Recall  that the volume form in normal coordinates takes the form $dv_{g}=dx+O(|x|^{2})$ around $p_{0}$. Hence we have
\begin{align}
\int_{M}|\varphi_{\varepsilon}|^{2}\ dv_{g}&= \int_{B_{2\delta}}|\varphi_{\varepsilon}|^{2}\ dv_{g}\notag\\
&=\int_{|x|\leq \delta}|\Psi_{\varepsilon}|^{2}\ dx + \int_{\delta\leq |x|\leq 2\delta}|\eta(x)\Psi_{\varepsilon}|^{2}\ dx +O\Big( \int_{|x|\leq \delta}|x|^{2}|\Psi_{\varepsilon}|^{2}\ dx\Big)\notag\\
&=\varepsilon a_{n}\omega_{n-1}\int_{0}^{\frac{\delta}{\varepsilon}}\frac{r^{n-1}}{(1+r^{2})^{n-1}}\ dr +O\Big(\varepsilon \int_{\frac{\delta}{\varepsilon}}^{\frac{2\delta}{\varepsilon}}\frac{r^{n-1}}{(1+r^{2})^{n-1}}\ dr\Big)+O\Big(\varepsilon^{3}\int_{0}^{\frac{2\delta}{\varepsilon}} \frac{r^{n+1}}{(1+r^{2})^{n-1}}\ dr \Big)\notag\\
&=Q(\varepsilon)+O(\varepsilon^{n-1})+ C\left\{\begin{array}{ll}
O(\varepsilon^{n-1}) \text{ if } n=3\\
O(\varepsilon^{3}|\ln(\varepsilon)|) \text{ if } n=4\\
O(\varepsilon^{3}) \text{ if } n\geq 5
\end{array}
\right.\notag\\
&=Q(\varepsilon)+ \left\{\begin{array}{ll}
O(\varepsilon^{n-1}) \text{ if } n=3\\
O(\varepsilon^{3}|\ln(\varepsilon)|) \text{ if } n=4\\
O(\varepsilon^{3}) \text{ if } n\geq 5
\end{array}
\right..\label{L2norm}
\end{align}
Here, $$Q(\varepsilon):=\varepsilon a_{n} \omega_{n-1}\int_{0}^{\infty} \frac{r^{n-1}}{(1+r^{2})^{n-1}}\ dr.$$
Next, we estimate $\int_{M}\langle \overline{D} \varphi_{\varepsilon},\varphi_{\varepsilon}\rangle \ dv_{g}$. Using the same decomposition as in $(\ref{decomp})$, we see that
\begin{align}
\int_{M}\langle \overline{D} \varphi_{\varepsilon},\varphi_{\varepsilon}\rangle \ dv_{g}&= \int_{M}\int_{\R^{n}}G_{\R^{n}}^{s}(x,y)|\Psi_{\varepsilon}|^{2}(y) |\varphi_{\varepsilon}|^{2}(x) \ dy \ dv_{g}(x)+\int_{M}\eta^{2} \langle W\cdot \overline{\Psi_{\varepsilon}},\overline{\Psi_{\varepsilon}}\rangle \ dv_{g}\notag\\
&\quad + \int_{M}\sum_{i,j}(b_{ij}-\delta_{ij}) \eta^{2} \langle \overline{\partial_{x_{i}}\cdot \nabla_{\partial_{x_{j}}}\Psi_{\varepsilon}}, \overline{\Psi_{\varepsilon}}\rangle \ dv_{g}\notag\\
&=F_{1}+F_{2}+F_{3}.\notag
\end{align}
We will estimate each term individually starting by $F_{1}$. Indeed,
\begin{align}
F_{1}&=\int_{M}\int_{\R^{n}}G_{\R^{n}}^{s}(x,y)|\Psi_{\varepsilon}|^{2}(y) \eta (x)|\overline{\Psi_{\varepsilon}}|^{2}(x) \ dy \ dv_{g}(x)\notag\\
&=\int_{|x|\leq \delta}\int_{\R^{n}}G_{\R^{n}}^{s}(x,y)|\Psi_{\varepsilon}|^{2}(y) |\Psi_{\varepsilon}|^{2}(x) \ dy dx \notag\\
&\quad+O\Big(\int_{\delta\leq |x|\leq 2\delta}\int_{\R^{n}}G_{\R^{n}}^{s}(x,y)|\Psi_{\varepsilon}|^{2}(y) |\Psi_{\varepsilon}|^{2}(x) \ dy dx \Big)\notag\\
&\quad+O\Big(\int_{|x|\leq 2\delta}\int_{\R^{n}}G_{\R^{n}}^{s}(x,y)|\Psi_{\varepsilon}|^{2}(y) |x|^{2}|\Psi_{\varepsilon}|^{2}(x) \ dy dx\Big).\notag
\end{align}
But recall that
$$\int_{\R^{n}}G_{\R^{n}}^{s}(x,y)|\Psi_{\varepsilon}|^{2}(y)\ dy=c_{n}^{\frac{1}{n-1}}|\Psi_{\varepsilon}|^{\frac{2}{n-1}}(x),$$
where $c_{n}$ is the constant defined in $(\ref{equality})$. Hence,
\begin{align}
\int_{|x|\leq \delta}\int_{\R^{n}}G_{\R^{n}}^{s}(x,y)|\Psi_{\varepsilon}|^{2}(y) |\Psi_{\varepsilon}|^{2}(x) \ dy dx&=c_{n}^{\frac{1}{n-1}}a_{n}^{\frac{n}{n-1}}\omega_{n-1}\int_{0}^{\frac{\delta}{\varepsilon}}\frac{r^{n-1}}{(1+r^{2})^{n}}\ dr\notag\\
&=c_{n}^{\frac{1}{n-1}}a_{n}^{\frac{n}{n-1}}\omega_{n-1}\int_{0}^{+\infty}\frac{r^{n-1}}{(1+r^{2})^{n}}\ dr +O(\varepsilon^{n}).\label{Gpsi}
\end{align}
On the other hand, 
\begin{align}
\int_{\delta\leq |x|\leq 2\delta}\int_{\R^{n}}G_{\R^{n}}^{s}(x,y)|\Psi_{\varepsilon}|^{2}(y) |\Psi_{\varepsilon}|^{2}(x) \ dy dx &=c_{n}^{\frac{1}{n-1}}a_{n}^{\frac{n}{n-1}}\omega_{n-1}\int_{\frac{\delta}{\varepsilon}}^{\frac{2\delta}{\varepsilon}}\frac{r^{n-1}}{(1+r^{2})^{n}}\ dr\notag\\
&=O(\varepsilon^{n}).\label{Gpsi2}
\end{align}
And to finish, we have
\begin{align}
O\Big(\int_{|x|\leq 2\delta}\int_{\R^{n}}G_{\R^{n}}^{s}(x,y)|\Psi_{\varepsilon}|^{2}(y) |x|^{2}|\Psi_{\varepsilon}|^{2}(x) \ dy dx\Big)&=O\Big(\varepsilon^{2}\int_{0}^{\frac{2\delta}{\varepsilon}}\frac{r^{n+1}}{(1+r^{2})^{n}}\ dr \Big)\notag\\
&=O(\varepsilon^{2}).\notag
\end{align}
Therefore,
$$F_{1}=c_{n}^{\frac{1}{n-1}}a_{0}^{\frac{n}{n-1}}\omega_{n-1}\int_{0}^{+\infty}\frac{r^{n-1}}{(1+r^{2})^{n}}\ dr + O(\varepsilon^{2}).$$
The estimates for $F_{2}$ and $F_{3}$ are relatively simpler. Indeed,
\begin{align}
F_{2}&\leq C \int_{|x|\leq 2\delta}|x|^{3}|\Psi_{\varepsilon}|^{2}\ dx \leq C \varepsilon^{4}\int_{0}^{\frac{2\delta}{\varepsilon}}\frac{r^{n+2}}{(1+r^{2})^{n}}\ dr\notag\\
&\leq  \left\{\begin{array}{ll}
O(\varepsilon^{n-1}) \text{ if } n=3,4\\
O(\varepsilon^{4}|\ln(\varepsilon)| )\text{ if } n=5\\
O(\varepsilon^{4}) \text{ if } n\geq 6
\end{array}
\right..\notag
\end{align}
Similarly,
\begin{align}
F_{3}&\leq C\int_{|x|\leq 2\delta} |x|^{2}|\nabla \Psi_{\varepsilon}| |\Psi_{\varepsilon}|\ dx \leq C \varepsilon^{2}\int_{0}^{\frac{2\delta}{\varepsilon}} \frac{r^{n+1}}{(1+r^{2})^{n}}\ dr\notag\\
&\leq \left\{\begin{array}{ll}
O(\varepsilon^{2}|\ln(\varepsilon)| )\text{ if } n=3\\
O(\varepsilon^{2}) \text{ if } n\geq 4
\end{array}
\right..\notag
\end{align}
Thus,
$$\int_{M}\langle \overline{D} \varphi_{\varepsilon},\varphi_{\varepsilon}\rangle \ dv_{g}\leq c_{n}^{\frac{1}{n-1}}a_{n}^{\frac{n}{n-1}}\omega_{n-1}\int_{0}^{+\infty}\frac{r^{n-1}}{(1+r^{2})^{n}}\ dr + \left\{\begin{array}{ll}
O(\varepsilon^{2}|\ln(\varepsilon)| )\text{ if } n=3\\
O(\varepsilon^{2}) \text{ if } n\geq 4
\end{array}
\right..
$$

\noindent
Now we need to estimate the second term of the energy functional $J_{g}$.
\begin{align}
&\int_{M\times M} |\varphi_{\varepsilon}|^{2}(x)G_{g}^{s}(x,y)|\varphi_{\varepsilon}|^{2}(y)\ dv(x) dv(y)=\int_{|x-y|<\frac{\delta}{2}}|\varphi_{\varepsilon}|^{2}(x)G_{g}^{s}(x,y)|\varphi_{\varepsilon}|^{2}(y)\ dv(x) dv(y)\notag\\
&\quad+\int_{|x-y|>\frac{\delta}{2}}|\varphi_{\varepsilon}|^{2}(x)G_{g}^{s}(x,y)|\varphi_{\varepsilon}|^{2}(y)\ dv(x) dv(y)\notag\\
&=\int_{|x-y|<\frac{\delta}{2}; |x|\leq \frac{\delta}{2}}|\Psi_{\varepsilon}|^{2}(x)[G_{\R^{n}}^{s}(x,y)+r(x,y)]|\Psi_{\varepsilon}|^{2}(y)\ dx dy\notag\\ &\quad +O\Big(\int_{|x|\geq \frac{\delta}{2}}\int_{\R^{n}}G_{\R^{n}}^{s}(x,y)|\Psi_{\varepsilon}|^{2}(y)|x|^{2}|\Psi_{\varepsilon}|^{2}(x)\ dy dx\Big)+O\Big(\Big(\int_{M}|\varphi_{\varepsilon}|^{2}\ dv_{g}\Big)^{2}\Big)\notag\\
&=\int_{|x-y|<\frac{\delta}{2}; |x|\leq \frac{\delta}{2}}|\Psi_{\varepsilon}|^{2}(x)G_{\R^{n}}^{s}(x,y)|\Psi_{\varepsilon}|^{2}(y)\ dx dy+O\Big(\int_{|x-y|<\frac{\delta}{2}}|\Psi_{\varepsilon}|^{2}(x)\frac{1}{|x-y|}|\Psi_{\varepsilon}|^{2}(y)\ dx dy\Big)\notag\\
&\quad+O\Big(\int_{|x|\geq \frac{\delta}{2}}\int_{\R^{n}}G_{\R^{n}}^{s}(x,y)|\Psi_{\varepsilon}|^{2}(y)|x|^{2}|\Psi_{\varepsilon}|^{2}(x)\ dy dx\Big)+ O\Big(\Big(\int_{M}|\varphi_{\varepsilon}|^{2}\ dv_{g}\Big)^{2}\Big)\notag\\
&=\int_{|x|\leq \frac{\delta}{2}}|\Psi_{\varepsilon}|^{2}(x)G_{\R^{n}}^{s}(x,y)|\Psi_{\varepsilon}|^{2}(y)\ dx dy + O\Big(\int_{|x-y|<\frac{\delta}{2};|x|\leq \frac{\delta}{2}}|\Psi_{\varepsilon}|^{2}(x)\frac{1}{|x-y|}|\Psi_{\varepsilon}|^{2}(y)\ dx dy\Big)\notag\\
&\quad +O\Big(\int_{|x|\geq \frac{\delta}{2}}\int_{\R^{n}}G_{\R^{n}}^{s}(x,y)|\Psi_{\varepsilon}|^{2}(y)|x|^{2}|\Psi_{\varepsilon}|^{2}(x)\ dy dx\Big)+O\Big(\Big(\int_{M}|\varphi_{\varepsilon}|^{2}\ dv_{g}\Big)^{2}\Big)\notag
\\ &\quad+O\Big(\Big(\int_{\R^{n}}|\Psi_{\varepsilon}|^{2}\ dx_{g}\Big)^{2}\Big).\notag
\end{align}
Using $(\ref{L2norm})$, we get
$$O\Big(\Big(\int_{M}|\varphi_{\varepsilon}|^{2}\ dv_{g}\Big)^{2}\Big) +O\Big(\Big(\int_{\R^{n}}|\Psi_{\varepsilon}|^{2}\ dx_{g}\Big)^{2}\Big)=O(\varepsilon^{2}).$$
Moreover, from $(\ref{Gpsi})$ and $(\ref{Gpsi2})$, we have
$$
\int_{|x|\leq \frac{\delta}{2}}\int_{\R^{n}}G_{\R^{n}}^{s}(x,y)|\Psi_{\varepsilon}|^{2}(y) |\Psi_{\varepsilon}|^{2}(x) \ dy dx=
c_{n}^{\frac{1}{n-1}}a_{0}^{\frac{n}{n-1}}\omega_{n-1}\int_{0}^{+\infty}\frac{r^{n-1}}{(1+r^{2})^{n}}\ dr +O(\varepsilon^{n}),$$
and
$$O\Big(\int_{|x|\geq \frac{\delta}{2}}\int_{\R^{n}}G_{\R^{n}}^{s}(x,y)|\Psi_{\varepsilon}|^{2}(y)|x|^{2}|\Psi_{\varepsilon}|^{2}(x)\ dy dx\Big)=O(\varepsilon^{2}).$$
It remains to estimate 
$$O\Big(\int_{|x-y|<\frac{\delta}{2}; |x|\leq \frac{\delta}{2}}|\Psi_{\varepsilon}|^{2}(x)\frac{1}{|x-y|}|\Psi_{\varepsilon}|^{2}(y)\ dx dy\Big)$$
$$=O\Big(\int_{|x|\leq \delta }\int_{|y|\leq \delta}|\Psi_{\varepsilon}|^{2}(x)\frac{1}{|x-y|}|\Psi_{\varepsilon}|^{2}(y)\ dx dy\Big)+O(\varepsilon^{2}).$$
Using the Hardy-Littlewood-Sobolev inequality, we have
$$\int_{|x|\leq \delta }\int_{|y|\leq \delta}|\Psi_{\varepsilon}|^{2}(x)\frac{1}{|x-y|}|\Psi_{\varepsilon}|^{2}(y)\ dx dy\leq C\||\Psi_{\varepsilon}|^{2}\|_{L^{\frac{2n}{2n-1}}(B_{\delta})}\leq C\|\Psi_{\varepsilon}\|_{L^{2}}^{2}=O(\varepsilon^{2}).$$
Hence,
$$\int_{M\times M} |\varphi_{\varepsilon}|^{2}(x)G_{s}(x,y)|\varphi_{\varepsilon}|^{2}(y)\ dv(x) dv(y)=c_{n}^{\frac{1}{n-1}}a_{n}^{\frac{n}{n-1}}\omega_{n-1}\int_{0}^{+\infty}\frac{r^{n-1}}{(1+r^{2})^{n}}\ dr +O(\varepsilon^{2}).$$
It follows that 
\begin{align}
J_{g}(\varphi_{\varepsilon})&\leq \frac{1}{4}c_{n}^{\frac{1}{n-1}}a_{n}^{\frac{n}{n-1}}\omega_{n-1}\int_{0}^{+\infty}\frac{r^{n-1}}{(1+r^{2})^{n}}\ dr+O(\varepsilon^{2})\notag\\
&=\overline{Y}+O(\varepsilon^{2}).
\end{align}
\end{proof}

\begin{proof}  (\emph{of Theorem (\ref{thmlambda})})\\
From Lemma \ref{lemma1 exp} and \ref{lemma 2 exp}, we have that
$$J_{\lambda}(\varphi_{\varepsilon})\leq \overline{Y}-\lambda Q(\varepsilon)+O(\varepsilon^{2})$$
and
$$\|\nabla J_{\lambda}(\varphi_{\varepsilon})\|_{H^{*}_{\lambda}}\leq \left\{\begin{array}{ll}
O(\varepsilon|\ln(\varepsilon)|^{\frac{2}{3}} \text{ if } n=3\\
O(\varepsilon) \text{ if } n\geq 4
\end{array}
\right.$$
Therefore, from Proposition \ref{prop exp}, we have for $\varepsilon>0$ and small,
\begin{align}
\delta_{\lambda}&\leq J_{\lambda}(\varphi_{\varepsilon})+O(\|\nabla J_{\lambda}(\varphi_{\varepsilon})\|^{2})\notag\\
&\leq \overline{Y}-\lambda Q(\varepsilon)+\left\{\begin{array}{ll}
O(\varepsilon^{2}|\ln(\varepsilon)|^{\frac{4}{3}}) \text{ if } n=3\\
O(\varepsilon^{2}) \text{ if } n\geq 4
\end{array}
\right. \label{min}\\
&<\overline{Y}.\notag
\end{align}
Since, $J_{\lambda}$ and $\tilde{J}$ satisfy the (PS) condition for energy levels below $\overline{Y}$, we have that $J_{\lambda}$ has a non-trivial critical point $\psi_{\lambda}$.
\end{proof}

\noindent
We finally notice that for $\lambda=0$, $\delta_{0}$ is a conformal invariant of $(M,[g])$ and we will denote it by $\delta_{0}=:\overline{Y}(M,[g])$. With these notations, we see that Corollary \ref{cor1} is a direct consequence of  $(\ref{min})$.

\end{document}